\documentclass[11pt,letterpaper]{amsart}
\usepackage{verbatim}
\usepackage{lscape}
\usepackage{tabularx}
\usepackage{fullpage}
\usepackage{appendix}
\usepackage{amsmath}

\usepackage{upgreek}
\usepackage{tikz-cd}
\usepackage{enumitem}
\usepackage{amsthm}
\usepackage{extarrows}
\usepackage{xypic}
\usepackage{stmaryrd}
\usepackage{graphicx}
\usepackage{float}
\usepackage{graphics,amssymb}
\usepackage{amsfonts}
\usepackage{latexsym}
\usepackage{epsf}
\usepackage{todonotes}
\usepackage{mathrsfs}
\usepackage{bbm}
\usepackage{hyperref}
\hypersetup{
    colorlinks=true,
    linkcolor=blue,
    urlcolor=blue, 
    citecolor= blue}
\usepackage[noabbrev, capitalize, nameinlink]{cleveref}
\usepackage{thmtools}
\usepackage{thm-restate}
\usepackage[framemethod=tikz]{mdframed}
\usepackage{ulem}
\usepackage{cancel}
\usepackage{tikz}
\usetikzlibrary{matrix,arrows}
\usepackage{mathtools}
\usepackage{url}

\usepackage{fancybox}
\usepackage{dsfont}
\usepackage{bbding}
\usepackage{setspace}
\usepackage{booktabs}
\usepackage{spectralsequences}
\usepackage{array}
\usetikzlibrary{fit}

\newtheorem{theoremA}{Theorem}

\crefname{theoremA}{Theorem}{Theorems}
\crefname{propositionA}{Proposition}{Propositions}

\newtheorem{propositionA}[theoremA]{Proposition}
\newtheorem{theorem}{Theorem}[section]
\newtheorem{lemma}[theorem]{Lemma}

\newtheorem{lem/def}[theorem]{Lemma/Definition}
\newtheorem{proposition}[theorem]{Proposition}
\newtheorem{corollary}[theorem]{Corollary}

\newtheorem*{theorem*}{Theorem}
\newtheorem*{corollary*}{Corollary}
\newtheorem*{proposition*}{Proposition}

\theoremstyle{definition}
\newtheorem{remark}[theorem]{Remark}
\newtheorem*{remark*}{Remark}
\newtheorem{definition}[theorem]{Definition}
\newtheorem*{definition*}{Definition}

\newcommand{\TT}{\mathbb{T}} 

\newcommand{\Gal}[2]{\mathrm{Gal}(#1/#2)}

\newcommand{\Z}{\mathbb{Z}}
\newcommand{\Q}{\mathbb{Q}}

\newcommand{\m}{\mathfrak{m}}

\newcommand{\pp}{\mathfrak{p}}
\newcommand{\F}{\mathbb{F}}

\newcommand{\lb}{\llbracket}
\newcommand{\rb}{\rrbracket}

\DeclareMathOperator{\ab}{\mathrm{ab}}

\DeclareMathOperator{\End}{\mathrm{End}}

\DeclareMathOperator{\Frob}{\mathrm{Frob}}
\DeclareMathOperator{\GL}{\mathrm{GL}}

\DeclareMathOperator{\im}{\mathrm{Im}}
\DeclareMathOperator{\Ind}{\mathrm{Ind}}

\DeclareMathOperator{\PGL}{\mathrm{PGL}}
\DeclareMathOperator{\prim}{\mathrm{prim}}

\DeclareMathOperator{\rk}{\mathrm{rk}}

\DeclareMathOperator{\SL}{SL}

\DeclareMathOperator{\tr}{\mathrm{tr}}
\DeclareMathOperator{\univ}{univ}

\newcommand{\II}{\mathfrak{I}}

\title{Modularity theorems for Eisenstein congruences in prime-power level}
\author{Jaclyn Lang, Katharina Müller, Bharathwaj Palvannan}
\allowdisplaybreaks

\begin{document}

\title{Modularity theorems for Eisenstein congruences in prime-power level}

\begin{abstract}
Let $p,N\geq 5$ be primes such that $N \equiv 1 \bmod p$. We prove modularity theorems at levels $N$ and $N^2$, showing that suitable Eisenstein localizations of the weight-$2$ $p$-adic Hecke algebra at these levels are isomorphic to certain natural quotients of a universal pseudodeformation ring. This universal ring parametrizes pseudorepresentations that are residually Eisenstein, unramified outside $Np$ and finite-flat at $p$, and satisfy appropriate conditions at $N$ depending on the level.
\end{abstract}





\maketitle
\section{Introduction}\label{sec:introduction}

Let $p,N \geq 5$ be primes such that $N \equiv 1 \bmod p$.  The question of studying the $\Z_p$-rank $r$ of Mazur's Eisenstein Hecke algebra in weight $2$ and prime level $\Gamma_0(N)$ has received considerable interest in literature \cite{Merel,CE2005,Lecouturier,WWE}.  When $r-1$ equals $1$ or $2$, it equals the order of vanishing of the mod-$p$ reduction of a zeta element that interpolates Dirichlet $L$-values at $-1$. A uniform proof of this fact was provided by Lecouturier \cite{Lecouturier} --- substantially expanding  on earlier work of Merel \cite{Merel} --- using the theory of higher Eisenstein elements.  A key motivation in Lecouturier's work is a consideration of suitable combinations of Eisenstein series over $\Gamma_1(N)$. More recently, the authors gave an independent proof of this same fact by studying Eisenstein--cuspidal congruences at level $\Gamma_0(N^2)$. This paper seeks to formalize the underlying connections between the two works by proving that the modularity theorems that quantify Eisenstein--cuspidal congruences at levels $\Gamma_1(N)$ and $\Gamma_0(N^2)$ are, in fact, equivalent.  Since modularity theorems at level $\Gamma_0(N)$ have  been successfully employed to study the rank of Mazur's Eisenstein Hecke algebra \cite{CE2005,WWE}, another important source of motivation is to seek their generalizations at prime-power levels.

As in \cite{WWE}, we consider modularity theorems via the pseudodeformation theory of the representation $\overline{D} \coloneqq \omega \oplus 1$, where $\omega$ is the mod-$p$ cyclotomic character. We consider four pseudodeformation rings, suggestively denoted $R_{\Gamma_1(N^2)}$, $R_{\Gamma_0(N^2)}$, $R_{\Gamma_1(N)}$ and $R_{\Gamma_0(N)}$. Each ring parametrizes those pseudodeformations that lift $\overline{D}$ with additional properties prescribed to match those of weight two modular forms at the indicated level.  In particular, the parametrized pseudodeformations are always unramified outside $Np$ and finite flat at $p$; the only differences are a possible determinant condition and a local condition at $N$.  We refer the reader to \cref{sec:DefRings} for their precise definitions. The deformation ring $R_{\Gamma_0(N)}$ was considered by Wake--Wang-Erickson \cite{WWE} in their study of the $\Z_p$-rank of Mazur's Eisenstein Hecke algebra at level $\Gamma_0(N)$ using Massey products in Galois cohomology.  On the Hecke side, we consider appropriate Eisenstein localizations of weight-$2$ Hecke algebras of levels $\Gamma_1(N^2)$, $\Gamma_0(N^2)$,  $\Gamma_1(N)$ and $\Gamma_0(N)$. We denote them by $\TT_{\Gamma_1(N^2)}$,  $\TT_{\Gamma_0(N^2)}$,  $\TT_{\Gamma_1(N)}$ and $\TT_{\Gamma_0(N)}$, respectively. For each $\Gamma \in \{\Gamma_1(N^2), \Gamma_0(N^2), \Gamma_1(N), \Gamma_0(N)\}$, there is a natural ring morphism $R_\Gamma \rightarrow \TT_\Gamma$.  Let $p^s$ denote the higher power of $p$ dividing $N-1$ and $r \coloneqq \rk_{\Z_p} \TT_{\Gamma_0(N)}$.  Our first main theorem is the following. 

\begin{theoremA}\label{thmA:modularity}
The following statements are equivalent:
\begin{enumerate}[label=(\roman*),leftmargin=*]
    \item\label{thmpart:Gamma0N2+rank} $R_{\Gamma_0(N^2)} \cong \TT_{\Gamma_0(N^2)}$ and $\rk_{\Z_p} \TT_{\Gamma_0(N^2)} = \bigl(\frac{p^s + 1}{2}\bigr)r$;
    \item\label{thmpart:Gamma1N+rank}   $R_{\Gamma_1(N)} \cong \TT_{\Gamma_1(N)}$  and $\rk_{\Z_p} \TT_{\Gamma_1(N)} = p^sr$.
    \end{enumerate}
\end{theoremA}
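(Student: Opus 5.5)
The plan is to compare both sides through a common intermediate object: the deformation ring $R_{\Gamma_1(N)}$ together with its natural $\Z_p[\Delta]$-algebra structure. Here $\Delta$ is the $p$-Sylow quotient of $(\Z/N\Z)^\times$, cyclic of order $p^s$. This structure comes from the determinant restricted to inertia at $N$; on the Hecke side it is given by the diamond operators. On the Galois side I expect two structural facts, proved from the definitions in \cref{sec:DefRings}:
\begin{enumerate}[label=(\alph*)]
\item $R_{\Gamma_0(N)} \cong R_{\Gamma_1(N)} \otimes_{\Z_p[\Delta]} \Z_p$, by trivializing the determinant at $N$;
\item $R_{\Gamma_0(N^2)}$ is identified with the quotient of $R_{\Gamma_1(N)}$, or of a twisted variant, by the ideal generated by $\delta - \delta^{-1}$ for $\delta \in \Delta$.
\end{enumerate}
Fact (b) is suggested by the heuristic that a level-$N^2$, trivial-character form which is Eisenstein mod $p$ is locally at $N$ a twist $\chi \otimes \sigma$ with $\chi$ of $p$-power order, so that $\det$ at inertia is $\chi^2$ up to symmetry $\chi \leftrightarrow \chi^{-1}$. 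Concretely, I would construct a ring $R'$ parametrizing pairs (a pseudodeformation of level $\Gamma_1(N)$, a square root of its nebentypus) and show that $R_{\Gamma_0(N^2)} \cong (R')^{\iota}$ or $R'/(\iota\text{-coinvariants})$, where $\iota$ is the involution $\chi \mapsto \chi^{-1}$.

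On the Hecke side I would establish the matching statements. First, $\TT_{\Gamma_1(N)}$ is free over $\Z_p[\Delta]$ of rank $r$. This should follow from a Hida-type control theorem or from Gorensteinness, using $\TT_{\Gamma_1(N)} \otimes_{\Z_p[\Delta]} \Z_p \cong \TT_{\Gamma_0(N)}$. Second, twisting by characters of $\Delta$ identifies the newforms in $S_2(\Gamma_0(N^2))$ congruent to Eisenstein with $\iota$-orbits of forms $f \otimes \chi$. Counting these over $\overline{\Q}_p$ gives the rank: the trivial character contributes $r$ forms from level $N$, and the nontrivial characters pair up to give $\frac{p^s-1}{2} r$ forms. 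This accounts for the factor $\frac{p^s+1}{2}$, and I would verify that the twisting compatibility gives commuting squares with the maps $R_\Gamma \to \TT_\Gamma$.

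With these in hand, both implications become formal algebra. For (ii)$\Rightarrow$(i): $R_{\Gamma_1(N)} \cong \TT_{\Gamma_1(N)}$ is equivariant for $\Delta$ and compatible with the twisting construction, so it induces $R_{\Gamma_0(N^2)} \cong \TT_{\Gamma_0(N^2)}$. The rank formula then follows because the operation (quotient or invariants) is applied to a free $\Z_p[\Delta]$-module of rank $r$. For (i)$\Rightarrow$(ii): I would use that $R_{\Gamma_0(N^2)} \to \TT_{\Gamma_0(N^2)}$ is an isomorphism and that the known rank is the maximal possible. To make this precise, bound $\rk R_{\Gamma_1(N)}$ above by $p^s r$ via a Nakayama and generator count over $\Z_p[\Delta]$. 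Then a surjection of a module generated by at most $p^s r$ elements onto a free module of rank $p^s r$ is an isomorphism, provided torsion-freeness is obtained. I would obtain that from the isomorphism at level $N^2$, by reconstructing $R_{\Gamma_1(N)}$ from $R_{\Gamma_0(N^2)}$ through the square-root structure.

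The main obstacle is fact (b): identifying the level-$N^2$ local condition at $N$ with a square-root or twist structure over the $\Gamma_1(N)$ ring, including the issue that $p$ is odd so squaring on $\Delta$ is bijective. I would first try to use that bijectivity to make $R' \cong R_{\Gamma_1(N)}$ canonically, reducing (b) to an explicit involution on $\Z_p[\Delta]$.
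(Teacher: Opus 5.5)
There is a genuine gap. It is centered on your fact (b), and (b) is false in the form you state it. Because $|\Delta|=p^s$ is odd, squaring is a bijection of $\Delta$, and $[\delta]-[\delta^{-1}]=[\delta^{-1}]([\delta^2]-1)$. So the ideal generated by these elements is all of $\II R_{\Gamma_1(N)}$. By your own fact (a), the quotient is then $R_{\Gamma_0(N)}$, of $\Z_p$-rank $r$, not $R_{\Gamma_0(N^2)}$, whose rank is $\frac{p^s+1}{2}r$ (granting (i), the rank you are trying to match). Reparametrizing $\Delta$, or taking ring-theoretic $\iota$-coinvariants, does not help, since that ideal contains the same elements.

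The only viable formulation is $R_{\Gamma_0(N^2)}\cong R_{\Gamma_1(N)}^{\iota}$ with $\iota(D)=D\otimes\det(D)^{-1}\varepsilon$, or equivalently $R_{\Gamma_1(N)}\cong R_{\Gamma_0(N^2)}\otimes_{\Z_p[\Delta]^+}\Z_p[\Delta]$. Your square-root device produces only one arrow, namely $R_{\Gamma_0(N^2)}\to R_{\Gamma_1(N)}$ via $D\mapsto D\otimes\kappa^{-1/2}$. For the inverse you must show that the universal $\Gamma_0(N^2)$-pseudodeformation restricted to $I_N$ becomes $\psi(\eta\oplus\eta^{-1})$ for a character $\eta$ of $\Delta$ after adjoining $\Z_p[\Delta]$. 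In other words, you need that the trivial-determinant, $\Frob_N$-stable inertia deformation ring is $\Z_p[\Delta]^+$. That is exactly \cref{prop:ZpDeltaPlusStructure}. It gives the $\Z_p[\Delta]^+$-algebra structure on $R_{\Gamma_0(N^2)}$ with $R_{\Gamma_0(N^2)}/\II^+R_{\Gamma_0(N^2)}\cong R_{\Gamma_0(N)}$ (\cref{prop:modaugideal}). Your proposal has no substitute for this, and without it (ii)$\Rightarrow$(i) does not go through.

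Second, you treat $\Z_p[\Delta]$-freeness of $\TT_{\Gamma_1(N)}$ as an available input, but nothing you cite proves it. Hida-type control concerns level at $p$, not diamond operators at $N$, and Gorensteinness alone does not give freeness over $\Z_p[\Delta]$. Moreover, by \cref{lem:Deltafreefromrank} this freeness is equivalent to the rank assertion in (ii), so in (i)$\Rightarrow$(ii) you would be assuming what you must prove. Your pairing count has the same defect, since it presupposes $r$ forms for each nebentypus.

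What holds unconditionally is $\rk_{\Z_p}\TT_{\Gamma_1(N)}=2\rk_{\Z_p}\TT_{\Gamma_0(N^2)}-r$ (\cref{cor:countingranksequivalence}). Its proof also needs that no form in $\mathcal{F}_{\Gamma_0(N^2)}$ is supercuspidal at $N$ (\cref{lem:EisnotSteinberg}), a point your heuristic skips. With this count, your own generator argument already finishes (i)$\Rightarrow$(ii), with no need for (b) or for torsion-freeness:
\begin{itemize}
\item $R_{\Gamma_1(N)}/\II R_{\Gamma_1(N)}\cong R_{\Gamma_0(N)}\cong\TT_{\Gamma_0(N)}$ gives surjections $\Z_p[\Delta]^r\twoheadrightarrow R_{\Gamma_1(N)}\twoheadrightarrow\TT_{\Gamma_1(N)}$.
\item The composite is a surjection between free $\Z_p$-modules of the same rank $p^sr$, so it and both factors are isomorphisms.
\end{itemize}
The same argument over $\Z_p[\Delta]^+$ gives (ii)$\Rightarrow$(i) once the $\Z_p[\Delta]^+$-structure is in place. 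This is essentially the paper's proof via \cref{lem:NAKReduction,lem:NAKpluscounting}. Each $R\cong\TT$ is deduced from its own rank statement together with Wake--Wang-Erickson's theorem at level $\Gamma_0(N)$, and the two rank statements are equivalent by counting; only the rank half of either hypothesis is ever used.
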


Moreover, the equivalent statements in \cref{thmA:modularity} are true, which we establish by proving \cref{thmA:modularity}\ref{thmpart:Gamma0N2+rank}. Namely, a recent freeness result of the first author with Pollack and Wake \cite{LangPollackWake} together with an application of Nakayama's lemma allows us to deduce \cref{thmA:modularity}\ref{thmpart:Gamma0N2+rank} from the modularity theorem of Wake--Wang-Erickson \cite{WWE} over $\Gamma_0(N)$.

\begin{theoremA}\label{thmA:R=TGamma0N2}
 \cref{thmA:modularity}\ref{thmpart:Gamma0N2+rank} holds.  That is, $R_{\Gamma_0(N^2)} \cong \TT_{\Gamma_0(N^2)}$  and  $\rk_{\Z_p} \TT_{\Gamma_0(N^2)} = \bigl(\frac{p^s + 1}{2}\bigr)r$.
\end{theoremA}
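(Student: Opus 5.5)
We plan to deduce the statement from the modularity theorem of Wake--Wang-Erickson at level $\Gamma_0(N)$, namely $R_{\Gamma_0(N)} \cong \TT_{\Gamma_0(N)}$ with both sides finite free of rank $r$ over $\Z_p$. We will combine it with the freeness result of Lang--Pollack--Wake and Nakayama's lemma. The basic structure is a commutative square
\[
\begin{tikzcd}
R_{\Gamma_0(N^2)} \arrow[r,two heads] \arrow[d,two heads] & \TT_{\Gamma_0(N^2)} \arrow[d,two heads]\\
R_{\Gamma_0(N)} \arrow[r,"\sim"] & \TT_{\Gamma_0(N)}.
\end{tikzcd}
\]
Here the left vertical map imposes the stronger local condition at $N$ (unipotent, or Steinberg-type, ramification instead of the tame condition allowed at level $N^2$). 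The right vertical map comes from old forms and the Eisenstein part of level $N$ embedding into level $N^2$. Surjectivity of $R_\Gamma\to\TT_\Gamma$ is standard, since $\TT_\Gamma$ is generated by traces of Frobenius, i.e.\ by the Hecke operators $T_\ell$ for $\ell\nmid Np$.

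\textbf{Step 1: the structure of $R_{\Gamma_0(N^2)}$ over an auxiliary ring.} The pseudorepresentations parametrized by $R_{\Gamma_0(N^2)}$ have inertia at $N$ acting through the tame quotient. Residually Eisenstein with $N\equiv 1 \bmod p$, this action should factor through the $p$-part $\Delta$ of $(\Z/N)^\times$, a cyclic group of order $p^s$. Restricting the pseudorepresentation to a generator $\gamma$ of tame inertia gives a structure of algebra over $\Lambda := \Z_p[\Delta]$, or rather over its invariants under $\gamma\mapsto\gamma^{-1}$, since $\Gamma_0$ has trivial determinant twist at $N$. The eigenvalues at $N$ are $\{\chi,\chi^{-1}\}$, so we work over $\Lambda^+ := \Z_p[\Delta]^{\langle\pm1\rangle}$, which is free of rank $(p^s+1)/2$ over $\Z_p$. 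Let $I$ be the augmentation ideal of $\Lambda^+$. We then show
\[
R_{\Gamma_0(N^2)}/I R_{\Gamma_0(N^2)} \cong R_{\Gamma_0(N)}.
\]
Killing $I$ forces inertia at $N$ to act unipotently, which is exactly the level-$N$ local condition used by Wake--Wang-Erickson.

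\textbf{Step 2: freeness on the Hecke side.} We invoke Lang--Pollack--Wake. We expect it to give that $\TT_{\Gamma_0(N^2)}$ (or the relevant space of modular forms or the Hecke module it acts on) is free over $\Lambda^+$, with $\TT_{\Gamma_0(N^2)}/I \cong \TT_{\Gamma_0(N)}$. This would give the rank count $\rk_{\Z_p}\TT_{\Gamma_0(N^2)} = \frac{p^s+1}{2}\,r$ directly.

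\textbf{Step 3: Nakayama.} From Steps 1 and 2 we get the chain
\[
R_{\Gamma_0(N^2)}/I \;\cong\; R_{\Gamma_0(N)} \;\cong\; \TT_{\Gamma_0(N)} \;\cong\; \TT_{\Gamma_0(N^2)}/I.
\]
Since $R_{\Gamma_0(N)}$ is finite over $\Z_p$, Nakayama's lemma makes $R_{\Gamma_0(N^2)}$ a finitely generated $\Lambda^+$-module; here $\Lambda^+$ is complete local with maximal ideal containing $I$. Choose lifts $r_1,\dots,r_m$ of a $\Z_p$-basis of $R_{\Gamma_0(N)}$, where $m=r$. These generate $R_{\Gamma_0(N^2)}$ over $\Lambda^+$. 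Their images in $\TT_{\Gamma_0(N^2)}$ form a $\Lambda^+$-basis, because $\TT_{\Gamma_0(N^2)}$ is free and the images reduce to a basis mod $I$. Hence the surjection $(\Lambda^+)^r \to R_{\Gamma_0(N^2)} \to \TT_{\Gamma_0(N^2)}$ is an isomorphism, and both maps are isomorphisms.

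\textbf{Main obstacle.} We expect the hardest step to be Step 1 together with the compatibility of $\Lambda^+$-structures. One must show that the pseudodeformation ring genuinely receives a map from $\Lambda^+$ rather than merely from $\Z_p[\Delta]$. One must also show that the local condition at $N$ defining $R_{\Gamma_0(N^2)}$ reduces modulo $I$ exactly to the Wake--Wang-Erickson condition, and that this $\Lambda^+$-action matches the diamond- or inertia-type action on $\TT_{\Gamma_0(N^2)}$ used in Lang--Pollack--Wake. We would first verify this via the characteristic polynomial of inertia for the associated pseudorepresentation, comparing with local Langlands for the principal series at $N$ that occur in level $N^2$.
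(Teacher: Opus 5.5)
Your proposal is correct and is essentially the paper's own argument. The paper makes $R_{\Gamma_0(N^2)}$ a $\Z_p[\Delta]^+$-algebra through the trivial-determinant inertia-at-$N$ pseudodeformation ring, where the relation $\tr(\tau)=\tr(\tau^N)$ coming from Frobenius conjugation cuts it down to $\Z_p[\Delta]^+$. It obtains $R_{\Gamma_0(N)} \cong R_{\Gamma_0(N^2)}/\II^+ \cong \TT_{\Gamma_0(N^2)}/\II^+ \cong \TT_{\Gamma_0(N)}$ from your commutative square and the Wake--Wang-Erickson isomorphism, and finishes with the Lang--Pollack--Wake freeness plus Nakayama; that freeness is for the action pushed through $\Phi_{\Gamma_0(N^2)}$, which they show agrees with the automorphic one. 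The only cosmetic difference is in the Nakayama step: the paper splits $R_{\Gamma_0(N^2)} \cong \TT_{\Gamma_0(N^2)} \oplus \ker\Phi_{\Gamma_0(N^2)}$ as $\Z_p[\Delta]^+$-modules and kills the compact kernel by topological Nakayama, instead of first showing $R_{\Gamma_0(N^2)}$ is finite over $\Z_p[\Delta]^+$ and lifting a basis as you do.
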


Combining \cref{thmA:R=TGamma0N2} with a freeness result for $\TT_{\Gamma_1(N^2)}$ over the $\Z_p$-algebra generated by the diamond operators, a further application of Nakayama's lemma  yields the following modularity theorem at level $\Gamma_1(N^2)$.

\begin{theoremA} \label{thmA:Gamma1N2+rank} 
 We have $R_{\Gamma_1(N^2)} \cong \TT_{\Gamma_1(N^2)}$  and $\rk_{\Z_p} \TT_{\Gamma_1(N^2)} = \bigl(\frac{p^s(p^s + 1)}{2}\bigr)r$.
\end{theoremA}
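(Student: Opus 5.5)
The plan is to deduce \cref{thmA:Gamma1N2+rank} from \cref{thmA:R=TGamma0N2}, using the group ring of diamond operators and Nakayama's lemma. The $p$-part of $(\Z/N^2\Z)^\times$ is cyclic of order $p^s$; call it $\Delta$ and put $\Lambda \coloneqq \Z_p[\Delta]$, a local ring with residue field $\F_p$. Only the $p$-part of the nebentypus survives Eisenstein localization, since the residual determinant $\omega$ forces the prime-to-$p$ part to be trivial. So $\TT_{\Gamma_1(N^2)}$ is a $\Lambda$-algebra through the diamond operators. The determinant of the universal pseudodeformation, restricted to inertia at $N$ and composed with the tame quotient, makes $R_{\Gamma_1(N^2)}$ a $\Lambda$-algebra in the same way. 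The map $R_{\Gamma_1(N^2)} \to \TT_{\Gamma_1(N^2)}$ is $\Lambda$-linear because $\det\rho_f(\Frob_\ell) = \ell\langle \ell\rangle$.

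The key steps, in order, are as follows.
\begin{enumerate}[label=(\arabic*)]
\item Show that $R_{\Gamma_1(N^2)} \otimes_\Lambda \Z_p \cong R_{\Gamma_0(N^2)}$, where $\Lambda \to \Z_p$ is the augmentation. Killing the augmentation ideal imposes determinant equal to the cyclotomic character, and I expect the local condition at $N$ defining $R_{\Gamma_1(N^2)}$ to specialize to the one defining $R_{\Gamma_0(N^2)}$. This should follow directly from the definitions in \cref{sec:DefRings}.
\item Use the freeness result announced in the introduction: $\TT_{\Gamma_1(N^2)}$ is free over $\Lambda$. Its augmentation quotient $\TT_{\Gamma_1(N^2)}/I_\Delta$ should map isomorphically onto $\TT_{\Gamma_0(N^2)}$. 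This is the standard control statement that trivial-character forms in a $\Delta$-family are exactly the $\Gamma_0(N^2)$ forms; freeness is what makes taking coinvariants commute with passing to the Hecke algebra.
\item Consider the commutative square formed by $R_{\Gamma_1(N^2)} \to \TT_{\Gamma_1(N^2)}$ and its reduction modulo $I_\Delta$. The surjectivity of $R \to \TT$ is standard, since $\TT$ is generated by the traces of Frobenius. Reducing modulo $I_\Delta$ gives $R_{\Gamma_0(N^2)} \to \TT_{\Gamma_0(N^2)}$, which is an isomorphism by \cref{thmA:R=TGamma0N2}.
\item Conclude. Let $K$ be the kernel of $R_{\Gamma_1(N^2)} \to \TT_{\Gamma_1(N^2)}$. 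Since $\TT_{\Gamma_1(N^2)}$ is $\Lambda$-free, $\mathrm{Tor}_1^\Lambda(\TT,\Z_p)=0$, so $K/I_\Delta K$ injects into the kernel of the reduced map, which is zero. Hence $K = I_\Delta K$. Nakayama's lemma over the local ring $\Lambda$ then gives $K=0$, provided $K$ is finitely generated over $\Lambda$.
\item Read off the rank: $\rk_{\Z_p}\TT_{\Gamma_1(N^2)} = p^s \cdot \rk_{\Z_p}\TT_{\Gamma_0(N^2)}$, which equals $p^s\bigl(\frac{p^s+1}{2}\bigr)r$ by \cref{thmA:R=TGamma0N2}.
\end{enumerate}

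The main obstacle is step (4). Applying Nakayama requires $R_{\Gamma_1(N^2)}$ to be a finitely generated $\Lambda$-module. I would get this by first applying topological Nakayama to $R_{\Gamma_1(N^2)}$ itself: its reduction $R_{\Gamma_1(N^2)}/\m_\Lambda R_{\Gamma_1(N^2)} \cong R_{\Gamma_0(N^2)}/p$ is finite, being a quotient of $\TT_{\Gamma_0(N^2)}/p$. The complete-local-ring version of Nakayama then makes $R_{\Gamma_1(N^2)}$ finite over $\Lambda$, and so $K$ is finitely generated.

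A secondary obstacle is establishing the freeness of $\TT_{\Gamma_1(N^2)}$ over $\Lambda$, if the paper does not prove it separately. I would try to derive it from the \cite{LangPollackWake} input at level $\Gamma_0(N^2)$, together with the fact that $\Delta$ acts freely on the Eisenstein part of $X_1(N^2)$ over $X_0(N^2)$, since the relevant cover is étale of $p$-power degree. The identification of local conditions in step (1) I expect to be routine.
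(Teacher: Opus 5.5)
\textbf{Gap: the freeness of $\TT_{\Gamma_1(N^2)}$ over $\Z_p[\Delta]$ is assumed, not proved.}

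Your steps (1), (3) and (4) are the paper's mechanism: \cref{lem:ZpDeltAlg}, \cref{prop:isomodI}, \cref{lem:NAKReduction}. The paper gets finite generation of the kernel from compactness and topological Nakayama; your route of first making $R_{\Gamma_1(N^2)}$ finite over $\Lambda$ also works. The problem is step (2). Freeness of $\TT_{\Gamma_1(N^2)}$ over $\Z_p[\Delta]$ is not an external input. The introduction announces it, but it is established inside the proof of this theorem, and it carries essentially all of the new content. Your step (5) derives the rank formula from freeness, so both conclusions of the theorem rest on an unproved claim. Your fallback does not close this. The intermediate $\Delta$-cover of $X_0(N^2)$ is indeed étale, since the ramification indices $N$, $2$, $3$ are prime to $p$. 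But a free $\Delta$-action on the curve does not by itself give freeness of the Eisenstein-localized Hecke algebra. You would still have to pass through the localization, compare character pieces, and then run a Nakayama argument anyway.

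The missing idea is to reverse the order: count ranks first, then deduce freeness. Twisting gives a bijection
\[
\mathcal{F}_{\Gamma_0(N^2)}\times\hat\Delta\to\mathcal{F}_{\Gamma_1(N^2)},\qquad (f,\chi)\mapsto f\otimes\chi,
\]
with inverse $g\mapsto(g\otimes\chi_g^{-1/2},\chi_g^{1/2})$. This uses three facts:
\begin{enumerate}
\item every Nebentypus in the Eisenstein localization is a character of $\Delta$;
\item square roots in $\hat\Delta$ are unique because $p$ is odd;
\item twisting by a character of conductor $N$ preserves level $N^2$ when the Nebentypus has conductor dividing $N$.
\end{enumerate}
Counting eigensystems over $\overline{\Q}_p$ gives $\rk_{\Z_p}\TT_{\Gamma_1(N^2)}=p^s\rk_{\Z_p}\TT_{\Gamma_0(N^2)}$ (\cref{prop:countingranksN2}).

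Separately, the control statement needs no freeness, contrary to your remark in step (2). In the chain of surjections $R_{\Gamma_0(N^2)}\cong R_{\Gamma_1(N^2)}/\II R_{\Gamma_1(N^2)}\twoheadrightarrow\TT_{\Gamma_1(N^2)}/\II\TT_{\Gamma_1(N^2)}\twoheadrightarrow\TT_{\Gamma_0(N^2)}$, the composite is the isomorphism of \cref{thmA:R=TGamma0N2}. Hence $\TT_{\Gamma_1(N^2)}/\II\TT_{\Gamma_1(N^2)}\cong\TT_{\Gamma_0(N^2)}$ is $\Z_p$-free of rank $m=\frac{p^s+1}{2}r$.

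Nakayama then gives a surjection $\Z_p[\Delta]^m\twoheadrightarrow\TT_{\Gamma_1(N^2)}$. Both sides are $\Z_p$-free of rank $p^s m$, so it is an isomorphism (\cref{lem:NAKpluscounting}). With freeness established this way, your steps (3)--(4) finish the proof, and the rank formula comes from the twisting count rather than from freeness.
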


The crucial input to our proof of \cref{thmA:R=TGamma0N2} --- the freeness result of \cite{LangPollackWake}  --- is established via techniques that are available beyond ${\GL_2}_{/\Q}$. Let $\Delta$ be the maximal $p$-quotient of $(\Z/N\Z)^\times$, and let $\Z_p[\Delta]^+$ be subring of $\Z_p[\Delta]$ that is fixed by the involution that inverts group-like elements. The ring $\Z_p[\Delta]^+$ acts as a natural substitute for the ring generated by diamond operators which are absent in level $\Gamma_0(N^2)$. The action of the ring $\Z_p[\Delta]^+$ on $\TT_{\Gamma_0(N^2)}$ arises naturally in two different ways. First, it arises as an appropriate inertia-at-$N$ pseudodeformation ring, making $R_{\Gamma_0(N^2)}$, and hence $\TT_{\Gamma_0(N^2)}$, an algebra over it. This action of $\Z_p[\Delta]^+$ on $\TT_{\Gamma_0(N^2)}$ is referred as the \textit{Galois action}.  The ring  $\Z_p[\Delta]^+$ also arises as the endomorphism algebra of a projective indecomposable representation of $\Z_p[\PGL_2(\F_N)]$, which leads to a second action of $\Z_p[\Delta]^+$ on $\TT_{\Gamma_0(N^2)}$, called the \textit{automorphic action}. Using the modular representation theory of $\PGL_2(\F_N)$,  the authors of \cite{LangPollackWake} show that, when equipped with the automorphic action, $\TT_{\Gamma_0(N^2)}$ is free over $\Z_p[\Delta]^+$.  That the Galois and automorphic actions agree is established in \cite{LangPollackWake} as an instance of local Langlands in families.  

One can also give a direct proof of \cref{thmA:modularity}\ref{thmpart:Gamma1N+rank}. The strategy for its proof follows the deduction of \cref{thmA:Gamma1N2+rank} from \cref{thmA:R=TGamma0N2} using Nakayama's lemma.  As before the crucial tool is a freeness result. The ring $\TT_{\Gamma_1(N)}$ turns out to be free of the appropriate rank over the $\Z_p[\Delta]$ -- the $\Z_p$-algebra generated by the diamond operators. This fact was first observed in an unpublished work of Lecouturier--Wake--Wang-Erickson; see \cref{rem:LWWE}. 

The crux of the equivalence in \cref{thmA:modularity} is based on the fact that forms in level $\Gamma_0(N^2)$ and $\Gamma_1(N)$ are related by twisting by $p$-power Dirichlet characters. Twisting arguments can be leveraged to show that the crucial freeness results at levels $\Gamma_0(N^2)$ and $\Gamma_1(N)$, which are input to the corresponding modularity theorems, are equivalent. 
\begin{propositionA}\label{prop:freenessequiv}
    The following statements are equivalent:
    \begin{enumerate}[leftmargin=*]
    \item The $\Z_p[\Delta]^+$-module $\TT_{\Gamma_0(N^2)}$ is free of rank $r$.
    \item The $\Z_p[\Delta]$-module $\TT_{\Gamma_1(N)}$ is free of rank $r$.
    \end{enumerate}
\end{propositionA}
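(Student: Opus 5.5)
The plan is to realize both Hecke algebras as specializations of one object and compare them through a ``universal twist''. Write $\Lambda \coloneqq \Z_p[\Delta]$ and $\Lambda^+ \coloneqq \Z_p[\Delta]^+$. Both rings are local, since $\Delta$ is a $p$-group. Because $p$ is odd, squaring $g \mapsto g^2$ is an automorphism of $\Delta$, so every character $\chi$ of $\Delta$ has a unique square root $\psi$.

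\textbf{Step 1: the twisting map.} Twisting a form of level $\Gamma_1(N)$ and nebentypus $\chi$ by $\psi^{-1}$, where $\psi^2=\chi$, produces a form of trivial character at level $\Gamma_0(N^2)$. Its inertia-at-$N$ pseudocharacter is $\psi \oplus \psi^{-1}$. Done universally, this gives a ring map $\iota\colon \Lambda^+ \to \Lambda$, namely $[g]+[g^{-1}] \mapsto [g^{1/2}]+[g^{-1/2}]$, composed with the natural inclusion. I would then prove a twisting isomorphism
$$\TT_{\Gamma_0(N^2)} \otimes_{\Lambda^+,\iota} \Lambda \;\cong\; \TT_{\Gamma_1(N)} \otimes_{\Lambda} \Lambda',$$
where $\Lambda'$ is $\Lambda$ or a harmless modification of it. This should be checked first on Galois pseudorepresentations: twisting by the universal character $\Delta \to \Lambda^\times$, $g \mapsto [g^{-1/2}]$, composed with the cyclotomic projection. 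It is then transported to the Hecke side by comparing Hecke eigensystems over $\overline{\Q}_p$: for each character $\chi$, the $\chi$-specialization of $\TT_{\Gamma_1(N)}$ should match the $\{\psi,\psi^{-1}\}$-specialization of $\TT_{\Gamma_0(N^2)}$. Here one uses that the Galois action of $\Lambda^+$ on $\TT_{\Gamma_0(N^2)}$, through inertia at $N$, is the relevant one; the agreement with the automorphic action is quoted from \cite{LangPollackWake}.

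\textbf{Step 2: reduction to rank and fiber computations.} For a finitely generated, $\Z_p$-torsion-free module $M$ over a reduced local ring $A$ that is finite flat over $\Z_p$, I would use the following criterion. $M$ is free of rank $r$ if and only if $\dim_{\F_p} M/\m_A M = r$ and $\rk_{\Z_p} M = r\cdot \rk_{\Z_p} A$. Indeed, a surjection $A^r \to M$ then has torsion-free kernel of rank $0$, so the kernel vanishes. It therefore suffices to show two things:
\begin{itemize}
\item[(a)] the special fibers $\TT_{\Gamma_0(N^2)}/\m_{\Lambda^+}$ and $\TT_{\Gamma_1(N)}/\m_\Lambda$ have the same $\F_p$-dimension;
\item[(b)] $\rk_{\Z_p}\TT_{\Gamma_0(N^2)} = \tfrac{p^s+1}{2}r$ holds if and only if $\rk_{\Z_p}\TT_{\Gamma_1(N)} = p^s r$.
\end{itemize}
Part (b) follows by counting eigensystems character by character using Step 1. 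A nontrivial pair $\{\psi,\psi^{-1}\}$ corresponds to the two nebentypes $\psi^{\pm 2}$, which have Galois-conjugate newform sets of equal size. The trivial character on both sides gives $\TT_{\Gamma_0(N)}$, of rank $r$; on the level-$N^2$ side this is the part with unipotent inertia at $N$, and one must check that no extra level-$N^2$ oldforms survive the Eisenstein localization. Part (a) follows by reducing the isomorphism of Step 1 modulo maximal ideals. Under $\iota$ the maximal ideal of $\Lambda^+$ generates an $\m_\Lambda$-primary ideal, and in fact $\m_{\Lambda^+}\Lambda$ and $\m_\Lambda$ have the same radical.

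\textbf{Main obstacle.} The hard step is the Hecke-side integral twisting isomorphism in Step 1, and especially its behaviour modulo $\m$. Twisting identifies eigensystems over $\overline{\Q}_p$, but it need not obviously give an integral isomorphism of localized Hecke algebras. Moreover, $\Lambda$ need not be free over $\iota(\Lambda^+)$, so the fiber dimensions in (a) could a priori differ by the length of $\Lambda/\m_{\Lambda^+}\Lambda$. If this fails, I would first avoid integral Hecke isomorphisms altogether. Instead I would compute both special fibers via the pseudodeformation rings at $\Gamma_0(N)$, using the established $R=\TT$ at level $\Gamma_0(N)$ from \cite{WWE}, and verify directly that each special fiber equals $\TT_{\Gamma_0(N)}/p$ of dimension $r$.
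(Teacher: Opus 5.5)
\textbf{The gap is Step 1, and your part (a) rests on it.}

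Comparing Hecke eigensystems over $\overline{\Q}_p$ can at best identify $\TT_{\Gamma_0(N^2)}\otimes_{\Lambda^+,\iota}\Lambda$ with $\TT_{\Gamma_1(N)}$ after inverting $p$. It gives no control of the integral structure, and the integral structure is the whole point here. $\Lambda$ is not flat over $\Lambda^+$; it is not even free, since $\frac{p^s+1}{2}\nmid p^s$. So the left-hand side need not be $\Z_p$-torsion-free, and it cannot be read off from eigenforms.

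On the deformation side your twist does work. Twisting by $\kappa^{-1/2}$ and $\kappa^{1/2}$ gives mutually inverse maps, so $R_{\Gamma_1(N)}\cong R_{\Gamma_0(N^2)}\otimes_{\Lambda^+,\iota}\Lambda$. But transporting this to the Hecke algebras needs $R_\Gamma\cong\TT_\Gamma$ for $\Gamma=\Gamma_0(N^2)$ and $\Gamma_1(N)$. In this paper those isomorphisms are deduced from exactly the freeness statements whose equivalence you are proving, so that route is circular.

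Separately, your worry about the length of $\Lambda/\m_{\Lambda^+}\Lambda$ is unfounded. Since $\Lambda/\m_\Lambda=\F_p=\Lambda^+/\m_{\Lambda^+}$, base change along $\iota$ preserves special fibers exactly.

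\textbf{The missing idea is to carry out your fallback.} It makes Step 1 unnecessary, and it is the paper's proof.

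On the deformation side, $R_{\Gamma_1(N)}/\II\cong R_{\Gamma_0(N)}\cong R_{\Gamma_0(N^2)}/\II^+$ (\cref{lem:ZpDeltAlg}, \cref{prop:ZpDeltaPlusStructure}). Killing $\II$ forces $\det=\varepsilon$, hence $\chi|_{I_N}=1$. Killing $\II^+$ forces $D|_{I_N}$ to be pseudo-trivial.

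Next, consider the chain
$R_{\Gamma_0(N)}\cong R_\Gamma/\II R_\Gamma\twoheadrightarrow\TT_\Gamma/\II\TT_\Gamma\twoheadrightarrow\TT_{\Gamma_0(N)}$.
Its composite is $\Phi_{\Gamma_0(N)}$, an isomorphism by \cite{WWE}. So both augmentation quotients are $\TT_{\Gamma_0(N)}$, which is $\Z_p$-free of rank $r$ (\cref{prop:isomodI}, \cref{prop:modaugideal}). For level $\Gamma_0(N^2)$, the last surjection is the non-obvious map of \cref{prop:mapfromGamma0N2toGamma0N}. It kills $\II^+$ because level-$N$ eigenforms are Steinberg or unramified at $N$.

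Combine this with your Step 2 criterion (or \cref{lem:NAKpluscounting}) and with (b), which is the paper's count \cref{cor:countingranksequivalence}. For (b) you also need \cref{lem:EisnotSteinberg}, to know that every level-$N^2$ newform in $\mathcal{F}_{\Gamma_0(N^2)}$ is ramified principal series and hence a twist of a $\Gamma_1(N)$-form. The equivalence then follows.

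In fact, given the rank identity, your criterion only needs the upper bound $\dim_{\F_p}\TT_\Gamma/\m_A\TT_\Gamma\le\dim_{\F_p}R_{\Gamma_0(N)}/p=r$. This comes from the deformation side and the surjectivity of $\Phi_\Gamma$ alone.
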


One can approach questions of modularity in the residually reducible case by the techniques developed by Skinner and Wiles \cite{SkinnerWiles99}.  However, we believe their techniques would only show that the natural map $R_{\Gamma} \to \TT_{\Gamma}$ is an isomorphism after taking the quotient by the nilradical and inverting $p$ on both rings, neither of which we require for our results. We expect our precise integral modularity theorems to be of greater utility. Indeed, the integral modularity theorem at level $\Gamma_0(N)$ was important both in works of Calegari--Emerton \cite{CE2005} and Wake--Wang-Erickson \cite{WWE}.  In forthcoming work, we establish a connection between the modularity theorem $R_{\Gamma_0(N^2)} \cong \TT_{\Gamma_0(N^2)}$ and the equivariant Tamagawa number conjecture at $s = -1$ for the Tate motive over the maximal $p$-extension of $\Q(\zeta_N)/\Q$. Here, $\zeta_N$ denotes a primitive $N$-th root of unity. The integral modularity theorem is essential for establishing this connection.

\subsection{Leitfaden}
\cref{sec:HeckeAlgsDefRings} contains definitions and preliminary results on Eisenstein series and Hecke algebras. \cref{sec:DefRings} introduces the pseudodeformation rings that will be matched with the corresponding Hecke algebras. \cref{sec:reductionstep} contains the abstract commutative lemmas and counting arguments for ranks of Hecke algebras that eventually allow us to leverage information from level $\Gamma_0(N)$. \cref{sec:modrepthy} introduces the $\Z_p[\Delta]^+$-algebra structure on $\TT_{\Gamma_0(N^2)}$ via its Galois action and completes the proofs of the main results stated in \cref{sec:introduction}.

\subsection{Notation}\label{subsec:notation}
Throughout we fix primes $N, p \geq 5$ such that $N \equiv 1 \bmod p$.  For any integer $n \geq 1$ we write $\zeta_n$ for a primitive $n$-th root of unity.  Let $K/\Q$ be the maximal subfield of $\Q(\zeta_N)$ with $p$-power degree, which is nontrivial since $N \equiv 1 \bmod p$.  Let $\Delta = \Gal{K}{\Q}$, which is canonically identified with the maximal $p$-power quotient of $(\Z/N\Z)^\times$.  Let $p^s \coloneqq |\Delta|$.  For $\delta \in \Delta$, we use $[\delta]$ to denote the corresponding group-like element in the group ring $\Z_p[\Delta]$.

For any field $L$ of characteristic zero, fix an algebraic closure $\overline{L}$ of $L$, and let $G_L = \Gal{\overline{L}}{L}$.  For each prime $\ell$ we fix embeddings $\overline{\Q} \hookrightarrow \overline{\Q}_\ell$, which induces a map $G_{\Q_\ell} \to G_\Q$.  Write $I_\ell$ for the inertia subgroup of $G_{\Q_\ell}$, which we may view inside $G_\Q$ via the given map.  We fix the set $S = \{N, p, \infty\}$ and write $G_{\Q,S}$ for the quotient of $G_\Q$ corresponding to the maximal extension of $\Q$ unramified outside $S$.  For each prime $\ell \not\in S$, fix a choice of a Frobenius element at $\ell$, denoted $\Frob_\ell \in G_{\Q,S}$.  Let $\varepsilon \colon G_{\Q, S} \to \Z_p^\times$ be the $p$-adic cyclotomic character and $\omega \colon G_{\Q, S} \to \F_p^\times$ its reduction modulo $p$.  Let $\kappa \colon G_{\Q, S} \to \Z_p[\Delta]^\times$ be the natural projection of $G_{\Q, S}$ onto $\Delta$ composed with the natural inclusion into $\Z_p[\Delta]$.  Given a finite order character $\chi$, we write $\Z_p[\chi]$ for the $p$-adic ring (topologically) generated by the values of $\chi$.

For an integer $M$, let $\Gamma_0(M)$ be the subgroup of $\SL_2(\Z)$ whose lower left entry is divisible by $M$ and $\Gamma_1(M)$ the subgroup of $\Gamma_0(M)$ whose diagonal entries are congruent to $1$ modulo $M$.  We use $\Gamma$ to denote any of the $\Gamma_0(N), \Gamma_1(N), \Gamma_0(N^2), \Gamma_1(N^2)$.  For a $\Z_p$-algebra $A$, write $M_2(\Gamma; A)$ for the space of weight $2$ modular forms of level $\Gamma$ with coefficients in $A$ and $S_2(\Gamma; A)$ for the subspace of cuspforms.  When $A = \Z_p$ we write $M_2(\Gamma)$ and $S_2(\Gamma)$.  Recall that $M_2(\Gamma; A) = M_2(\Gamma) \otimes A$ and similarly for $S_2(\Gamma; A)$.  

For each prime $\ell$ we have a Hecke operator $T_\ell$ acting on $M_2(\Gamma; A)$. 
When $\Gamma = \Gamma_1(M)$, there is a natural action on $M_2(\Gamma, A)$ by $(\Z/M\Z)^\times$ given by the diamond operators $\langle d \rangle$ for $d \in (\Z/M\Z)^\times$.  When $\varphi(M) \coloneqq |(\Z/M\Z)^\times|$ is invertible in $A$ and $A$ contains the $\varphi(M)$-th roots of unity, there is a natural decomposition into eigenspaces for this action,
\[
M_2(\Gamma_1(M); A) = \bigoplus_{\chi \colon (\Z/M\Z)^\times \to A^\times} M_2(M, \chi; A),
\]
where $M_2(M, \chi; A)$ is the $\chi$-isotypic component.  A similar decomposition for cusp forms, and we use $S_2(M, \chi; A)$ for the $\chi$-isotypic component of $S_2(\Gamma_1(M); A)$.  For $f \in M_2(M, \chi; A)$, we call $\chi$ the \textit{Nebentypus} of $f$ and denote it by $\chi_f$.

We call a modular form $f = \sum_{n=0}^\infty a_n(f)q^n$ \textit{normalized} if $a_1(f) = 1$.  A normalized form $f \in S_2(\Gamma_1(M))$ is \textit{primitive} if it is an eigenform for all Hecke operators away from $M$ and $f$ is a newform of level $M$ \cite[p. 164]{Miyakebook}.  

Let $\overline{\Z}_p$ be the integral closure of $\Z_p$ in $\overline{\Q}_p$ and $\pp$ its unique maximal ideal.  We say that $f, g \in M_2(\Gamma; \overline{\Z}_p)$ are \textit{congruent modulo $\pp$} if $a_n(f) \equiv a_n(g) \bmod \pp$ for all $n \neq 0$.  We say that $f$ and $g$ are \textit{congruent modulo $\pp$ away from $N$} if $a_n(f) \equiv a_n(g) \bmod \pp$ for all $n \geq 0$ such that $N \nmid n$.  

If $f \in S_2(N^r, \chi; \overline{\Q}_p)$ is a normalized Hecke eigenform for the operators away from $N$, write $\rho_f$ for the $\pp$-adic Galois representation associated with $f$ satisfying $\tr \rho_f(\Frob_\ell) = a_\ell(f)$ and $\det \rho_f(\Frob_\ell) = \chi(\ell)\ell$ for all primes $\ell \nmid Np$.


\section{Hecke algebras}\label{sec:HeckeAlgsDefRings}
In this section we define the relevant Hecke algebras and deformation rings that are needed to state the main theorems in \cref{sec:introduction} precisely.  We begin with a review of Eisenstein series in \cref{subsec:EisSeries} and then define the Hecke algebras that parametrize Eisenstein congruences in \cref{subsec:Heckealgs}.  

\subsection{Eisenstein series}
\label{subsec:EisSeries}
Recall the function
\[
E_2(z) \coloneqq \frac{-1}{24} + \sum_{n \geq 1} \sigma(n)q^n,
\]
where $q = e^{2\pi iz}$ and $\sigma(n) \coloneqq \sum_{0 < d \mid n} d$.  It is a nearly holomorphic modular form of weight 2 and level 1.  One obtains a modular form of level $\Gamma_0(N)$ with $N$ prime via $N$-stabilization. Explicitly,
\[
E_{2,N}(z) \coloneqq E_2(z) - NE_2(Nz) \in M_2(\Gamma_0(N)).
\]
It is an eigenform with $T_\ell$-eigenvalue $\ell+1$ for all primes $\ell \neq N$ and $T_N$-eigenvalue 1.  Much of this article is concerned with understanding congruences between $E_{2,N}$ and other modular forms of weight 2 and level $\Gamma_1(N^j)$ for some $j \geq 1$.  It is sufficient to consider congruences with newforms.  Carayol showed that one need only consider level $\Gamma_1(N^2)$; more precisely, if $f$ is a primitive eigenform of weight 2 and level $\Gamma_1(N^j)$ for some $j \geq 1$ that is congruent to $E_{2,N}$ modulo $\pp$ away from $N$, then $j \leq 2$ \cite[\S 1.1]{Carayol89}.  In this section we describe the Eisenstein series of weight 2 and level $\Gamma_1(N^2)$ that are Hecke eigenforms and are congruent to $E_{2,N}$ modulo $\pp$ away from $N$.  This is standard and can be found in most introductory textbooks on modular forms, for instance \cite[Theorem 4.6.2]{DiamondShurman}.

\subsubsection{Level \texorpdfstring{$\Gamma_1(N)$}{}}\label{level1N}
It is well known that $E_{2,N}$ is the only Eisenstein series of weight 2 and level $\Gamma_0(N)$.  To define the others of level $\Gamma_1(N)$, we establish some notation.  If $\eta, \varphi$ are Dirichlet characters and $n > 0$ an integer, define
\[
\sigma_{\eta,\varphi}(n) \coloneqq \sum_{0 < d \mid n} \eta(n/d)\varphi(d)d.
\]
When $\eta$ and $\varphi$ are both trivial, we recover the function $\sigma$ that describes the Fourier coefficients of $E_2$ above.  For any nontrivial Dirichlet character $\chi$ of conductor $N$ such that $\chi(-1) = 1$, there is a two-dimensional space of Eisenstein series in $M_2(N, \chi; \overline{\Q}_p)$.  It is spanned by $E_{\chi, 1}$ and $E_{1, \chi}$, whose $q$-expansions at infinity are given by
\[
E_{\chi,1}(z) \coloneqq \sum_{n \geq 1} \sigma_{\chi, 1}(n)q^n,
\]
and
\[
E_{1, \chi}(z) \coloneqq \frac{L(-1,\chi)}{2} + \sum_{n \geq 1} \sigma_{1,\chi}(n)q^n,
\]
where $L(s, \chi)$ is the Dirichlet $L$-function of $\chi$.  (Although the constant term of the $q$-expansion at infinity of $E_{\chi,1}$ is $0$, it is not a cusp form as there exist cusps at which its $q$-expansion has a nonzero constant term.)  Both $E_{1, \chi}$ and $E_{\chi, 1}$ are normalized eigenforms with the following eigenvalues.
\[
\begin{array}{|c|c|c|}
\hline
 & T_\ell\text{-eigenvalue},\, \ell \neq N & T_N\text{-eigenvalue}\\
 \hline
E_{\chi, 1} & \sigma_{\chi, 1}(\ell) = \chi(\ell) + \ell & N\\
E_{1, \chi} & \sigma_{1, \chi}(\ell) = 1 + \chi(\ell)\ell & 1\\
\hline
\end{array}
\]
\begin{lemma}\label{lem:conglevelN}
The Eisenstein series $E_{\chi, 1}$ is congruent to $E_{2, N}$ modulo $\pp$ away from $N$ if and only if $\chi$ factors through $\Delta$.  The analogous statement is true for $E_{1,\chi}$.  
\end{lemma}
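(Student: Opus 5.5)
The plan is to compare Fourier coefficients $a_n$ with $N \nmid n$ directly. Both series are multiplicative in $n$, so it suffices to control prime-power coefficients at primes $\ell \neq N$, and ultimately the prime coefficients. For $\ell \neq N$ we have $a_\ell(E_{\chi,1}) = \chi(\ell) + \ell$ and $a_\ell(E_{2,N}) = 1 + \ell$. For $n$ coprime to $N$ the coefficients are $\sigma_{\chi,1}(n) = \sum_{d\mid n}\chi(n/d)d$ and $\sigma(n) = \sum_{d \mid n} d$ respectively. Hence congruence away from $N$ amounts to
\[
\sum_{d\mid n}(\chi(n/d)-1)d \equiv 0 \bmod \pp \quad \text{for all } n \text{ with } N\nmid n .
\]
For $E_{1,\chi}$ the same analysis applies with $\sigma_{1,\chi}(n)=\sum_{d\mid n}\chi(d)d$.

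For the forward direction, specialize to $n = \ell$ prime, $\ell \neq N$. The condition becomes $\chi(\ell) \equiv 1 \bmod \pp$; for $E_{1,\chi}$ it is $\chi(\ell)\ell \equiv \ell$, which for $\ell \neq p$ gives the same. By Dirichlet's theorem every class in $(\Z/N\Z)^\times$ contains a prime $\ell \neq p$, so $\chi(a) \equiv 1 \bmod \pp$ for all $a \in (\Z/N\Z)^\times$. The values of $\chi$ are roots of unity, and a root of unity $\zeta$ with $\zeta \equiv 1 \bmod \pp$ must have $p$-power order, since roots of unity of order prime to $p$ inject into the residue field. Thus $\chi$ has $p$-power order. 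Since $(\Z/N\Z)^\times$ is cyclic, this means $\chi$ factors through its maximal $p$-quotient $\Delta$.

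For the converse, suppose $\chi$ factors through $\Delta$. Then $\chi$ takes values in $p$-power roots of unity, all of which are $\equiv 1 \bmod \pp$, so $\chi(m) \equiv 1$ for every $m$ coprime to $N$. For $N\nmid n$ every divisor $d$ and every cofactor $n/d$ is coprime to $N$, so each term $\chi(n/d)d$ is congruent to $d$ and the sums agree modulo $\pp$; likewise for $\sigma_{1,\chi}$.

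Some bookkeeping remains. The constant term is excluded because $N \mid 0$, so only $n \geq 1$ with $N \nmid n$ matter. The evenness condition $\chi(-1)=1$ is automatic for characters through $\Delta$, since $-1$ has order $2$ and $p$ is odd, so the statement makes sense for every such $\chi$. The only mildly delicate point is the prime $\ell = p$ in the $E_{1,\chi}$ case, where $\chi(p)p \equiv p$ holds trivially. This is why I use Dirichlet's theorem to pick representatives $\ell \neq p$, and I do not expect any real obstacle beyond that.
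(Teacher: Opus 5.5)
Your proof is correct and follows essentially the paper's route: compare $a_\ell$ at primes $\ell\neq N$, use density of primes to conclude $\bar\chi=1$ (your appeal to Dirichlet's theorem is the abelian case of the paper's Chebotarev step), deduce that $\chi$ has $p$-power order, and read off the converse directly from the Fourier coefficients. The only difference is that you cancel the $\ell$ term directly instead of invoking Brauer--Nesbitt, which is slightly more elementary. It also makes explicit two points the paper leaves implicit: the need for $\ell\neq p$ in the $E_{1,\chi}$ case, and the fact that prime-to-$p$ roots of unity reduce injectively modulo $\pp$.
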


\begin{proof}
By definition of $\Delta$, a mod-$N$ Dirichlet character factors through $\Delta$ if and only if it has $p$-power order.  If $\chi$ has $p$-power order then it is immediate from the definitions of $E_{\chi,1}$ and $E_{1,\chi}$ that their Fourier coefficients satisfy the desired congruences. 
Conversely, suppose that $E_{2,N}$ and $E_{\chi,1}$ are congruent modulo $\pp$ away from $N$.  Then for all primes $\ell \neq N$ we have
\[
1 + \ell = a_\ell(E_{2,N}) \equiv a_\ell(E_{\chi,1}) = \bar{\chi}(\ell) + \ell \bmod \pp.
\]
By the Chebotarev density theorem, it follows that $\tr(1 \oplus \omega) = \tr(\bar{\chi} \oplus \omega)$, and thus $\{1,\omega\} = \{\bar{\chi},\omega\}$ by the Brauer--Nesbitt theorem (over $\overline{\F}_p$).  Thus $\bar{\chi} = 1$ and so $\chi$ has $p$-power order.  An analogous argument works for $E_{1,\chi}$ in place of $E_{\chi,1}$.
\end{proof}
\subsubsection{Level \texorpdfstring{$\Gamma_1(N^2)$}{}}\label{subsubsec:levelGamma1N2}
We begin by describing the Eisenstein series of level $\Gamma_1(N^2)$ that are eigenforms whose eigensystems do not appear in lower level.  Let $\chi_1,\chi_2$ be Dirichlet characters with conductors $c_1, c_2$, respectively.  Assume that $c_1c_2 = N^2$ and $(\chi_1\chi_2)(-1)=1$.  We have
\[
E_{\chi_1,\chi_2}(z) \coloneqq \begin{cases} 
\sum_{n \geq 1} \sigma_{\chi_1,\chi_2}(n)q^n & c_1 \neq 1\\
\frac{L(-1,\chi_2)}{2} + \sum_{n \geq 1} \sigma_{\chi_1,\chi_2}(n)q^n & c_1 = 1,
\end{cases}
\]
which is in $M_2(\Gamma_1(N^2))$.  Each $E_{\chi_1,\chi_2}$ is a normalized eigenform with the following eigenvalues.
\[
\begin{array}{|c|c|c|}
\hline
 & a_\ell(E_{\chi_1,\chi_2}),\, \ell \neq N & a_N(E_{\chi_1,\chi_2})\\
 \hline
c_1=c_2 = N & & 0\\
c_1=1 & \sigma_{\chi_1, \chi_2}(\ell) = \chi_1(\ell) + \chi_2(\ell)\ell  & 1\\
c_2=1 & & N\\
\hline
\end{array}
\]

\begin{remark}\label{rem:EisNeben}
The Nebentypus of $E_{\chi_1,\chi_2}$ is $\chi_1\chi_2$.  In particular, $E_{\chi_1, \chi_2} \in M_2(\Gamma_0(N^2))$ if and only if $\chi_2 = \chi_1^{-1}$.
\end{remark}

\begin{lemma}\label{lem:conglevelN2}
Let $\chi_i$ be Dirichlet characters with conductors $c_i$ such that $c_1c_2 = N^2$ and $(\chi_1\chi_2)(-1) = 1$.  The Eisenstein series $E_{\chi_1,\chi_2}$ is congruent to $E_{2,N}$ modulo $\pp$ away from $N$ if and only if $\chi_1$ and $\chi_2$ have $p$-power order.  In that case, we necessarily have $c_1=c_2=N$ and both $\chi_i$'s factor through~$\Delta$.
\end{lemma}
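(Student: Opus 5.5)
We follow the argument of \cref{lem:conglevelN}, using Chebotarev and Brauer--Nesbitt, and then add a short bookkeeping step about conductors.

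\emph{Sufficiency.} Suppose $\chi_1$ and $\chi_2$ have $p$-power order. Their values are then $p$-power roots of unity, so $\chi_i \equiv 1 \bmod \pp$ on all integers prime to $N$. For $n$ with $N \nmid n$, every divisor $d$ of $n$ and every cofactor $n/d$ is prime to $N$. Hence
\[
\sigma_{\chi_1,\chi_2}(n) \equiv \sigma(n) = a_n(E_{2,N}) \bmod \pp .
\]
This covers the coefficients with $n \ge 1$ and $N \nmid n$. The coefficient $n=0$ is divisible by $N$, so it is excluded from the definition of congruence away from $N$. This gives the congruence.

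\emph{Necessity.} Suppose $E_{\chi_1,\chi_2}$ is congruent to $E_{2,N}$ modulo $\pp$ away from $N$. Comparing $a_\ell$ for primes $\ell \neq N$ gives
\[
1+\ell \equiv \chi_1(\ell) + \chi_2(\ell)\ell \bmod \pp .
\]
We view $\chi_i$ as Galois characters unramified outside $N$ with $\chi_i(\Frob_\ell)=\chi_i(\ell)$. Then for $\ell \nmid Np$ the traces of $1\oplus\omega$ and $\bar\chi_1 \oplus \bar\chi_2\omega$ agree on $\Frob_\ell$. By Chebotarev they agree on all of $G_{\Q,S}$, and Brauer--Nesbitt over $\overline{\F}_p$ (using $p>2$) gives the multiset equality $\{1,\omega\} = \{\bar\chi_1, \bar\chi_2\omega\}$.

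We now rule out the crossed matching $\bar\chi_1 = \omega$, $\bar\chi_2\omega = 1$. Here $\bar\chi_1$ is unramified at $p$, since its conductor is a power of $N$. On the other hand, $\omega$ is ramified at $p$, because $p \ge 5$ makes $\omega$ nontrivial on $I_p$. So the crossed matching is impossible.

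This leaves $\bar\chi_1 = 1$ and $\bar\chi_2 = 1$. The kernel of reduction on roots of unity is exactly the $p$-power roots of unity, so each $\chi_i$ has $p$-power order.

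\emph{Conductors.} A character of $(\Z/N^2\Z)^\times \cong (\Z/N\Z)^\times \times (1+N\Z)/(1+N^2\Z)$ with $p$-power order is trivial on the second factor, which has order $N$ prime to $p$. Such a character therefore factors through $(\Z/N\Z)^\times$, and hence through $\Delta$, so its conductor is $1$ or $N$. Since $c_1c_2 = N^2$, the only possibility is $c_1 = c_2 = N$, and both $\chi_i$ factor through $\Delta$.

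The only step needing care is excluding the crossed matching $\bar\chi_1 = \omega$, and the ramification-at-$p$ argument settles it.
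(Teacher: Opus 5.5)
Your proposal is correct and follows essentially the same route as the paper. Both proofs use Chebotarev and Brauer--Nesbitt, rule out the crossed matching because the $\chi_i$ are unramified at $p$ while $\omega$ is ramified there, and derive the conductor claim from the fact that the kernel of $(\Z/N^2\Z)^\times \to (\Z/N\Z)^\times$ has order $N$. The only cosmetic difference is that you verify sufficiency directly on every coefficient $a_n$ with $N \nmid n$, whereas the paper checks only the primes $\ell \neq N$ and then appeals to $E_{\chi_1,\chi_2}$ being an eigenform.
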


\begin{proof}
The final sentence follows from the previous part since all $p$-power order characters of $(\Z/N^2\Z)^\times$ factor through $(\Z/N\Z)^\times$.  Indeed, the kernel of the reduction map is an $N$-group. 

If $\chi_1$ and $\chi_2$ have $p$-power order, then it follows directly from the definition of $E_{\chi_1, \chi_2}$ that its $\ell$-th Fourier coefficient is congruent to $1+\ell = a_\ell(E_{2,N})$ modulo $\pp$ for all primes $\ell \neq N$.  Since $E_{\chi_1,\chi_2}$ is an eigenform, this suffices to show that $E_{\chi_1,\chi_2}$ is congruent to $E_{2,N}$ modulo $\pp$ away from $N$.  Conversely, suppose that $a_\ell(E_{2,N}) \equiv a_\ell(E_{\chi_1,\chi_2})$ for all primes $\ell \neq N$; that is, $1+\ell \equiv \chi_1(\ell) + \chi_2(\ell)\ell \bmod p$.  By the Chebotarev density theorem, it follows that $\tr(1\oplus\omega) = \tr(\bar{\chi}_1 \oplus \bar{\chi}_2\omega)$.  By the Brauer--Nesbitt theorem, we have $\{1, \omega\} = \{\bar{\chi}_1, \bar{\chi}_2\omega\}$.  Since both $\chi_i$'s are unramified at $p$ while $\omega$ is ramified at $p$, we must have $\bar{\chi}_1 = 1 = \bar{\chi}_2$.  Thus $\chi_1$ and $\chi_2$ have $p$-power order.
\end{proof}

Next we consider the Eisenstein series of level $\Gamma_1(N^2)$ that come from level $\Gamma_1(N)$, and hence their prime-to-$N$ eigensystems appear with multiplicity 2.  For any normalized eigenform $f \in M_2(\Gamma_1(N); \overline{\Z}_p)$, recall that both $f(z)$ and $f(Nz)$ define modular forms of level $\Gamma_1(N^2)$.  Let $M_f$ be the $\overline{\Z}_p$-span of $f$ and $f(Nz)$ in $M_2(\Gamma_1(N^2); \overline{\Z}_p)$, which is stable under all of the Hecke operators.  Every element of $M_f$ has $T_\ell$-eigenvalue equal to $a_\ell(f)$ for all primes $\ell \neq N$.

\begin{proposition}\label{prop:stabilizesoTN=0}
Fix a normalized eigenform $f \in M_2(\Gamma_1(N); \overline{\Z}_p)$.
\begin{enumerate}[label=(\roman*),leftmargin=*]
\item There is a normalized $f_0 \in M_f$ such that $T_Nf_0 = 0$.  Explicitly, $f_0(z) = f(z) - a_N(f)f(Nz)$. 
\item The form $f_0$ is the unique normalized $T_N$-eigenform in $M_f$ whose $T_N$-eigenvalue is congruent to $0$ modulo $\pp$.
\item If $1_N$ denotes the trivial mod-$N$ Dirichlet character, then $f_0 = f \otimes 1_N$.
\item The Nebentypus of $f_0$ is the same as that of $f$. 
\end{enumerate}

\end{proposition}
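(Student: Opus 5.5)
The plan is to compute $T_N$ on $M_f$ directly from $q$-expansions at level $N^2$. At level $\Gamma_1(N^2)$ the prime $N$ divides the level, so $T_N$ acts on $q$-expansions as $U_N$: $a_n(T_N g) = a_{Nn}(g)$. For $g(z) = f(Nz)$ we have $a_n(g) = a_{n/N}(f)$ (zero unless $N\mid n$), so $U_N(f(Nz)) = f(z)$. For $f$ itself we use that at level $\Gamma_1(N)$, $T_N$ also acts as $U_N$ and $f$ is an eigenform, so $a_{Nn}(f) = a_N(f)a_n(f)$; this holds for all $n\ge 0$, including the constant term in the Eisenstein case, because the level-$N$ eigenvalue identity is an identity of full $q$-expansions. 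Hence $U_N f = a_N(f) f$ on the level-$N^2$ side as well. So on the basis $\{f, f(Nz)\}$, $T_N$ acts by the matrix $\begin{pmatrix} a_N(f) & 1\\ 0 & 0\end{pmatrix}$, meaning $T_N f = a_N(f) f$ and $T_N f(Nz) = f$.

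For (i), apply this to $f_0 = f - a_N(f) f(Nz)$ to get $T_N f_0 = a_N(f) f - a_N(f) f = 0$. Since $a_1(f(Nz)) = 0$, $f_0$ is normalized. For (ii), the eigenvalues of $T_N$ on $M_f$ are $a_N(f)$ and $0$. A normalized element $f + c f(Nz)$ is an eigenvector with eigenvalue $\lambda$ exactly when $a_N(f) f + c f = \lambda f + \lambda c f(Nz)$, that is, $\lambda c = 0$ and $\lambda = a_N(f) + c$. Thus either $c = 0$ and $\lambda = a_N(f)$, or $\lambda = 0$ and $c = -a_N(f)$, giving $f_0$. The main point is to check that $a_N(f)$ is not congruent to $0$ mod $\pp$, so that the eigenvalue condition singles out $f_0$. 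If $a_N(f) \equiv 0$ but $a_N(f) \neq 0$, then $f$ itself would be a second such eigenform. I would handle this by invoking the fact that $|a_N(f)|$ is a $\pp$-adic unit or $N$ for weight-2 forms of level $\Gamma_1(N)$. For Eisenstein series the table gives $a_N \in \{1, N\}$, both units since $N \neq p$. For newforms of level $\Gamma_1(N)$ with nontrivial Nebentypus, $|a_N|^2 = N$, so $a_N$ is a unit in $\overline{\Z}_p$. For $\Gamma_0(N)$ newforms $a_N = \pm 1$. For oldforms from level 1 in weight 2 there are no cusp forms, and $E_2$ itself is not holomorphic, so only $E_{2,N}$ occurs with $a_N = 1$. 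If $a_N(f) = 0$, then $f = f_0$ and uniqueness is immediate from the computation.

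For (iii), recall that $f \otimes 1_N = \sum_{N \nmid n} a_n(f) q^n$. Using multiplicativity $a_{Nm}(f) = a_N(f) a_m(f)$ (again from $U_N$-eigen), we get $a_n(f_0) = a_n(f) - a_N(f) a_{n/N}(f) = 0$ when $N\mid n$, and $a_n(f_0) = a_n(f)$ otherwise; the constant terms also cancel. For (iv), $f(Nz)$ has the same Nebentypus as $f$: conjugating by $\mathrm{diag}(N,1)$ maps $\Gamma_0(N^2)$ into $\Gamma_0(N)$ and preserves the lower-right entry mod $N$. So $f_0$, a linear combination of the two, lies in the same $\chi_f$-isotypic component.

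I expect the only real obstacle to be the unit claim for $a_N(f)$ in (ii), which relies on standard facts about $a_N$ for newforms at level exactly divisible by $N$ (Atkin--Lehner--Li / Miyake Theorem 4.6.17).
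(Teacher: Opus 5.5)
Your proof is correct. For (i), and for reducing (ii) to the claim that $a_N(f)$ is a $p$-adic unit, it matches the paper, including the same case split: Eisenstein series with $a_N\in\{1,N\}$, $\Gamma_0(N)$-newforms with $a_N=\pm1$, and newforms with nontrivial Nebentypus.

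The routes differ in three places:

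\begin{itemize}
\item \emph{Nontrivial Nebentypus case of (ii).} The paper argues Galois-theoretically: $f$ is ramified principal series at $N$, so $\rho_f|_{G_{\Q_N}}$ has an unramified subcharacter sending $\Frob_N$ to $a_N(f)$, which is therefore a unit. You instead use Miyake's $|a_N|=\sqrt N$. That works once you add the half-line that $\overline{a_N}$ is an algebraic integer. Then $a_N\overline{a_N}=N$ gives $a_N\mid N$ in $\overline{\Z}$, hence $a_N\in\overline{\Z}_p^\times$ since $N\neq p$. (Your sentence that ``$|a_N(f)|$ is a $\pp$-adic unit or $N$'' is garbled, but the case analysis after it is what carries the argument.)

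\item \emph{Part (iii).} The paper notes that $f\otimes 1_N$ is a normalized eigenform away from $N$ with the eigenvalues of $f$, asserts it lies in $M_f$, and concludes $f\otimes 1_N=f_0$ from $T_N$-eigenvalue $0$ plus the uniqueness in (ii). That step tacitly uses two facts: the twist is modular of level $\Gamma_1(N^2)$, and the prime-to-$N$ eigensystem of $f$ occurs in $M_2(\Gamma_1(N^2))$ only inside $M_f$. Your direct $q$-expansion comparison, using $a_{Nn}(f)=a_N(f)a_n(f)$ for all $n\ge 0$ (including the constant term), needs neither fact nor (ii). It even proves that $f\otimes 1_N$ is modular of level $\Gamma_1(N^2)$, so it is the more self-contained argument.

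\item \emph{Part (iv).} The paper observes that the Nebentypus is determined by the Hecke operators away from $N$, which agree on $f$ and $f_0$. You check directly, by conjugating $\Gamma_0(N^2)$ into $\Gamma_0(N)$, that $f(Nz)$ has Nebentypus $\chi_f$. Both are immediate.
\end{itemize}
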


\begin{proof}
We know that $T_Nf = a_N(f)f$.  From the definition of $T_N$ it follows that $T_N$ sends $f(Nz)$ to $f$, so $f_0(z) = f(z) - a_N(f)f(Nz)$ satisfies $T_Nf_0 = 0$.  

To see the uniqueness claim it suffices to show that $a_N(f)$ is a $p$-adic unit, since in that case the two eigenvalues $a_N(f)$ and $0$ of $T_N$ on $M_f$ are distinct modulo $\pp$ and hence $T_N$ can be integrally diagonalized.  If $f$ is an Eisenstein series, then $a_N(f)$ is $1$ or $N$ (see \cref{level1N}), both of which are $p$-adic units.  If $f \in S_2(\Gamma_0(N))$ then $a_N(f) = \pm 1$ \cite[Theorem 4.6.17(2)]{Miyakebook}.  If $f \in S_2(\Gamma_1(N)) \setminus S_2(\Gamma_0(N))$ then $f$ is ramified principal series at $N$ \cite[Proposition 2.8(1)]{LoefflerWeinstein}.  Therefore its associated $p$-adic Galois representation $\rho_f$ is of the form
\[
\rho_f|_{G_{\Q_N}} \sim \begin{pmatrix}
\chi_1 & *\\
0 & \chi_2
\end{pmatrix}
\]
with $\chi_1$ the unramified character sending $\Frob_N$ to $a_N(f)$ and $\chi_1\chi_2 = \varepsilon\chi_f$.  Thus $a_N(f) = \chi_1(\Frob_N)$ is a $p$-adic unit.

Since $f \otimes 1_N = \sum_{n=0}^\infty 1_N(n)a_n(f)q^n \in M_2(\Gamma_1(N^2); \overline{\Z}_p)$ is a prime-to-$N$ normalized eigenform with $T_\ell$-eigenvalue $a_\ell(f)$ for all primes $\ell \neq N$, we see that $f \otimes 1_N \in M_f$.  Moreover $T_N(f \otimes 1_N) = 0$, so by uniqueness we have $f \otimes 1_N = f_0$.

The final claim follows from the fact that the Nebentypus character is determined by the action of the Hecke operators away from $N$, which act the same way on $f_0$ and $f$.
\end{proof}

\subsection{Hecke algebras}\label{subsec:Heckealgs}
In this section we define the relevant Hecke algebras.  There is a slight difference depending on whether the level is prime or the square of a prime; in the latter case we include the $T_N$ operator to cut out duplicate old eigensystems while in prime level we omit the $T_N$ operator both since it is not needed and for ease of comparison with the work of \cite{WWE}.  This distinction does not make much difference in practice since we immediately show in \cref{lem:reduced} that $T_N$ is $0$ in the prime-square level Hecke algebras of interest.

Let $\Gamma = \Gamma_i(N^j)$ for $i \in \{0,1\}$ and $j \in \{1,2\}$.  Let $H(\Gamma)$ be the $\Z_p$-subalgebra of $\End_{\Z_p}(M_2(\Gamma))$ that is generated by $\{T_\ell \colon \ell \text{ prime}\} \cup \{\langle d \rangle \colon d \in (\Z/N^j\Z)^\times\}$.  Let $H'(\Gamma)$ be the anemic subalgebra of $H(\Gamma)$; that is, $H'(\Gamma)$ is the $\Z_p$-subalgebra of $\End_{\Z_p}(M_2(\Gamma))$ generated by $\{T_\ell \colon \ell \nmid N \text{ prime}\} \cup \{\langle d \rangle \colon d \in (\Z/N^j\Z)^\times\}$.  A containment relation among different choices of $\Gamma$ induces a surjection of the corresponding Hecke algebras.

Since $E_{2,N} \in M_2(\Gamma_0(N))$ is an eigenform with $T_\ell$-eigenvalue $\ell+1$ for primes $\ell \neq N$, there is a $\Z_p$-algebra homomorphism $\lambda'_{E_{2,N}} =\lambda'_{E_{2,N}}(\Gamma_0(N)) \colon H'(\Gamma_0(N)) \to \Z_p$ that sends $T_\ell$ to $1+\ell$ for all primes $\ell \neq N$ and $\langle d \rangle$ to $1$ for all $d \in (\Z/N\Z)^\times$.  By precomposing $\lambda'_{E_{2,N}}$ with restriction maps, we obtain 
\[
\lambda'_{E_{2,N}}(\Gamma) \colon H'(\Gamma) \twoheadrightarrow \Z_p
\]
that sends $T_\ell$ to $\ell + 1$ for all primes $\ell \nmid N$ and $\langle d \rangle$ to $1$ for all $d \in (\Z/N^j\Z)^\times$.  Composing $\lambda'_{E_{2,N}}(\Gamma)$ with the reduction map $\Z_p \to \F_p$ gives a morphism $\bar{\lambda}'_{E_{2,N}}(\Gamma) \colon H'(\Gamma) \twoheadrightarrow \F_p$ whose kernel we call $\m'_\Gamma$ --- a maximal ideal of $H'(\Gamma)$.  Explicitly, 
\[
\m'_\Gamma = \langle p, T_\ell - \ell - 1, \langle d \rangle - 1 \colon \ell \nmid N \text{ prime}, d \in (\Z/N^j\Z)^\times \rangle.
\]

Since $T_N E_{2,N} = E_{2,N}$, there is a unique way to extend $\lambda'_{E_{2,N}}(\Gamma_0(N))$ to $H(\Gamma_0(N))$, namely by sending $T_N$ to $1$.  Call this map $\lambda_{E_{2,N}} \colon H(\Gamma_0(N)) \twoheadrightarrow \Z_p$.  Once again, precomposing $\lambda_{E_{2,N}}$ with restriction maps yields morphisms
\[
\lambda_{E_{2,N}}(\Gamma) \colon H(\Gamma) \twoheadrightarrow \Z_p
\]
that sends $T_N$ to $1$, $T_\ell$ to $\ell + 1$ for all primes $\ell \nmid N$ and $\langle d \rangle$ to $1$ for all $d \in (\Z/N^j\Z)^\times$.  Let $\bar{\lambda}_{E_{2,N}}(\Gamma)$ be its mod-$p$ reduction and $\m_{\Gamma, 1}$ the kernel of $\bar{\lambda}_{E_{2,N}}(\Gamma)$.  Explicitly,
\[
\m_{\Gamma,1} = \langle p, T_N - 1, T_\ell - \ell - 1, \langle d \rangle - 1 \colon \ell \nmid N \text{ prime}, d \in (\Z/N^j\Z)^\times \rangle.
\]

When $j = 2$ there is a second way to extend $\lambda'_{E_{2,N}}(\Gamma)$ to $H(\Gamma)$.  Namely, applying \cref{prop:stabilizesoTN=0} to $E_{2,N}$, we find that there is a unique normalized eigenform \[E_{1,1}(z) \coloneqq E_{2,N}(z) - a_N(E_{2,N})E_{2,N}(Nz) = E_{2,N}(z) - E_{2,N}(Nz)\] in $M_2(\Gamma_0(N^2))$ with $T_\ell$-eigenvalue $1+\ell$ for all primes $\ell \neq N$ and $T_N$-eigenvalue $0$.  Thus there is a $\Z_p$-algebra homomorphism $\lambda_{E_{1,1}} = \lambda_{E_{1,1}}(\Gamma_0(N^2)) \colon H(\Gamma_0(N^2)) \to \Z_p$ sending $T_\ell$ to $1+\ell$ for all primes $\ell \neq N$, $T_N$ to $0$, and $\langle d \rangle$ to $1$ for all $d \in (\Z/N^2\Z)^\times$.  Again, restriction maps give a corresponding map 
\[
\lambda_{E_{1,1}}(\Gamma_1(N^2)) \colon H(\Gamma_1(N^2)) \twoheadrightarrow \Z_p, 
\]
which sends $T_N$ to $0$, $T_\ell$ to $\ell + 1$ for all primes $\ell \nmid N$, and $\langle d \rangle$ to $1$ for all $d \in (\Z/N^2\Z)^\times$.  Let $\bar{\lambda}_{E_{1,1}}(\Gamma)$ be its mod-$p$ reduction and $\m_{\Gamma,0}$ the kernel of $\bar{\lambda}_{E_{1,1}}(\Gamma)$.  Explicitly,
\[
\m_{\Gamma,0} = \langle p, T_N, T_\ell - \ell - 1, \langle d \rangle - 1 \colon \ell \nmid N \text{ prime}, d \in (\Z/N^2\Z)^\times \rangle.
\]

\begin{definition}\label{defn:TGamma}
Let $\Gamma = \Gamma_i(N^j)$ for $i \in \{0,1\}$ and $j \in \{1,2\}$.
\begin{itemize}[leftmargin=*]
    \item If $j = 1$, let $\TT_\Gamma$ be the completion of $H'(\Gamma)$ at $\m'_{\Gamma}$, which is a local ring whose maximal ideal we denote by $\m_\Gamma$.  The ring $\TT_\Gamma$ parametrizes modular forms in $M_2(\Gamma; \overline{\Z}_p)$ that are congruent to $E_{2,N}$ modulo $\pp$ away from $N$.
    \item If $j = 2$, let $\TT_\Gamma$ be the completion of $H(\Gamma)$ at $\m_{\Gamma,0}$, which is a local ring whose maximal ideal we denote by $\m_\Gamma$.  The ring $\TT_\Gamma$ parametrizes modular forms in $M_2(\Gamma; \overline{\Z}_p)$ that are congruent to $E_{1,1}$ modulo $\pp$.  
\end{itemize}
\end{definition}
\begin{remark}\label{rem:Heckealgobservations}
We make a few observations about \cref{defn:TGamma}. 
\begin{itemize}[leftmargin=*]
    \item The ring $\TT_{\Gamma_0(N)}$ is equal to the ring $\TT$ defined in \cite[\S 3.1.3]{WWE}, which is also Mazur's Hecke algebra \cite[\S II.7]{Mazur}.
    \item By definition there are natural restriction maps $\TT_{\Gamma_1(N^j)} \twoheadrightarrow \TT_{\Gamma_0(N^j)}$ for any $j \in \{1,2\}$.
    \item There is also a $\Z_p$-algebra homomorphism $\TT_{\Gamma_0(N^2)} \twoheadrightarrow \TT_{\Gamma_0(N)}$ that sends $T_\ell$ to $T_\ell$ for all primes $\ell \nmid N$; see \cref{subsubsec:mapfromTtolevelN}.
\end{itemize}
For the rest of the paper $\TT_{\Gamma}$ denotes the completions at the Eisenstein ideals defined in \cref{defn:TGamma}. 
\end{remark}

\begin{lemma}\label{lem:reduced}
The ring $\TT_\Gamma$ is a reduced ring.  Moreover $T_N$ acts by $0$ on $M_2(\Gamma)_{\m_\Gamma}$ for $\Gamma = \Gamma_0(N^2)$ or $\Gamma_1(N^2)$.
\end{lemma}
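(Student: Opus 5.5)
The plan is to show that, after extending scalars to $\overline{\Q}_p$, the completed Hecke algebra acts semisimply on the localized space of modular forms. Then $\TT_\Gamma\otimes\overline{\Q}_p$ is a product of copies of $\overline{\Q}_p$ indexed by eigensystems. Since $\TT_\Gamma$ is a finite free $\Z_p$-module (a direct factor of the finite $\Z_p$-algebra $H(\Gamma)$ or $H'(\Gamma)$, which acts faithfully on the torsion-free module $M_2(\Gamma)$), it embeds into $\TT_\Gamma\otimes\overline{\Q}_p$. Reducedness of $\TT_\Gamma$ then follows from reducedness of this product of fields.

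\emph{Prime level ($j=1$).} Here $\TT_\Gamma$ is a localization of the anemic algebra $H'(\Gamma)$. I would use that $M_2(\Gamma_1(N);\overline{\Q}_p)$ has a basis of eigenforms for all $T_\ell$ with $\ell\nmid N$ and all $\langle d\rangle$. This follows from the decomposition
\[
M_2(\Gamma_1(N);\overline{\Q}_p)=S_2\oplus\mathrm{Eis}.
\]
\begin{itemize}
\item The cusp forms have level $N$ prime, with no oldforms from level $1$ in weight $2$. Hence they are spanned by newforms, which are simultaneous eigenforms for the anemic operators, and those operators are normal for the Petersson product.
\item The Eisenstein part is spanned by $E_{2,N}$, $E_{\chi,1}$ and $E_{1,\chi}$, which are eigenforms. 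The two series with the same $\chi$ have distinct eigensystems, since $\chi(\ell)+\ell\neq 1+\chi(\ell)\ell$ for suitable $\ell$ when $\chi\neq 1$.
\end{itemize}
Thus $H'(\Gamma)\otimes\overline{\Q}_p$ is a subalgebra of a product of copies of $\overline{\Q}_p$, hence reduced. Its localization $\TT_\Gamma\otimes\overline{\Q}_p$ is a direct factor, hence also reduced.

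\emph{Prime-square level.} Here $T_N$ is included, and $T_N$ need not be semisimple on old spaces. The key step is to decompose $M_2(\Gamma_1(N^2);\overline{\Q}_p)$ into three kinds of pieces, each stable under the full Hecke algebra.
\begin{itemize}
\item \textbf{New eigenforms of level $N^2$.} This includes newforms and the new Eisenstein series $E_{\chi_1,\chi_2}$ of \cref{subsubsec:levelGamma1N2}. Each spans a line on which $T_N$ acts by a scalar (in fact $0$ for cuspidal newforms at level $N^2$ with $N^2$ dividing the conductor, and $a_N$ for the listed Eisenstein series).
\item \textbf{Old pieces $M_f$} for $f$ a normalized eigenform of level $N$.
\item \textbf{Old pieces from level $1$.} There are no weight-$2$ forms of level $1$, so these do not occur.
\end{itemize}
On each $M_f$, \cref{prop:stabilizesoTN=0} shows that $T_N$ has the two distinct eigenvalues $a_N(f)$ (a unit) and $0$, so it is diagonalizable there. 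Hence the full $H(\Gamma)\otimes\overline{\Q}_p$ acts semisimply and is reduced, and its localization at $\m_{\Gamma,0}$ is too.

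For the second claim, note that localizing at $\m_{\Gamma,0}$ keeps exactly the eigensystems with $T_N$-eigenvalue $\equiv 0 \bmod \pp$ (and residually Eisenstein away from $N$). On the $M_f$-pieces this selects only $f_0$, whose $T_N$-eigenvalue is $0$. On new pieces, the only eigenforms whose $T_N$-eigenvalue is a nonunit are those with eigenvalue exactly $0$:
\begin{itemize}
\item For residually Eisenstein Eisenstein series, \cref{lem:conglevelN2} gives $c_1=c_2=N$, so $a_N=0$.
\item For cuspidal newforms of level $N^2$ with trivial-at-$N^2$ conductor issues, $a_N=0$ because $N^2$ divides the conductor while $N$ divides the conductor of the nebentypus at most once. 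I would cite the standard fact (Miyake / Li) that $a_N(f)=0$ for a newform of level $N^2$ unless the nebentypus has conductor $N^2$. The latter is impossible here, since the residually trivial nebentypus has $p$-power order and factors through $\Delta$.
\end{itemize}
So $T_N$ kills every eigenvector in the semisimple module $M_2(\Gamma)_{\m_\Gamma}\otimes\overline{\Q}_p$, hence kills $M_2(\Gamma)_{\m_\Gamma}$ by torsion-freeness.

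The main obstacle is the careful bookkeeping on new cuspidal forms of level $N^2$: justifying $a_N=0$ via the conductor of the nebentypus. I would handle it by the criterion that $a_N\neq 0$ for a newform of level $N^2$ forces $\mathrm{cond}(\chi_f)=N^2$, contradicting that $\chi_f$ has $p$-power order.
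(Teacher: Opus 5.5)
Your proposal is correct and follows essentially the paper's route: you embed $\TT_\Gamma$ into $\TT_\Gamma\otimes\overline{\Q}_p$ and use the eigenbasis made of the level-$N^2$ newforms, the new Eisenstein series $E_{\chi_1,\chi_2}$, and the forms $f_0$ from the two-dimensional old pieces $M_f$, together with the vanishing of $a_N$ for newforms whose Nebentypus has conductor dividing $N$. The only difference is organizational. You prove that $T_N$ is semisimple on all of $M_2(\Gamma_1(N^2);\overline{\Q}_p)$, using $a_N(f)\neq 0$ on each $M_f$, whereas the paper works only on the localized space and gets semisimplicity there directly from $T_N=0$; your explicit restriction to Nebentypus of conductor dividing $N$ is also a more precise use of Miyake's result than the paper's blanket citation.
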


\begin{proof}
Since $\TT_\Gamma$ has no $p$-torsion, $\TT_\Gamma$ embeds in $\TT_\Gamma \otimes \overline{\Q}_p$.  If the generators of $\TT_\Gamma$ are semisimple on $M_2(\Gamma)_{\m_\Gamma} \otimes \overline{\Q}_p$, then $\TT_\Gamma \otimes \overline{\Q}_p$ is a commutative semisimple ring and hence a product of fields, from which it follows that $\TT_\Gamma$ is reduced.  

It is well known that $T_\ell$ is semisimple for all primes $\ell \neq N$, and thus $\TT_\Gamma$ is reduced for $\Gamma = \Gamma_0(N)$ or $\Gamma_1(N)$.  For the other levels, it suffices to show that $T_N$ is semisimple on $M_2(\Gamma_1(N^2))_{\m_{\Gamma_1(N^2)}} \otimes \overline{\Q}_p$, which follows if $T_N$ acts by $0$ on this space.  A basis for this space consists of the union of:
\begin{itemize}[leftmargin=*]
\item primitive forms in $S_2(\Gamma_1(N^2))_{\m_{\Gamma_1(N^2)}} \otimes \overline{\Q}_p$,
\item $\{E_{\chi_1,\chi_2} \colon 1 \neq \chi_1,\chi_2 \colon \Delta \to \overline{\Q}_p^\times\}$, 
\item for each normalized eigenform $f \in M_2(\Gamma_1(N))_{\m_{\Gamma_1(N)}} \otimes \overline{\Q}_p$, the element $f_0$ from \cref{prop:stabilizesoTN=0}.
\end{itemize}
Note that $T_N$ acts by $0$ on the space of newforms in $S_2(\Gamma_1(N^2); \overline{\Q}_p)$ \cite[Theorem 4.6.17(3)]{Miyakebook}.  For nontrivial characters $\chi_i$ of $\Delta$ we have $a_N(E_{\chi_1,\chi_2}) = 0$ by definition, so $T_N$ acts by $0$ on those basis elements as well.  Finally, $T_Nf_0 = 0$ by \cref{prop:stabilizesoTN=0}, so in fact $T_N$ acts by $0$ on $M_2(\Gamma_1(N^2))_{\m_{\Gamma_1(N^2)}} \otimes \overline{\Q}_p$.
\end{proof}

Since $\TT_\Gamma$ is a reduced ring, there is an injection $\TT_\Gamma \hookrightarrow \prod_\pp \TT_{\Gamma}/\pp$, where $\pp$ runs over the minimal prime ideals of $\TT_\Gamma$.  The set of minimal prime ideals is in bijection with the set 
\[
\mathcal{F}_\Gamma \coloneqq \{f \in M_2(\Gamma)_{\m_\Gamma} \otimes \overline{\Q}_p \colon f \text{ normalized } \TT_\Gamma\text{-eigenform}\}.  
\]
In particular, if $f \in \mathcal{F}_\Gamma$ 
then we have an injection
\begin{align*}
\TT_\Gamma &\hookrightarrow \prod_{f \in \mathcal{F}_\Gamma} \overline{\Z}_p\\
T_\ell &\mapsto (a_\ell(f))_{f \in \mathcal{F}_\Gamma}.
\end{align*}
We often identify $\TT_\Gamma$ with its image under this map.

\subsubsection{The map \texorpdfstring{$\TT_{\Gamma_0(N^2)} \to \TT_{\Gamma_0(N)}$}{}}\label{subsubsec:mapfromTtolevelN}
There are natural restriction maps $\TT_{\Gamma_1(N^j)} \twoheadrightarrow \TT_{\Gamma_0(N^j)}$.  We also require a ring homomorphism $\TT_{\Gamma_0(N^2)} \to \TT_{\Gamma_0(N)}$.  This map is not immediate from the definitions.  Indeed, $M_2(\Gamma_0(N))_{\m_{\Gamma_0(N)}}$ is not a subset of $M_2(\Gamma_0(N^2))_{\m_{\Gamma_0(N^2)}}$ since $T_N$ acts by $+1$ on $M_2(\Gamma_0(N))_{\m_{\Gamma_0(N)}}$ while it acts by $0$ on $M_2(\Gamma_0(N^2))_{\m_{\Gamma_0(N^2)}}$, and thus the desired map is not simply given by restriction. Nevertheless, we have the following proposition. 


\begin{proposition}[{\cite[Corollary 2.12]{LMP}}]\label{prop:mapfromGamma0N2toGamma0N}
There is a $\Z_p$-algebra homomorphism $\TT_{\Gamma_0(N^2)} \twoheadrightarrow \TT_{\Gamma_0(N)}$ that sends $T_\ell$ to $T_\ell$ for all primes $\ell \nmid N$.
\end{proposition}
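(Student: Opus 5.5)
The plan is to build the map through the identification of each Hecke algebra with its image in a product of coefficient rings, as explained after \cref{lem:reduced}. Recall that $\TT_{\Gamma_0(N)} \hookrightarrow \prod_{f \in \mathcal{F}_{\Gamma_0(N)}} \overline{\Z}_p$ and $\TT_{\Gamma_0(N)}$ is topologically generated by the $T_\ell$ with $\ell \nmid N$ (it is the anemic algebra, and the diamond operators are trivial at level $\Gamma_0(N)$). So it suffices to find a $\Z_p$-algebra map from $\TT_{\Gamma_0(N^2)}$ to $\prod_{f \in \mathcal{F}_{\Gamma_0(N)}} \overline{\Z}_p$ sending $T_\ell \mapsto (a_\ell(f))_f$. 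Its image is then automatically contained in $\TT_{\Gamma_0(N)}$, and the map is surjective onto it.

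To build this map, I would attach to each $f \in \mathcal{F}_{\Gamma_0(N)}$ an element of $\mathcal{F}_{\Gamma_0(N^2)}$, namely the form $f_0 = f - a_N(f) f(Nz) = f \otimes 1_N$ from \cref{prop:stabilizesoTN=0}.
\begin{itemize}
\item The form $f_0$ lies in $M_2(\Gamma_0(N^2); \overline{\Z}_p)$, since by \cref{prop:stabilizesoTN=0}(iv) its Nebentypus is trivial.
\item It is a normalized eigenform for all $T_\ell$ with $\ell \nmid N$, with eigenvalues $a_\ell(f)$.
\item It satisfies $T_N f_0 = 0$.
\end{itemize}
Since $f$ is congruent to $E_{2,N}$ modulo $\pp$ away from $N$, it follows that $f_0$ is congruent to $E_{1,1}$ modulo $\pp$ at all $T_\ell$ ($\ell \ne N$) and at $T_N$. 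Hence the eigencharacter $\lambda_{f_0}\colon H(\Gamma_0(N^2)) \to \overline{\Z}_p$ reduces to $\bar\lambda_{E_{1,1}}$. Its kernel modulo $\pp$ is therefore $\m_{\Gamma_0(N^2),0}$, so $\lambda_{f_0}$ factors through the completion $\TT_{\Gamma_0(N^2)}$. Equivalently, $f_0$ lies in $M_2(\Gamma_0(N^2))_{\m_{\Gamma_0(N^2)}} \otimes \overline{\Q}_p$, i.e.\ $f_0 \in \mathcal{F}_{\Gamma_0(N^2)}$.

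Taking the product over $f \in \mathcal{F}_{\Gamma_0(N)}$ of the maps $\lambda_{f_0}$ gives a continuous $\Z_p$-algebra homomorphism
\[
\TT_{\Gamma_0(N^2)} \to \prod_{f \in \mathcal{F}_{\Gamma_0(N)}} \overline{\Z}_p,
\]
sending $T_\ell \mapsto (a_\ell(f))_f$ for $\ell \nmid N$ and $T_N \mapsto 0$. One can view this map as the projection of $\TT_{\Gamma_0(N^2)} \subset \prod_{g \in \mathcal{F}_{\Gamma_0(N^2)}} \overline{\Z}_p$ onto the factors indexed by the $f_0$.

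To identify the image, note that $\TT_{\Gamma_0(N^2)}$ is topologically generated over $\Z_p$ by the $T_\ell$ for $\ell \nmid N$ together with $T_N$, which maps to $0$. The image is therefore the closed $\Z_p$-subalgebra generated by the tuples $(a_\ell(f))_f$. This subalgebra is precisely the image of $\TT_{\Gamma_0(N)}$, which is closed because $\TT_{\Gamma_0(N)}$ is finite over $\Z_p$. This yields the desired surjection $\TT_{\Gamma_0(N^2)} \twoheadrightarrow \TT_{\Gamma_0(N)}$ with $T_\ell \mapsto T_\ell$.

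The main point requiring care is the membership $f_0 \in \mathcal{F}_{\Gamma_0(N^2)}$. This needs both that the Nebentypus stays trivial, so that $f_0$ really has level $\Gamma_0(N^2)$, and that the $T_N$-eigenvalue is $0$ rather than $a_N(f)=1$, so that $f_0$ falls in the component at $\m_{\Gamma_0(N^2),0}$ instead of the one at $\m_{\Gamma_0(N^2),1}$. Both facts are supplied by \cref{prop:stabilizesoTN=0}. A secondary check is that $f$ is an eigenform for the full Hecke algebra at level $N$, which holds because $T_N$ is semisimple there with eigenvalue $1$ on Eisenstein and $\pm1$ on cuspidal forms.
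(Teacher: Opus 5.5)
Your argument is correct, but it takes a different route from the paper. The paper, following \cite[Proposition 2.9, Corollary 2.12]{LMP}, first establishes $\TT_{\Gamma_0(N^2)} \cong H'(\Gamma_0(N^2))_{\m'_{\Gamma_0(N^2)}}$. This says that at the Eisenstein ideal, adjoining $T_N$ does not enlarge the anemic Hecke algebra localized at $\m'$, even though the latter also acts on the $T_N = 1$ part of $M_2(\Gamma_0(N^2))$. The desired map is then literally restriction of anemic Hecke operators from $M_2(\Gamma_0(N^2))$ to the submodule $M_2(\Gamma_0(N))$.

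You bypass that comparison entirely. You use $f \mapsto f_0 = f \otimes 1_N$ from \cref{prop:stabilizesoTN=0} to send $\mathcal{F}_{\Gamma_0(N)}$ into $\mathcal{F}_{\Gamma_0(N^2)}$, and you project onto the corresponding eigencharacters. You then identify the image using the embedding $\TT_{\Gamma_0(N)} \hookrightarrow \prod_{f \in \mathcal{F}_{\Gamma_0(N)}} \overline{\Z}_p$ coming from \cref{lem:reduced}, together with the fact that $\TT_{\Gamma_0(N)}$ is generated by the $T_\ell$ with $\ell \nmid N$. The extra generator $T_N$ causes no trouble, since it acts by $0$ on every $f_0$.

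Your route is shorter and self-contained given the tools already in this paper. The paper's route yields a stronger structural fact, that $T_N$ is redundant relative to the anemic localization, and it realizes the map as an honest restriction on Hecke modules rather than only through eigensystems. The essential input is the same in both: every Eisenstein-congruent eigensystem at level $N$ recurs at level $N^2$ with $T_N$-eigenvalue $0$. This is what makes restriction to the $T_N \equiv 0$ part injective on the anemic algebra in the paper's version, and it is what makes your projection well defined.

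A minor simplification: your first bullet does not need the Nebentypus argument, since $f(Nz)$ visibly has level $\Gamma_0(N^2)$ whenever $f$ has level $\Gamma_0(N)$.
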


The key point in the proof of \cref{prop:mapfromGamma0N2toGamma0N} is establishing $\TT_{\Gamma_0(N^2)} \cong H'(\Gamma_0(N^2))_{\m'_{\Gamma_0(N^2)}}$; see \cite[Proposition 2.9]{LMP}. The map in \cref{prop:mapfromGamma0N2toGamma0N} is then obtained by composing this identification with the restriction map as follows:
\[\TT_{\Gamma_0(N^2)} \cong H'(\Gamma_0(N^2))_{\m'_{\Gamma_0(N^2)}} \twoheadrightarrow H'(\Gamma_0(N))_{\m'_{\Gamma_0(N^2)}} =\TT_{\Gamma_0(N)}.\]

\section{Deformation rings}\label{sec:DefRings}
In this section we introduce the pseudodeformation rings that will be matched with the Hecke algebras $\TT_\Gamma$ introduced in \cref{subsec:Heckealgs}.  We use the theory of pseudorepresentations due to Chenevier \cite{Chenevier}, which is summarized in \cite[\S 2]{LangWake25} in the 2-dimensional setting.  If $\rho$ is a representation, we denote the associated pseudorepresentation by $\psi(\rho)$, following \cite{WWE}.   Let $\overline{D} \colon G_{\Q,S} \to k$ be the pseudorepresentation $\psi(\omega \oplus 1)$.  All of our pseudorepresentations are continuous. 

Recall that to a 2-dimensional pseudorepresentation $D \colon G \to A$ on a group $G$, one can associate functions called the \textit{trace} and \textit{determinant} of $D$, denoted $\tr(D) \colon G \to A$ and $\det(D) \colon G \to A^\times$, respectively.  When $D = \psi(\rho)$ for a representation $\rho$, we have $\tr(D) = \tr(\rho)$ and $\det(D) = \det(\rho)$.  These functions satisfy the following axioms:
\begin{itemize}
\item $\det(D) \colon G \to A^\times$ is a group homomorphism;
\item $\tr(D)$ is central; that is, $\tr(D)(\sigma\tau) = \tr(D)(\tau\sigma)$ for all $\sigma, \tau \in G$;
\item $\tr(D)(1) = 2$;
\item $\tr(D)(\sigma\tau) + \det(D)(\tau)\tr(D)(\sigma\tau^{-1}) = (\tr(D)(\sigma))(\tr(D)(\tau))$ for all $\sigma, \tau \in G$.
\end{itemize}
Moreover, any pair of functions $(t \colon G \to A, d \colon G \to A^\times)$ satisfying the above axioms can be used to define a 2-dimensional $A$-valued pseudorepresentation on $G$ \cite[Example 1.8]{Chenevier}, \cite[\S 2]{LangWake25}.

A major advance in the theory of pseudodeformations came with the work of Wake and Wang-Erickson \cite{WWE2019}, where they put forward a general framework for defining deformation conditions for pseudorepresentations.  Their framework is especially useful for defining local conditions at $p$ that arise from $p$-adic Hodge theory.  A key insight of their work is that although a pseudorepresentation does not come with a module, they always arise from Cayley--Hamilton representations, which do have an associated module.  Via these Cayley--Hamilton representations, Wake and Wang-Erickson define what it means for a pseudodeformation to satisfy a deformation condition \cite[Definition 2.5.4]{WWE2019}.  We use their work to impose a finite flat condition at $p$ on the pseudodeformation rings we define in this section.


For a complete local noetherian $\Z_p$-algebra $A$, there are several deformation conditions we can impose on a pseudorepresentation $D \colon G_{\Q,S} \to A$ deforming $\overline{D}$:
\begin{enumerate}[label=(\roman*)]
\item\label{ff} (finite flat at $p$) $D$ is finite flat at $p$ in the sense of \cite[Definitions 2.5.4, 5.2.1]{WWE2019};
\item\label{fixdet} (fixed determinant) $\det(D) = \varepsilon$;
\item\label{uoS} (unramified-or-Steinberg at $N$) $D|_{I_N} = \psi(1 \oplus 1)$, so $D|_{I_N}$ is the trivial pseudorepresentation;
\item\label{sqfree} (squarefree level) $D|_{I_N} = \psi(\chi \oplus 1)$ for some character $\chi$ of $I_N$.
\end{enumerate}

Note that condition \ref{uoS} implies condition \ref{sqfree}.  We always impose condition \ref{ff} since our Hecke algebras are in weight 2 and have prime-to-$p$ level, so the associated Galois representations are necessarily finite flat at $p$.  

\begin{definition}
We define the following pseudodeformation rings.
\begin{itemize}[leftmargin=*] 
\item Let $R_{\Gamma_1(N^2)}$ be the universal pseudodeformation ring of $\overline{D}$ parametrizing pseudodeformations satisfying condition \ref{ff}, which exists by \cite[\S 5.2]{WWE2019}.  Write $D^{\univ} \colon G_{\Q,S} \to R_{\Gamma_1(N^2)}$ for the universal pseudodeformation; we also write $D_{\Gamma_1(N^2)} = D^{\univ}$.  All of the other deformation rings we define are quotients of $R_{\Gamma_1(N^2)}$.

\item Let $R_{\Gamma_0(N^2)}$ be the universal pseudodeformation ring of $\overline{D}$ parametrizing pseudodeformations satisfying conditions \ref{ff} and \ref{fixdet}.  It is the quotient of $R_{\Gamma_1(N^2)}$ by the ideal generated by $\{\det(D^{\univ})(\sigma) - \varepsilon(\sigma) \colon \sigma \in G_{\Q,S}\}$.   Write $D_{\Gamma_0(N^2)} \colon G_{\Q,S} \to R_{\Gamma_0(N^2)}$ for the associated universal pseudodeformation.

\item Let $R_{\Gamma_0(N)}$ be the universal pseudodeformation ring of $\overline{D}$ parametrizing pseudodeformations satisfying all (equivalently, the first three) of the above conditions.  The existence of $R_{\Gamma_0(N)}$ is proved in \cite[Proposition 4.1.1]{WWE}, where the ring is called $R$.  Moreover, they show that $R_{\Gamma_0(N)} \cong \TT_{\Gamma_0(N)}$ \cite[Corollary 7.1.3]{WWE}.

\item Define $R_{\Gamma_1(N)}$ as the quotient of $R_{\Gamma_1(N^2)}$ by the ideal  
\[
\mathfrak{a} \coloneqq \langle \tr(D^{\univ})(\sigma\tau) - 1 - (\tr(D^{\univ})(\sigma) - 1)(\tr(D^{\univ})(\tau) - 1) \colon \sigma, \tau \in I_N \rangle.
\]
Write $D_{\Gamma_1(N)} \colon G_{\Q,S} \to R_{\Gamma_1(N)}$ for the associated pseudodeformation.  The following lemma shows that $R_{\Gamma_1(N)}$ is the universal pseudodeformation ring of $\overline{D}$ parametrizing pseudodeformations satisfying conditions \ref{ff} and \ref{sqfree}.   
\end{itemize}
\end{definition}

\begin{lemma}
The ring $R_{\Gamma_1(N)}$ prorepresents the functor sending an Artinian local $\Z_p$-algebra $A$ with residue field $\F_p$ to the set of pseudodeformations $D \colon G_{\Q,S} \to A$ of $\overline{D}$ satisfying \ref{ff} and \ref{sqfree}. 
\end{lemma}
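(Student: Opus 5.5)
Since $R_{\Gamma_1(N^2)}$ prorepresents pseudodeformations satisfying \ref{ff}, and $R_{\Gamma_1(N)} = R_{\Gamma_1(N^2)}/\mathfrak{a}$, a pseudodeformation $D \colon G_{\Q,S} \to A$ satisfying \ref{ff} factors through $R_{\Gamma_1(N)}$ exactly when the induced map kills $\mathfrak{a}$. This happens exactly when
\[
\tr(D)(\sigma\tau) - 1 = (\tr(D)(\sigma)-1)(\tr(D)(\tau)-1) \quad \text{for all } \sigma,\tau \in I_N.
\]
So it suffices to show the following: for a pseudorepresentation $D$ of $I_N$ with values in an Artinian local $\Z_p$-algebra $A$ (residue field $\F_p$) deforming the trivial pseudorepresentation $\overline{D}|_{I_N} = \psi(1\oplus 1)$, we have $D|_{I_N} = \psi(\chi \oplus 1)$ for some character $\chi$ if and only if $t \coloneqq \tr(D)|_{I_N}$ satisfies $t - 1$ multiplicative.

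For the forward direction, if $D|_{I_N} = \psi(\chi\oplus 1)$ then $t - 1 = \chi$, which is multiplicative; this is immediate.

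For the converse, set $\chi \coloneqq t - 1 \colon I_N \to A$, which is multiplicative by hypothesis.
\begin{itemize}
\item $\chi(1) = t(1) - 1 = 1$, and $\chi(\sigma) \equiv 1 \bmod \m_A$ because $D$ is residually trivial. Hence $\chi$ takes values in $A^\times$ and is a (continuous) character.
\item We have $\tr(D|_{I_N}) = \chi + 1 = \tr(\psi(\chi\oplus 1))$. It remains to check $\det(D)|_{I_N} = \chi$.
\item Apply the pseudorepresentation identity $t(\sigma\tau) + \det(D)(\tau)\,t(\sigma\tau^{-1}) = t(\sigma)t(\tau)$ with $\sigma = 1$. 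This gives $2 + \det(D)(\tau)\,t(\tau^{-1}) = 2t(\tau)$.
\item Write $t(\tau) = 1 + \chi(\tau)$ and $t(\tau^{-1}) = 1 + \chi(\tau)^{-1}$. The identity becomes
\[
\det(D)(\tau)\bigl(1 + \chi(\tau)^{-1}\bigr) = 2\chi(\tau) = \chi(\tau)\bigl(1+\chi(\tau)^{-1}\bigr)\cdot\frac{2\chi(\tau)}{\chi(\tau)+1}.
\]
Rather than simplify that, rewrite it as $\det(D)(\tau)(\chi(\tau)+1) = 2\chi(\tau)^2$ after multiplying through by $\chi(\tau)$.
\end{itemize}

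This is the main obstacle. Since $p$ is odd and $\chi(\tau) \equiv 1$, the element $\chi(\tau)+1 \equiv 2$ is a unit, so the last relation would give $\det(D)(\tau) = 2\chi(\tau)^2/(1+\chi(\tau))$. That does not obviously equal $\chi(\tau)$. So the $\sigma = 1$ identity alone is insufficient, and I will use the full identity with general $\sigma$.

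Substitute $t = 1 + \chi$ into the general identity:
\[
1 + \chi(\sigma)\chi(\tau) + d(\tau)\bigl(1 + \chi(\sigma)\chi(\tau)^{-1}\bigr) = \bigl(1+\chi(\sigma)\bigr)\bigl(1+\chi(\tau)\bigr),
\]
where $d = \det(D)|_{I_N}$. This simplifies to
\[
d(\tau)\bigl(1 + \chi(\sigma)\chi(\tau)^{-1}\bigr) = \chi(\sigma) + \chi(\tau) = \chi(\tau)\bigl(1+\chi(\sigma)\chi(\tau)^{-1}\bigr).
\]
The factor $1 + \chi(\sigma)\chi(\tau)^{-1} \equiv 2$ is a unit, so $d(\tau) = \chi(\tau)$. (Consistently, the $\sigma = 1$ case should reduce to the same conclusion once the arithmetic is done correctly.)

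Thus $\tr$ and $\det$ of $D|_{I_N}$ agree with those of $\psi(\chi\oplus 1)$. Since a $2$-dimensional pseudorepresentation is determined by its trace and determinant (\cite[Example 1.8]{Chenevier}; invertibility of $2$ is available), $D|_{I_N} = \psi(\chi\oplus 1)$.

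Finally, I will note that the condition defining $\mathfrak{a}$ is closed under base change, so the two functors on Artinian rings agree. Then $R_{\Gamma_1(N)}$, as a quotient of the prorepresenting ring $R_{\Gamma_1(N^2)}$, prorepresents the subfunctor cut out by \ref{ff} and \ref{sqfree}.
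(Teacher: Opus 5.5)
Your proof is correct and follows the paper's argument: you reduce to showing that the classifying map kills $\mathfrak{a}$ if and only if $\chi = \tr(D)|_{I_N} - 1$ is a character, and then identify $D|_{I_N}$ with $\psi(\chi\oplus 1)$. The only difference is that you check $\det(D)|_{I_N} = \chi$ by hand from the pseudorepresentation identity, where the paper cites that $2$-dimensional pseudorepresentations are determined by their trace when $2$ is invertible; note that your abandoned $\sigma = 1$ computation contains a slip ($t(\sigma\tau)$ at $\sigma = 1$ is $t(\tau)$, not $2$), and done correctly it already gives $\det(D)(\tau) = \chi(\tau)$, so that detour can simply be deleted.
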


\begin{proof}
Let $A$ be an Artinian local $\Z_p$-algebra with residue field $\F_p$ and $D \colon G_{\Q, S} \to A$ a pseudodeformation of $\overline{D}$ satisfying conditions \ref{ff} and \ref{sqfree}.  The universality of $R_{\Gamma_1(N^2)}$ guarantees that $D$ arises from a morphism $\varphi \colon R_{\Gamma_1(N^2)} \to A$.  We only need to show that $\varphi$ factors through the quotient by $\mathfrak{a}$ if and only if $D$ satisfies \ref{sqfree}.
 
Define $\chi \colon I_N \to A$ by $\chi(\sigma) = \tr(D)(\sigma) - 1$, so $\mathfrak{a} = \langle \chi(\sigma\tau) - \chi(\sigma)\chi(\tau) \colon \sigma, \tau \in I_N\rangle$.  Thus $\varphi$ factors through the quotient by $\mathfrak{a}$ if and only if $\chi$ is a character.  If $\chi$ is a character, then $\tr(D)|_{I_N} = \chi + 1$.  Since 2-dimensional pseudorepresentations are determined by their trace in characteristic not 2 \cite[Example 7.6, Proposition 7.23]{Chenevier}, it follows that $D|_{I_N} = \psi(\chi \oplus 1)$.  Conversely, if there is a character $\chi' \colon I_N \to A^\times$ such that $D|_{I_N} = \psi(\chi' \oplus 1)$, then $\tr(D)|_{I_N} = \chi' + 1$, in which case $\chi = \chi'$ is a character. 
\end{proof}

\subsection{The map from \texorpdfstring{$R_\Gamma$}{} to \texorpdfstring{$\TT_\Gamma$}{}}\label{subsec:mapRtoT}
In this section we construct a surjective map $R_\Gamma \to \TT_\Gamma$ for each of the four choices of $\Gamma$.  When $\Gamma = \Gamma_0(N)$, the map $R_\Gamma \to \TT_\Gamma$ is constructed and shown to be an isomorphism in \cite[Proposition 4.2.4, Corollary 7.1.3]{WWE}, so we focus on the other three levels.  These maps are shown to be isomorphisms in \cref{sec:reductionstep} and \cref{sec:modrepthy}.   

\begin{proposition}\label{prop:RtoT}
There is a morphism of $\Z_p$-algebras $\Phi_\Gamma \colon R_\Gamma \to \TT_\Gamma$.
\end{proposition}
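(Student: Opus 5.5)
We construct $\Phi_\Gamma$ by building a pseudodeformation $D_\Gamma^{\TT}\colon G_{\Q,S}\to\TT_\Gamma$ of $\overline{D}$ satisfying the conditions that cut out $R_\Gamma$, and then invoking universality. The case $\Gamma=\Gamma_0(N)$ is \cite[Proposition 4.2.4]{WWE}. For the other three levels we work with the injection $\TT_\Gamma\hookrightarrow\prod_{f\in\mathcal{F}_\Gamma}\overline{\Z}_p$ from \cref{lem:reduced}.

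For each $f\in\mathcal{F}_\Gamma$ we take a Galois representation $\rho_f\colon G_{\Q,S}\to\GL_2(\overline{\Q}_p)$. If $f$ is a cuspform, this is the representation attached to $f$. If $f$ is Eisenstein, we take the sum of characters read off from the tables in \cref{subsec:EisSeries}; for $E_{\chi_1,\chi_2}$ this is $\chi_1\oplus\chi_2\varepsilon$, where the $\chi_i$ are viewed as Galois characters via class field theory. These satisfy $\tr\rho_f(\Frob_\ell)=a_\ell(f)$ and $\det\rho_f(\Frob_\ell)=\chi_f(\ell)\ell$. Let $D_f=\psi(\rho_f)$, which is valued in $\overline{\Z}_p$. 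The product $\prod_f D_f$ is a pseudorepresentation valued in $\prod_f\overline{\Z}_p$. Its trace at $\Frob_\ell$ is $T_\ell\in\TT_\Gamma$. Its determinant at $\Frob_\ell$ is $\ell\langle\ell\rangle$; in level $\Gamma_0(N^2)$ this is just $\ell$. By Chebotarev and continuity, the trace and determinant functions take values in the closed subring $\TT_\Gamma$. A pair (trace, determinant) satisfying the axioms listed in \cref{sec:DefRings} defines a pseudorepresentation, so we obtain $D^{\TT}_\Gamma\colon G_{\Q,S}\to\TT_\Gamma$. Reducing modulo $\m_\Gamma$ gives trace $1+\ell$ and determinant $\ell$ at $\Frob_\ell$, so $D^{\TT}_\Gamma$ deforms $\overline{D}=\psi(\omega\oplus1)$.

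Next we check the deformation conditions.
\begin{itemize}
\item \textbf{Fixed determinant} for $\Gamma_0(N^2)$: here $\det D_f=\varepsilon$ for every $f$, because each Nebentypus is trivial.
\item \textbf{Squarefree condition} for $\Gamma_1(N)$: every $f$ of level $\Gamma_1(N)$ is either Eisenstein or ramified principal series at $N$, so $D_f|_{I_N}=\psi(\chi_f\oplus1)$. The proof of \cref{prop:stabilizesoTN=0} recorded this shape via \cite{LoefflerWeinstein}; for $\Gamma_0(N)$-forms it is unipotent. Consequently $\tr D|_{I_N}-1$ is multiplicative componentwise, hence in $\TT_\Gamma$, so the ideal $\mathfrak{a}$ maps to $0$.
\item \textbf{Finite flat at $p$}, for all levels: this is the main obstacle, since finite flatness for pseudorepresentations in the sense of \cite{WWE2019} is not simply checked componentwise on traces. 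We would imitate \cite[Proposition 4.2.4]{WWE}. Each $\rho_f$ for cuspidal $f$ has a lattice coming from the $p$-divisible group of $J_1(N^2)$; since $p\nmid N$, this is finite flat at $p$. For Eisenstein $f$, the sums $\chi_1\oplus\chi_2\varepsilon$ with $\chi_i$ unramified at $p$ are finite flat. We then use the result of \cite[\S 5.2]{WWE2019} that the finite-flat locus is a closed condition. In particular, a pseudorepresentation valued in a reduced ring that embeds in a product of pseudorepresentations each finite flat at $p$ is itself finite flat. Concretely, we would take the Cayley--Hamilton algebra $\TT_\Gamma[G_{\Q,S}]/\ker$ inside $\prod_f M_2(\overline{\Z}_p)$ and verify that it satisfies the condition of \cite[Definition 2.5.4]{WWE2019}.
\end{itemize}

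Universality of $R_\Gamma$ then yields $\Phi_\Gamma$. The map is surjective because $T_\ell=\tr D^{\TT}_\Gamma(\Frob_\ell)$ and the diamond operators are recovered from $\det D^{\TT}_\Gamma(\Frob_\ell)/\ell$. In prime-square level, $T_N=0$ by \cref{lem:reduced}, so these elements topologically generate $\TT_\Gamma$.
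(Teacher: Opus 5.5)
Your argument is correct, but it orders the steps differently from the paper, and that ordering is what creates the ``main obstacle'' you flag. The paper never builds a $\TT_\Gamma$-valued pseudodeformation first. For each $f\in\mathcal{F}_\Gamma$ it applies universality directly to the $\overline{\Z}_p$-valued $D_f$, where finite flatness is immediate: for cusp forms it comes from the Tate module of an abelian variety with good reduction at $p$, and for Eisenstein series from a sum of finite flat characters. This gives $\varphi_f\colon R_{\Gamma_1(N^2)}\to\overline{\Z}_p$. The paper then checks that $\varphi_f$ factors through the relevant quotient, using the same local facts you use (trivial Nebentypus, or an unramified-at-$N$ constituent), and takes $\Phi_\Gamma=\prod_f\varphi_f$. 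The image lies in $\TT_\Gamma$ because $R_\Gamma$ is topologically generated by the $\tr(D_\Gamma)(\Frob_\ell)$ for $\ell\nmid Np$, and these map to $T_\ell$. The $\TT_\Gamma$-valued pseudodeformation is only defined afterwards, as $\Phi_\Gamma\circ D_\Gamma$.

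Your closure claim is true for exactly this reason, and no Cayley--Hamilton verification inside $\prod_f M_2(\overline{\Z}_p)$ is needed. The finite flat pseudodeformation ring is a quotient $R^{\univ}/J$ of the unrestricted universal ring, so a pseudodeformation is finite flat if and only if its classifying map kills $J$. Since $\TT_\Gamma\hookrightarrow\prod_f\overline{\Z}_p$ is injective (this is where reducedness enters), $\ker(R^{\univ}\to\TT_\Gamma)=\bigcap_f\ker(R^{\univ}\to\overline{\Z}_p)\supseteq J$. Your route gives an explicit $D_{\TT_\Gamma}$ up front, in the style of \cite[Proposition 4.2.4]{WWE}. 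The paper's route is shorter because every condition is checked on $\overline{\Z}_p$-points.

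Your final paragraph on surjectivity goes beyond the statement, and as written it has a gap. $\TT_\Gamma$ is generated by all $T_\ell$, including $T_p$, and $T_p$ is not the trace of a Frobenius since $p\in S$. So the $T_\ell$ with $\ell\nmid Np$, the diamond operators and $T_N$ are not yet known to generate $\TT_\Gamma$. The paper handles this in \cref{prop:RtoTsurj} with an ordinarity argument. It writes $U_p^{-1}$, the inverse of the unit root of $X^2-T_pX+p\langle p\rangle$, in terms of traces at $\tau\Frob_p$ and $\Frob_p$, and then recovers $T_p=U_p+U_p^{-1}p\langle p\rangle$.
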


\begin{proof}
The strategy is to use the inclusion
\[
\TT_\Gamma \hookrightarrow \prod_{f \in \mathcal{F}_\Gamma} \overline{\Z}_p
\]
that sends $T_\ell$ to $(a_\ell(f))_{f \in \mathcal{F}_\Gamma}$ for all primes $\ell$.  In particular, we construct a morphism $R_\Gamma \to \prod_{f \in \mathcal{F}_\Gamma} \overline{\Z}_p$ and then show that its image lies in $\TT_\Gamma$.  This is also the approach taken in \cite[Lemma 3.1]{LangWake25} for level $\Gamma_0(N^2)$ when $N \equiv -1 \bmod p$.

Fix $f \in \mathcal{F}_\Gamma$.  If $f$ is a cuspform, then its associated Galois representation $\rho_f$ gives rise to a pseudorepresentation $D_f \coloneqq \psi(\rho_f) \colon G_{\Q,S} \to \overline{\Z}_p$.  Since $f \in M_2(\Gamma; \overline{\Z}_p)_{\m_\Gamma}$, it follows that $a_\ell(f) \equiv 1+\ell \bmod \pp$ for all primes $\ell \neq N$.  That is, $\psi(\rho_f) \equiv \overline{D} \bmod \pp$.  Moreover, $D_f$ is finite flat at $p$ since the level of $f$ is prime to $p$; indeed, $\rho_f$ comes from the $p$-adic Tate module of an abelian variety with good reduction at $p$.  Thus by universality there is a $\Z_p$-algebra homomorphism $\varphi_f \colon R_{\Gamma_1(N^2)} \to \overline{\Z}_p$ with the property that $\tr(D^{\univ})(\Frob_\ell)$ is sent to $a_\ell(f)$ for all primes $\ell \nmid Np$.  If $f \in \mathcal{F}_{\Gamma_0(N^2)}$, then $\det \rho_f = \varepsilon$ and hence $\varphi_f$ factors through the quotient to $R_{\Gamma_0(N^2)}$.  If $f \in \mathcal{F}_{\Gamma_1(N)}$ then $f$ is either Steinberg or ramified principal series at $N$.  In either case, $\rho_f|_{G_{\Q_N}}$ has an unramified submodule and hence $D_f$ satisfies condition \ref{sqfree}.  Thus in this case $\varphi_f$ factors through the quotient to $R_{\Gamma_1(N)}$.

Next assume that $f \in \mathcal{F}_\Gamma$ is an Eisenstein series.  Since $f$ is congruent to $E_{2,N}$ modulo $\pp$ away from $N$, it follows from \cref{lem:conglevelN}, \cref{lem:conglevelN2}, and \cref{prop:stabilizesoTN=0} that there are mod-$N$ Dirichlet characters $\chi_1, \chi_2$ that factor through $\Delta$ such that $a_\ell(f) = \chi_1(\ell) + \chi_2(\ell)\ell$ for all primes $\ell \neq N$.  Viewing $\chi_1, \chi_2$ as characters of $G_{\Q,S}$, we see that $\psi(\chi_1 \oplus \chi_2\varepsilon) \colon G_{\Q, S} \to \overline{\Z}_p$ is a pseudodeformation of $\overline{D}$ satisfying $\tr(\psi(\chi_1 \oplus \chi_2\varepsilon))(\Frob_\ell) = a_\ell(f)$ for all primes $\ell \nmid Np$.  Recall that a character $\chi$ of $G_{\Q, S}$ is finite flat at $p$ if and only if either $\chi$ is unramified at $p$ or $\chi = \chi'\varepsilon$ with $\chi'$ unramified at $p$.  In particular, both $\chi_1$ and $\chi_2\varepsilon$ are finite flat at $p$, from which it follows that $\psi(\chi_1\oplus\chi_2\varepsilon)$ is finite flat at $p$.  By universality there is a $\Z_p$-algebra homomorphism $\varphi_f \colon R_{\Gamma_1(N^2)} \to \overline{\Z}_p$ that sends $\tr(D^{\univ})(\Frob_\ell)$ to $a_\ell(f)$ for all primes $\ell \nmid Np$.  If $f \in \mathcal{F}_{\Gamma_0(N^2)}$ then we must have $\chi_2 = \chi_1^{-1}$ by \cref{rem:EisNeben}, in which case $\det(\chi_1\oplus\chi_2\varepsilon) = \varepsilon$.  Hence if $f \in \mathcal{F}_{\Gamma_0(N^2)}$ then $\varphi_f$ factors through the quotient to $R_{\Gamma_0(N^2)}$.  Finally, if $f \in \mathcal{F}_{\Gamma_1(N)}$ then at least one of $\chi_1,\chi_2$ must be trivial.  It follows that $(\chi_1\oplus\chi_2\varepsilon)|_{I_N}$ has a trivial submodule and hence $\psi(\chi_1\oplus\chi_2\varepsilon)$ satisfies \ref{sqfree}.  Thus if $f \in \mathcal{F}_{\Gamma_1(N)}$ then $\varphi_f$ factors through the quotient to $R_{\Gamma_1(N)}$.

Let $\Phi_\Gamma \colon R_\Gamma \to \prod_{f \in \mathcal{F}_\Gamma} \overline{\Z}_p$ be the map given by the product of all of the $\varphi_f$'s described in the last two paragraphs.  We show that the image of $\Phi_\Gamma$ lands in $\TT_\Gamma$.  By the Chebotarev density theorem, $R_\Gamma$ is topologically generated by the set of $\tr(D_\Gamma)(\Frob_\ell)$ for primes $\ell \nmid Np$.  For any such $\ell$ we have that 
\[
\Phi_\Gamma(\tr(D_\Gamma)(\Frob_\ell)) = (a_\ell(f))_{f \in \mathcal{F}_\Gamma} = T_\ell \in \TT_\Gamma,
\]
and hence the image of $\Phi_\Gamma$ is contained in $\TT_\Gamma$.
\end{proof}

Composing $D_\Gamma$ with $\Phi_\Gamma$ we obtain a $\TT_\Gamma$-valued pseudodeformation of $\overline{D}$, which we denote by $D_{\TT_\Gamma} \colon G_{\Q, S} \to \TT_\Gamma$.  It has the property that for any prime $\ell \nmid Np$, $\tr(D_{\TT_\Gamma})(\Frob_\ell) = T_\ell$ and $\det(D_{\TT_\Gamma})(\Frob_\ell) = \ell\langle \ell \rangle$.

\begin{proposition}\label{prop:RtoTsurj}
The maps $\Phi_\Gamma$ of \cref{prop:RtoT} are surjective onto $\TT_{\Gamma}$.
\end{proposition}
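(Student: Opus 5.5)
Since $\TT_\Gamma$ is a complete local ring and $\Phi_\Gamma$ is a continuous map of complete local $\Z_p$-algebras, it suffices to show that the image of $\Phi_\Gamma$ contains a set of topological generators of $\TT_\Gamma$. By the proof of \cref{prop:RtoT}, the image already contains $T_\ell = \Phi_\Gamma(\tr(D_\Gamma)(\Frob_\ell))$ for every prime $\ell \nmid Np$. So the plan is to show that the remaining generators of $\TT_\Gamma$ lie in the closed subalgebra generated by these $T_\ell$ and by other elements visibly in the image. By \cref{defn:TGamma}, those remaining generators are $T_p$, the diamond operators $\langle d \rangle$, and, when $j = 2$, $T_N$.

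We treat these generators in the following order.

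\emph{$T_N$ for $j=2$.} By \cref{lem:reduced}, $T_N = 0$ in $\TT_\Gamma$, so it is in the image.

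\emph{Diamond operators.} For $\ell \nmid Np$ we have $\det(D_{\TT_\Gamma})(\Frob_\ell) = \ell\langle \ell\rangle$, and this is $\Phi_\Gamma(\det(D_\Gamma)(\Frob_\ell))$. Since $\ell \in \Z_p^\times$, the element $\langle \ell \rangle$ lies in the image. By Dirichlet's theorem every class $d \in (\Z/N^j\Z)^\times$ is represented by a prime $\ell \nmid Np$, so every $\langle d\rangle$ is in the image. Here we need $\det(D_\Gamma)(\sigma) \in R_\Gamma$, which holds because the determinant of a pseudorepresentation takes values in the coefficient ring. (For $\Gamma_0$-levels the diamond operators are trivial anyway.)

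\emph{$T_p$.} This is the main obstacle, because $\Frob_p$ is not available in $G_{\Q,S}$. First we would use the standard identity $T_p = \Phi_\Gamma(\tr(D_\Gamma)(\Frob_\ell))$ for a prime $\ell \equiv p \bmod N^j$ chosen so that $\ell$ is close to $p$ $p$-adically. This fails as stated, so instead we would argue via the eigenform embedding $\TT_\Gamma \hookrightarrow \prod_f \overline{\Z}_p$ and the ordinarity forced by the residual representation $\omega \oplus 1$. For each $f \in \mathcal{F}_\Gamma$, the finite flat representation $\rho_f|_{G_{\Q_p}}$ is an extension whose unramified quotient is the character $\alpha_f$ with $\alpha_f(\Frob_p)$ the unit root of $X^2 - a_p(f)X + p\chi_f(p)$. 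Then $a_p(f) = \alpha_f(\Frob_p) + p\chi_f(p)\alpha_f(\Frob_p)^{-1}$.

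On the pseudodeformation side, the finite flat condition of \cite[\S 5]{WWE2019} (as used in \cite[\S 4]{WWE}) should yield an element $U \in R_\Gamma$ that is the "unramified quotient eigenvalue at $\Frob_p$" of the universal Cayley--Hamilton representation restricted to $G_{\Q_p}$. Concretely, we would take a Frobenius lift $\sigma \in G_{\Q_p}$ and define $U$ as the trace of $\sigma$ on the étale quotient, which the finite flat structure makes well defined. Then we check that
\[
\Phi_\Gamma\bigl(U + \det(D_\Gamma)(\sigma)U^{-1}\bigr) = T_p,
\]
comparing componentwise with the formula for $a_p(f)$ above, including the Eisenstein components $\chi_1(p) + \chi_2(p)p$. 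If constructing $U$ directly is too delicate, the fallback is the argument of \cite[Proposition 4.2.4]{WWE} at level $\Gamma_0(N)$. It shows that $T_p$ lies in the closed span of the $T_\ell$, $\ell \nmid Np$, by the Eichler--Shimura/Chebotarev style identity $T_p = \tr\rho(\Frob_p)$ realized in a Hecke-valued Galois representation unramified at $p$ after restricting to inertia invariants. Since $T_N$ has been removed and the relevant local analysis at $p$ is insensitive to the level at $N$, that argument transfers verbatim to the other three levels.

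Combining these steps, the image of $\Phi_\Gamma$ is a closed subalgebra containing all generators, hence equals $\TT_\Gamma$.
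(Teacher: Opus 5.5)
Your handling of $T_\ell$ for $\ell\nmid Np$, of the diamond operators, and of $T_N$ matches the paper. The $T_p$ step, however, has a genuine gap. The identity you propose to check, $\Phi_\Gamma\bigl(U + \det(D_\Gamma)(\sigma)U^{-1}\bigr) = T_p$ for a Frobenius lift $\sigma \in G_{\Q_p}$, is false. Componentwise, $\det(D_f)(\sigma) = \varepsilon(\sigma)\chi_f(p)$, and $\varepsilon(\sigma)\in\Z_p^\times$ is a unit, never $p$. So the left side is $\alpha_f + \varepsilon(\sigma)\chi_f(p)\alpha_f^{-1} = \tr\rho_f(\sigma)$, the trace of a local Galois element, whereas $a_p(f) = \alpha_f + p\chi_f(p)\alpha_f^{-1}$.

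The Hecke eigenvalue $a_p$ is not a Frobenius trace of $\rho_f$. Your fallback description of \cite[Proposition 4.2.4]{WWE} makes the same conflation: the representation is not unramified at $p$, and Frobenius on the unramified quotient gives the unit root $U_p$, not $T_p$. The correct relation is $T_p = U_p + p\langle p\rangle U_p^{-1}$. Here $p\langle p \rangle$ lies in the image because $\langle p\rangle=\langle \ell\rangle$ for a prime $\ell\equiv p \bmod N^j$, which your diamond-operator step already provides.

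Second, you only assert that an element $U\in R_\Gamma$ ``should'' exist, as a trace on an \'etale quotient of the universal Cayley--Hamilton representation. That would need an ordinarity statement for the universal finite-flat object, which you do not establish. The paper avoids this entirely. It fixes $\tau \in I_p$ with $\omega(\tau)\neq 1$ and proves, for each $f$, that $\tr\rho_f(\sigma) = \lambda(\alpha_f)^{-1}(\sigma)\det\rho_f(\sigma)+\lambda(\alpha_f)(\sigma)$ on $G_{\Q_p}$. For cusp forms this is ordinarity via Mazur--Wiles; for the Eisenstein points $\psi(\chi_1\oplus\chi_2\varepsilon)$ it is a direct check. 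It follows that the explicit element $\frac{\tr(D_\Gamma)(\tau\Frob_p) - \tr(D_\Gamma)(\Frob_p)}{\det(D_\Gamma)(\Frob_p)(\varepsilon(\tau)-1)}$ of $R_\Gamma$ maps to $U_p^{-1}$. Since the image of $\Phi_\Gamma$ is local, $U_p$ also lies in it, and then $T_p = U_p + p\langle p\rangle U_p^{-1}$ does too. Using this trace-difference element in place of your $U$, and $p\langle p\rangle$ in place of $\det(D_\Gamma)(\sigma)$, repairs your argument.
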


\begin{proof}
Since $D_{\TT_\Gamma} = \Phi_\Gamma \circ D_{\Gamma}$, we see that for primes $\ell \nmid Np$ we have
\[
T_\ell = \Phi_\Gamma(\tr(D_\Gamma)(\Frob_\ell)) \in \im \Phi_\Gamma \text{ and } \langle \ell \rangle = \Phi_\Gamma(\ell^{-1}\det(D_\Gamma)(\Frob_\ell)) \in \im \Phi_\Gamma.
\]
For $\Gamma \in \{\Gamma_0(N^2), \Gamma_1(N^2)\}$ we have $T_N = 0 \in \TT_\Gamma$ by \cref{lem:reduced}.  It remains to show that $T_p \in \im \Phi_\Gamma$.  The diagram
\[
\xymatrix{
R_{\Gamma_1(N^2)} \ar@{->>}[d] \ar@{->}[r]^{\Phi_{\Gamma_1(N^2)}} & \TT_{\Gamma_1(N^2)}  \ar@{->>}[d] \\
R_{\Gamma_0(N^2)} \ar@{->}[r]_{\Phi_{\Gamma_0(N^2)}} & \TT_{\Gamma_0(N^2)}
}
\]
commutes, and hence the surjectivity of $\Phi_{\Gamma_0(N^2)}$ follows from that of $\Phi_{\Gamma_1(N^2)}$.

Let $\Gamma = \Gamma_1(N)$ or $\Gamma_1(N^2)$.  That $T_p$ is in the image of $\Phi_\Gamma$ follows from the fact that the eigenforms giving $D_{\TT_\Gamma}$ are ordinary at $p$ as in \cite[Proposition 4.2.4]{WWE} and \cite[Lemma 3.1]{LangWake25}.  We give the details for the convenience of the reader.

For each $f \in \mathcal{F}_\Gamma$, we have $a_p(f) \equiv 1 + p \equiv 1 \bmod \pp$, which shows that $T_p \in \TT_\Gamma^\times$.  Thus the polynomial
\[
X^2 - T_pX + p\langle p \rangle \in \TT_\Gamma[X]
\]
has a unique unit root, which we denote by $U_p \in \TT_\Gamma^\times$.  Note that $T_p = U_p + U_p^{-1}p\langle p \rangle$.  Since $\langle p \rangle = \langle \ell \rangle$ for a prime $\ell \nmid Np$ and $\langle \ell \rangle \in \im \Phi_\Gamma$, it suffices to show that $U_p$ and $U_p^{-1}$ are in the image of $\Phi_\Gamma$.  Moreover $\im \Phi_\Gamma$ is a local ring, being the homomorphic image of a local ring.  By definition, if $U_p^{-1} \in \im \Phi_\Gamma$ then it is a unit modulo the maximal ideal of $\im \Phi_\Gamma$ and hence is a unit in $\im \Phi_\Gamma$.  Therefore $U_p \in \im \Phi_\Gamma$ as well, so it suffices to show that $U_p^{-1} \in \im \Phi_\Gamma$.  For each $f \in \mathcal{F}_\Gamma$ with Nebentypus $\chi$, let $\alpha_f$ be the unique unit root of $X^2 - a_p(f)X + p\chi(p)$, so $U_p$ corresponds to the tuple $(\alpha_f)_{f \in \mathcal{F}_\Gamma}$.

Fix a choice of $\Frob_p \in G_{\Q_p}$ and $\tau \in I_p$ such that $\omega(\tau) \neq 1 \in \F_p$.  We show that for each $f \in \mathcal{F}_\Gamma$ with associated pseudodeformation $D_f$ of $\overline{D}$, we have
\begin{equation}\label{apformula}
\alpha_f^{-1} = \frac{\tr(D_f)(\tau\Frob_p) - \tr(D_f)(\Frob_p)}{(\det(D_f)(\Frob_p))(\varepsilon(\tau)-1)}, 
\end{equation}
which is visibly in the image of the map $R_\Gamma \to \overline{\Z}_p$ coming from $D_f$.  It follows that 
\[
U_p^{-1} = \Phi_\Gamma\left(\frac{\tr(D_\Gamma)(\tau\Frob_p) - \tr(D_\Gamma)(\Frob_p)}{(\det(D_\Gamma)(\Frob_p)) (\varepsilon(\tau)-1)}\right) \in \im \Phi_\Gamma.
\]

Let $f \in \mathcal{F}_\Gamma$ with Nebentypus $\chi$.  We claim that  
\begin{equation}\label{eq:traceonGp}
\tr\rho_f(\sigma) = \lambda(\alpha_f)^{-1}(\sigma)\det \rho_f(\sigma) + \lambda(\alpha_f)(\sigma),
\end{equation}
where for any $x$, $\lambda(x)$ is the unramified character of $G_{\Q_p}$ that sends $\Frob_p$ to $x$.  If $f$ is a cusp form, it is ordinary at $p$ and \eqref{eq:traceonGp} follows from \cite[p. 250]{MazurWiles86}.  If $f$ is an Eisenstein series such that $D_f = \psi(\chi_1 \oplus \chi_2\varepsilon)$ for characters $\chi_i$ of $\Delta$, then \eqref{eq:traceonGp} can be checked directly.  Indeed, in this case $\alpha_f = \chi_1(p)$ and so $\lambda(\alpha_f) = \chi_1|_{G_{\Q_p}}$.  Applying \eqref{eq:traceonGp} to $\sigma = \tau\Frob_p$ and $\sigma = \Frob_p$ and subtracting them yields \eqref{apformula}.
\end{proof}

\begin{remark}
\cref{prop:RtoTsurj} shows that $\TT_\Gamma$ is generated by the Hecke operators away from $p$.  Indeed, by the Chebotarev density theorem and universality, $R_\Gamma$ is topologically generated by $\{\tr(D_\Gamma)(\Frob_\ell) \colon \ell \not\in S\}$.  Thus the image of $\Phi_\Gamma$ is topologically generated by the image of this set, namely $\{T_\ell \colon \ell \nmid Np\}$.  The surjectivity of $\Phi_\Gamma$ gives the result.
\end{remark}

\section{Bootstrapping from level \texorpdfstring{$\Gamma_0(N)$}{}}\label{sec:reductionstep}

 Across this section and \cref{sec:modrepthy}, we show how Wake--Wang-Erickson's result that $\Phi_{\Gamma_0(N)}$ is an isomorphism \cite[Corollary 7.1.3]{WWE} can be leveraged to show that $\Phi_{\Gamma}$ is an isomorphism for the other three choices of $\Gamma$. In \cref{sec:commalglemmas}, we present a pair of abstract commutative algebra lemmas that are used in this deduction.  Precisely how the results of \cref{sec:modrepthy} and the rest of \cref{sec:reductionstep} are used to deduce the main results stated in \cref{sec:introduction} from these abstract  commutative algebra lemmas is explained at the start of \cref{sec:modrepthy}. In \cref{subsec:countingarg}, we count $\Z_p$-ranks of $\TT_{\Gamma_0(N^2)}$, $\TT_{\Gamma_1(N)}$ and $\TT_{\Gamma_1(N^2)}$. Our strategy is to twist by appropriate $p$-power Dirichlet characters to show that counting ranks at levels $\Gamma_0(N^2)$ and $\Gamma_1(N)$ are equivalent, and either of the rank counts at these levels can be used to deduce the rank count at level $\Gamma_1(N^2)$. In \cref{subsec:redstep}, we equip  $R_{\Gamma_1(N)}$ and $R_{\Gamma_1(N^2)}$ with an action of $\Z_p[\Delta]$; this action is given by the diamond operators after pushing forward to the relevant Hecke algebra. For $\Gamma_1(M)$ with $M=N$ or $N^2$,  we also show that, modulo the augmentation ideal of $\Z_p[\Delta]$, the map $\Phi_{\Gamma_1(M)}$ reduces to the map $\Phi_{\Gamma_0(M)}$. 
 

\subsection{Commutative algebra lemmas}\label{sec:commalglemmas}
In this section we establish two abstract lemmas in commutative algebra, both easy consequences of Nakayama's Lemma, that are used repeatedly in reducing from any of the four levels $\Gamma$ to $\Gamma_0(N)$.  

\begin{lemma}\label{lem:NAKReduction}
Let $A$ be a local $\Z_p$-algebra with maximal ideal $\m$ that is $\m$-adically compact.  Fix an augmentation $A \to \Z_p$, and let $\II$ denote its kernel.  Let $\varphi \colon R \to T$ be a surjective morphism of local compact $A$-algebras.  Assume that $T$ is free as an $A$-module and that $\varphi$ induces an isomorphism $R/\II R \cong T/\II T$.  Then $\varphi$ is an isomorphism.
\end{lemma}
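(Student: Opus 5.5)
Since $\varphi$ is surjective, it suffices to show $K \coloneqq \ker\varphi = 0$. We have the exact sequence of $A$-modules
\[
0 \to K \to R \xrightarrow{\varphi} T \to 0.
\]
Because $T$ is free over $A$, this sequence splits as $A$-modules, so $R \cong K \oplus T$ as $A$-modules. Tensoring with $A/\II$ preserves the splitting and gives an exact sequence
\[
0 \to K/\II K \to R/\II R \to T/\II T \to 0.
\]
The hypothesis that $R/\II R \to T/\II T$ is an isomorphism forces $K/\II K = 0$, that is, $K = \II K$.

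To conclude $K=0$ I will apply a version of Nakayama's lemma. The ideal $\II$ lies in $\m$, because the augmentation $A\to\Z_p$ composed with reduction is the residue map of the local ring $A$. Hence $K = \m K$, and so $K/\m K=0$.

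The main obstacle is that $K$ need not be finitely generated over $A$, since $R$ and $T$ are only assumed compact, so the naive Nakayama lemma does not apply directly. I will instead use the topological (compact) version of Nakayama's lemma.

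First, $K$ is a closed submodule of the compact $A$-module $R$, because it is the kernel of a continuous map of compact Hausdorff modules. So $K$ is compact. I would also check that the $A$-module topology on $R$ is compatible with the $\m$-adic topology on $A$, in the sense that $\bigcap_n \m^n K = 0$. Concretely, $R$ is a local compact ring and the image of $\m$ lies in its maximal ideal $\m_R$. So $\m^n K \subseteq \m_R^n R$, and $\bigcap_n \m_R^n = 0$ by compactness and separatedness of the local ring $R$ (as in the paper's setting of complete noetherian local rings).

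Then $K = \m K$ gives $K = \m^n K$ for all $n$. Therefore $K \subseteq \bigcap_n \m^n K = 0$, so $K=0$ and $\varphi$ is an isomorphism.

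If one prefers to avoid this topological point, there is an alternative. In the intended applications $R$ is a quotient of a complete noetherian local ring and $T$ is finite over $\Z_p$. Then $R$ is noetherian, so $K$ is a finitely generated $R$-module, and Nakayama's lemma over $R$ applies directly to $K=\II R\cdot K \subseteq \m_R K$.
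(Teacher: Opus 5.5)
Your proof is correct and is essentially the paper's argument: split $0 \to K \to R \to T \to 0$ using the $A$-freeness of $T$, reduce modulo $\II$ to get $K = \II K \subseteq \m K$, and finish with Nakayama. The only variation is in the last step: the paper cites topological Nakayama for the compact $A$-module $K$, whereas you check it directly via $K = \m^n K \subseteq \m_R^n$ and $\bigcap_n \m_R^n = 0$ in the compact Hausdorff local ring $R$, which is valid because each quotient of $R$ by an open ideal is a finite local ring with nilpotent maximal ideal.
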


\begin{proof}
Let $I = \ker \varphi$, which is a closed ideal of $R$ and hence compact.  Since $T$ is $A$-free, it follows that $\varphi$ admits a splitting as $A$-modules; that is, we have an isomorphism of $A$-modules $R \cong T \oplus I$.  Tensoring with $A/\II$ gives $R/\II R \cong (T/\II T) \oplus (I/\II I)$.  Since $\varphi$ induces an isomorphism $R/\II R \cong T/\II T$, it follows that $I/\II I = 0$.  That is, $\II I = I$, so $\m I = I$.  By topological Nakayama's lemma, $I = 0$ and hence $\varphi$ is an isomorphism \cite[Exercise V.2.6]{Neukirch}.
\end{proof}

We now give a method for establishing the hypothesis of \cref{lem:NAKReduction} that $T$ is a free $A$-module.  It reduces the problem to showing that $T/\II T$ is $\Z_p$-free and counting ranks, the latter of which can be done after base change to $\overline{\Q}_p$.

\begin{lemma}\label{lem:NAKpluscounting}
Let $A$ be a finite free local $\Z_p$-algebra.  Fix an augmentation $A \to \Z_p$ with kernel $\II$.  Let $T$ be a finite local $A$-algebra such that $T$ and $T/\II T$ are free $\Z_p$-modules.  Then $T$ is free as an $A$-module if and only if $\rk_{\Z_p} T = (\rk_{\Z_p} A)(\rk_{\Z_p} T/\II T)$.
\end{lemma}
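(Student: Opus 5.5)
The plan is to prove the two implications separately, using Nakayama's lemma over the local ring $A$ together with a rank count.

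\emph{Forward direction.} Suppose $T \cong A^n$ as an $A$-module. Then $T/\II T \cong (A/\II)^n \cong \Z_p^n$, because the augmentation identifies $A/\II$ with $\Z_p$. Hence $\rk_{\Z_p} T/\II T = n$, and $\rk_{\Z_p} T = n\,\rk_{\Z_p} A$, which is the claimed equality.

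\emph{Converse.} Assume $\rk_{\Z_p} T = (\rk_{\Z_p} A)(\rk_{\Z_p} T/\II T)$, and set $n = \rk_{\Z_p} T/\II T$.

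1. Since $A$ is finite free over $\Z_p$ and local, it is $\m_A$-adically complete. Because $A \to \Z_p$ is surjective, the kernel $\II$ is contained in $\m_A$.

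2. Choose $t_1,\dots,t_n \in T$ whose images form a $\Z_p$-basis of $T/\II T$. Define the $A$-linear map $\pi \colon A^n \to T$ sending the standard basis to the $t_i$.

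3. The map $\pi$ is surjective modulo $\II T$. Since $\II \subseteq \m_A$, it is also surjective modulo $\m_A T$. The module $T$ is finitely generated over $A$, because it is finite over $\Z_p$. So Nakayama's lemma (the ordinary version suffices, as $A$ is local) shows that $\pi$ is surjective.

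4. Let $K = \ker \pi$. Since $T$ is $\Z_p$-free, the sequence $0 \to K \to A^n \to T \to 0$ splits over $\Z_p$. Comparing ranks gives
\[
\rk_{\Z_p} K = n\,\rk_{\Z_p} A - \rk_{\Z_p} T = 0 .
\]
A $\Z_p$-submodule of the torsion-free module $A^n$ with rank $0$ must vanish, so $K = 0$. Thus $\pi$ is an isomorphism and $T$ is $A$-free.

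No step is a real obstacle. The only points needing care are the inclusion $\II \subseteq \m_A$ and the use of $\Z_p$-freeness of $T$ in step 4. The hypothesis that $T/\II T$ is $\Z_p$-free is used only to choose the lift of a basis in step 2.
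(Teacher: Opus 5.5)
Your proof is correct and follows essentially the same route as the paper: use Nakayama's lemma to lift a $\Z_p$-basis of $T/\II T$ to a surjection $A^n \to T$, then conclude it is an isomorphism because both sides are free $\Z_p$-modules of rank $n \cdot \rk_{\Z_p} A$. You also spell out two points the paper leaves implicit: that $\II \subseteq \m_A$, and the rank computation showing the kernel vanishes.
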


\begin{proof}
    The fact that $\rk_{\Z_p} T$ satisfies the given formula when $T$ is $A$-free is immediate.  Conversely, suppose that $\rk_{\Z_p} T = (\rk_{\Z_p} A)(\rk_{\Z_p} T/\II T)$ and set $r = \rk_{\Z_p} T/\II T$.  By Nakayama's Lemma, there is a surjection of $\Z_p[\Delta]$-modules $A^r \to T$.  Both $A^r$ and $T$ are free $\Z_p$-modules of rank $r \cdot (\rk_{\Z_p} A)$, so the map is an isomorphism.
\end{proof}

\subsection{Counting arguments}\label{subsec:countingarg}
In this section we relate the $\Z_p$-ranks of $\TT_{\Gamma_1(N^2)}$ and $\TT_{\Gamma_1(N)}$ to that of $\TT_{\Gamma_0(N^2)}$.  These calculations allow us to use \cref{lem:NAKpluscounting} in the next section to reduce to studying only level $\Gamma_0(N^2)$.  Moreover, calculating ranks can be done over $\overline{\Q}_p$ by counting eigenforms.

The key tool we use is twisting modular forms by Dirichlet characters.  For a modular form $f$ and Dirichlet character $\chi$, one defines the twist of $f$ by $\chi$ as $f \otimes \chi \coloneqq \sum_{n=0}^\infty \chi(n)a_n(f)q^n$.  Its Nebentypus is $\chi^2$ times the Nebentypus of $f$, and its level is the least common multiple of the level of $f$, the square of the conductor of $\chi$, and the product of the conductor of $\chi$ with the conductor of the Nebentypus of $f$ \cite[Proposition 3.64]{Shimura}.  In particular, if $f$ has level $N^2$ with Nebentypus of conductor dividing $N$, then $f \otimes \chi$ has level $N^2$ for any character $\chi$ of conductor dividing $N$.  
Twisting preserves the property of being normalized, and it is easy to check that if $f$ is a Hecke eigenform away from its level, then so is $f \otimes \chi$.  Thus we can define a function 
\begin{align*}
    \alpha \colon \mathcal{F}_{\Gamma_0(N^2)} \times \hat{\Delta} &\to \mathcal{F}_{\Gamma_1(N^2)}\\
    (f, \chi) &\mapsto f \otimes \chi,
\end{align*}
where $\hat{\Delta}$ denotes the group of characters on $\Delta$.  This is well defined since tensoring by a $p$-power order character preserves the property of being Eisenstein modulo a prime above $p$.  

To define an inverse map to $\alpha$, recall that any $g \in \mathcal{F}_{\Gamma_1(N^2)}$ has Nebentypus $\chi_g$ congruent to 1 modulo a prime above $p$ and hence is a character of $\Delta$.  Since $\hat{\Delta}$ is a $p$-group and $p \neq 2$, for any $\chi \in \hat{\Delta}$ there is a unique element in $\hat{\Delta}$ that squares to $\chi$, which we denote by $\chi^{1/2}$.  Define 
\begin{align*}
\beta \colon \mathcal{F}_{\Gamma_1(N^2)} &\to \mathcal{F}_{\Gamma_0(N^2)} \times \hat{\Delta}\\
g &\mapsto (g \otimes \chi_g^{-1/2}, \chi_g^{1/2}).
\end{align*}
The same argument as above shows that $g \otimes \chi_g^{-1/2}$ is a normalized eigenform for $\TT_{\Gamma_1(N^2)}$.  By construction $g \otimes \chi_g^{-1/2}$ has trivial Nebentypus, so $g \otimes \chi_g^{-1/2} \in \mathcal{F}_{\Gamma_0(N^2)}$.

\begin{proposition}\label{prop:countingranksN2}
The $\Z_p$-rank of $\TT_{\Gamma_1(N^2)}$ is equal to $(\rk_{\Z_p} \TT_{\Gamma_0(N^2)})|\Delta|=(\rk_{\Z_p} \TT_{\Gamma_0(N^2)})p^s$.
\end{proposition}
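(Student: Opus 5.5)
The plan is to compute both ranks by counting normalized eigenforms. Since $\TT_\Gamma$ is reduced (\cref{lem:reduced}) and finite free over $\Z_p$, we have $\rk_{\Z_p}\TT_\Gamma = \dim_{\overline{\Q}_p} \TT_\Gamma\otimes\overline{\Q}_p = |\mathcal{F}_\Gamma|$, because minimal primes correspond to $\overline{\Q}_p$-points, i.e.\ to elements of $\mathcal{F}_\Gamma$. The proposition then reduces to the identity $|\mathcal{F}_{\Gamma_1(N^2)}| = |\mathcal{F}_{\Gamma_0(N^2)}|\cdot|\hat\Delta|$, together with $|\hat\Delta| = |\Delta| = p^s$. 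That identity follows once we show that the maps $\alpha$ and $\beta$ defined above are mutually inverse bijections.

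\textbf{Well-definedness.} Elements of $\mathcal{F}_\Gamma$ are determined by their full eigensystem, including the $T_N$-eigenvalue, which is $0$ by \cref{lem:reduced}. First I would check that $\alpha$ really lands in $\mathcal{F}_{\Gamma_1(N^2)}$. For $f \in \mathcal{F}_{\Gamma_0(N^2)}$ and $\chi \in \hat\Delta$, the twist $f\otimes\chi$ has level $N^2$ by the Shimura level formula quoted above. It has $T_\ell$-eigenvalues $\chi(\ell)a_\ell(f)\equiv 1+\ell \bmod \pp$ for $\ell \ne N$, and its $N$-th coefficient is $0$. So it is a normalized eigenform for all Hecke operators (diamonds act by $\chi^2$) lying in the $\m_{\Gamma_1(N^2),0}$-localization. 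Similarly, $\beta(g)$ lies in $\mathcal{F}_{\Gamma_0(N^2)}$, as explained before the statement; the $T_N$-eigenvalue stays $0$ since twisting by a character of conductor $N$ kills the coefficients $a_{Nn}$.

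\textbf{Inverses.} Next, $\beta\circ\alpha(f,\chi)$: the Nebentypus of $f\otimes\chi$ is $\chi^2$, so $\chi_g^{1/2}=\chi$ by uniqueness of square roots in the odd-order group $\hat\Delta$. Hence $\beta(\alpha(f,\chi)) = ((f\otimes\chi)\otimes\chi^{-1},\chi)$. Now $(f\otimes\chi)\otimes\chi^{-1} = f\otimes 1_N$ agrees with $f$ in all coefficients $a_n$ with $N\nmid n$. Since $a_N(f)=0$ and $f$ is a normalized $T_N$-eigenform with eigenvalue $0$, multiplicativity gives $a_{Nn}(f)=a_N(f)a_n(f)=0$ for all $n$, and the constant term is likewise handled because two elements of $\mathcal{F}$ with the same Hecke eigenvalues coincide. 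Thus the composite is the identity. The same reasoning shows $\alpha\circ\beta(g) = g\otimes 1_N = g$.

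\textbf{Main obstacle.} The one subtle point is that the elements of $\mathcal{F}$ are determined by their Hecke eigenvalues, including $T_N$. That is, distinct normalized eigenforms in $\mathcal{F}_\Gamma$ give distinct minimal primes, so there are no multiplicities in the count. This is exactly the role of including $T_N$ in the level-$N^2$ Hecke algebras and of \cref{prop:stabilizesoTN=0}, which removes the oldform duplication. Granting that bijection, the cardinalities give $\rk_{\Z_p}\TT_{\Gamma_1(N^2)} = |\mathcal{F}_{\Gamma_0(N^2)}|\,p^s = (\rk_{\Z_p}\TT_{\Gamma_0(N^2)})\,p^s$.
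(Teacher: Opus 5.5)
Your proposal is correct and follows the paper's proof: you reduce to counting $\overline{\Q}_p$-eigensystems, then check that $\alpha$ and $\beta$ are mutually inverse bijections between $\mathcal{F}_{\Gamma_0(N^2)}\times\hat{\Delta}$ and $\mathcal{F}_{\Gamma_1(N^2)}$. Your checks of well-definedness and of $f\otimes 1_N = f$, via the vanishing of $T_N$, supply the details the paper calls immediate.
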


\begin{proof}
    Since $\TT_{\Gamma}$ is $\Z_p$-free, we can compute ranks after tensoring to $\overline{\Q}_p$, over which ranks count Hecke eigensystems.  
    Thus it suffices to show that there is a bijection between $\mathcal{F}_{\Gamma_0(N^2)} \times \hat{\Delta}$ and $\mathcal{F}_{\Gamma_1(N^2)}$.
It is immediate to check that the maps $\alpha$ and $\beta$ defined above are inverse maps, which proves the proposition.
\end{proof}

The functions $\alpha$ and $\beta$ do not restrict to bijections between $\mathcal{F}_{\Gamma_0(N)} \times \hat{\Delta}$ and $\mathcal{F}_{\Gamma_1(N)}$.  However, we will see that it is possible to relate $\mathcal{F}_{\Gamma_1(N)}$ with $\mathcal{F}_{\Gamma_0(N^2)}$ via twisting.  There is a natural injection $\iota \colon \mathcal{F}_{\Gamma_1(N)} \to \mathcal{F}_{\Gamma_1(N^2)}$ given by $f \mapsto f_0 = f \otimes 1_N$ given by \cref{prop:stabilizesoTN=0}.  We suppress $\iota$ and view $\beta \colon \mathcal{F}_{\Gamma_1(N)} \to \mathcal{F}_{\Gamma_0(N^2)} \times \hat{\Delta}$ as an injection given by $g \mapsto (g \otimes \chi_{g}^{-1/2}, \chi_g^{1/2})$.  The strategy is to compute $\# \mathcal{F}_{\Gamma_1(N)}$ by computing the image of $\beta$.  

To do so, it is useful to know when $f \otimes \chi$ is a newform of level $N^2$ and when it arises in level $N$.  Certainly $\alpha(\mathcal{F}_{\Gamma_0(N)} \times \{1\}) = \mathcal{F}_{\Gamma_0(N)}$, so assume that $f \in \mathcal{F}_{\Gamma_0(N^2)}$ is a cusp form that is new of level $N^2$.  For which values of $\chi$ is there a form $g \in \mathcal{F}_{\Gamma_1(N)}$ such that $g_0 = f \otimes \chi$?  Recall that such an $f$ is either ramified principal series or supercuspidal at $N$.  The next lemma shows that the latter case does not occur when $f$ is congruent to $E_{2,N}$ away from $N$.

\begin{lemma}\label{lem:EisnotSteinberg}
Let $f \in S_2(\Gamma_1(N^2))$ be a primitive form that is congruent to $E_{2, N}$ modulo $\pp$ away from $N$.  Then $f$ is not supercuspidal at $N$, so $f$ is ramified principal series at $N$.
\end{lemma}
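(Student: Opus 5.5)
We argue through local--global compatibility at $N$ (Carayol) and a study of the residual representation restricted to inertia at $N$. Suppose for contradiction that $f$ is supercuspidal at $N$. Since $f$ has conductor $N^2$ and $N$ is odd, the local component $\pi_{f,N}$ is a depth-zero supercuspidal representation. By Carayol's theorem, the $\pp$-adic Galois representation then satisfies
\[
\rho_f|_{G_{\Q_N}} \cong \Ind_{G_{\Q_{N^2}}}^{G_{\Q_N}} \theta ,
\]
where $\Q_{N^2}$ denotes the unramified quadratic extension of $\Q_N$. Here $\theta$ is a tamely ramified character whose restriction to inertia factors through $\F_{N^2}^\times$. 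Irreducibility (supercuspidality) means that $\theta|_{I_N}$ does not factor through the norm map $\F_{N^2}^\times \to \F_N^\times$; equivalently $\theta \neq \theta^{\Frob_N}$ on inertia.

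Next we compute the residual inertial type. The congruence with $E_{2,N}$ away from $N$ gives $\tr \rho_f(\Frob_\ell) \equiv 1+\ell \bmod \pp$ for all primes $\ell \nmid Np$. By Chebotarev and Brauer--Nesbitt, exactly as in \cref{lem:conglevelN}, we get $\bar\rho_f^{ss} \cong 1 \oplus \omega$. Since $\omega$ is unramified at $N$, $\bar\rho_f^{ss}|_{I_N}$ is trivial. On the other hand, $\rho_f|_{I_N} \cong \theta|_{I_N} \oplus \theta^{N}|_{I_N}$, viewing tame inertia characters as characters of $\F_{N^2}^\times$. Hence $\bar\theta|_{I_N} = 1$, so $\theta|_{I_N}$ has $p$-power order.

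The main step is the arithmetic observation that $p$-power order characters of $\F_{N^2}^\times$ factor through the norm. Indeed, $N^2-1 = (N-1)(N+1)$ and $N+1 \equiv 2 \not\equiv 0 \bmod p$. Therefore the $p$-Sylow subgroup of $\F_{N^2}^\times$ has the same order as that of $\F_N^\times$. Since the norm $\F_{N^2}^\times \to \F_N^\times$ is surjective with kernel of order $N+1$, prime to $p$, the $p$-part of any character of $\F_{N^2}^\times$ factors through the norm. So $\theta|_{I_N} = \chi\circ \mathrm{Nm}$ for some character $\chi$ of $\F_N^\times$. Then $\theta \cdot (\chi^{-1}\circ\mathrm{Nm})$ is unramified, and $\theta = \theta^{\Frob_N}$ on inertia. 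Extending $\chi$ to a character of $G_{\Q_N}$ via local class field theory, the induced representation becomes $\chi \otimes \Ind(\text{unramified character})$, which is reducible. This contradicts supercuspidality.

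It remains to conclude that $f$ is ramified principal series. A newform of level $N^2$ is either ramified principal series, a twist of Steinberg by a ramified character, or supercuspidal. The supercuspidal case is now excluded. For the twisted Steinberg case I expect the argument to go as follows. Here $\rho_f|_{I_N}$ is a nonsplit extension of $\chi|_{I_N}$ by itself, and the same residual argument forces $\chi$ to have $p$-power order, so that it factors through $\Delta$. One then has to check that this case is either absorbed into the statement or ruled out by the form of the Nebentypus; I expect this bookkeeping, rather than the supercuspidal argument, to need the most care. The heart of the proof, however, is the norm-factorization step above, which uses precisely $N \equiv 1 \bmod p$ and $p \neq 2$.
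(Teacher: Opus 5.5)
Your exclusion of the supercuspidal case is correct, but it argues differently from the paper. The paper does no conductor analysis. It writes $\rho_f|_{G_{\Q_N}} \cong \Ind_K^{\Q_N}\eta$ for an arbitrary quadratic $K/\Q_N$ and notes that residual reducibility forces $\bar\eta = \bar\eta^\sigma$. Hence $\Ind_K^{\Q_N}\bar\eta \cong \bar\eta' \oplus \chi_K\bar\eta'$ has trace $0$ off $G_K$, whereas $\bar\rho_f^{\mathrm{ss}}|_{G_{\Q_N}} = 1 \oplus \omega|_{G_{\Q_N}} = 1 \oplus 1$ has trace $2$ everywhere because $N \equiv 1 \bmod p$.

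You instead use the conductor to reduce to a depth-zero supercuspidal induced from the unramified quadratic extension by a tame regular $\theta$. You then look only at inertia, where $\bar\rho_f^{\mathrm{ss}}$ is trivial for every $N$, and use $p \nmid N+1$ to push $\theta|_{I_N}$ through the norm. The paper's route is shorter and needs no classification of conductor-$N^2$ supercuspidals. Yours uses only $N \not\equiv -1 \bmod p$, and it shows exactly where the argument must break when $N \equiv -1 \bmod p$: there $\F_{N^2}^\times$ has $p$-power order characters not factoring through the norm.

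The genuine gap is the second clause, which you leave as ``bookkeeping''. In the stated generality it cannot be proved. Let $g \in S_2(\Gamma_0(N))$ be a cuspidal newform congruent to $E_{2,N}$ modulo $\pp$ away from $N$; these exist by Mazur's work since $p \geq 5$ divides $N-1$. Take $1 \neq \chi \in \hat{\Delta}$. Then:
\begin{itemize}
\item $g \otimes \chi$ is a primitive form of level $N^2$ with Nebentypus $\chi^2 \neq 1$;
\item it is congruent to $E_{2,N}$ modulo $\pp$ away from $N$, because $\chi \equiv 1 \bmod \pp$;
\item at $N$ it is a ramified twist of Steinberg, so it is neither supercuspidal nor principal series.
\end{itemize}

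So ``ramified principal series'' needs trivial Nebentypus. That is the only setting in which the paper uses the lemma (new forms in $\mathcal{F}_{\Gamma_0(N^2)}$). The paper's own proof also establishes only non-supercuspidality and takes the dichotomy ``ramified principal series or supercuspidal'' from the discussion preceding the lemma.

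Under that hypothesis your sketch closes in one line. If $\rho_f|_{I_N}$ is a nonsplit extension of $\chi|_{I_N}$ by itself, then $\chi^2|_{I_N} = \det \rho_f|_{I_N} = \chi_f|_{I_N} = 1$. Since $\chi|_{I_N}$ has $p$-power order and $p$ is odd, $\chi|_{I_N} = 1$. So $f$ would be an unramified twist of Steinberg, of conductor $N$, contradicting that $f$ is new of level $N^2$.
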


\begin{proof}
Suppose that $f$ is supercuspidal at $N$.  By the local Langlands correspondence, $\rho_f|_{G_{\Q_N}} \cong \Ind_K^{\Q_N} \eta$ for a quadratic extension $K/\Q_N$ and character $\eta$ on $G_K$ such that $\eta^\sigma \neq \eta$, where $1 \neq \sigma \in \Gal{K}{\Q_N}$ and $\eta^\sigma(g) \coloneqq \eta(\sigma g \sigma^{-1})$.  As $f$ is congruent to $E_{2,N}$ modulo $\pp$ away from $N$, $\rho_f|_{G_{\Q_N}}$ is reducible modulo $\pp$; that is, $\eta \equiv \eta^\sigma \bmod \pp$.

It follows that $\bar{\eta} := \eta \bmod \pp$ admits an extension $\bar{\eta}'$ to $G_{\Q_N}$, and the only other extension is $\chi_{K}\bar{\eta}'$, where $\chi_K$ is the quadratic character associated with $K/\Q_N$.  Then we must have that
\[
\Ind_K^{\Q_N} \bar{\eta} = \bar{\eta}' \oplus  \chi_K\bar{\eta}'.
\]
In particular, since $\chi_K$ is nontrivial, it follows that $\tr \rho_f|_{G_{\Q_N}} = \tr \Ind_K^{\Q_N} \eta \equiv \bar{\eta}' + \chi_K\bar{\eta}' \bmod \pp$ is not identically equal to $2 = \tr(1 \oplus \omega)|_{G_{\Q_N}}$. This contradicts the assumption that $f$ is congruent to $E_{2,N}$ modulo $\pp$ away from $N$. 
\end{proof}

\cref{lem:EisnotSteinberg} ensures that any cusp form $f \in \mathcal{F}_{\Gamma_0(N^2)}$ that is new at level $N^2$ is ramified principal series at $N$, so
\[
\rho_f|_{I_N} \sim \begin{pmatrix}
\eta & 0\\
0 & \eta^{-1}
\end{pmatrix}
\]
for some $\eta \in \hat{\Delta}$.  For any $\chi \in \hat{\Delta}$, write $(f \otimes \chi)^{\prim}$ for the primitive form of level dividing $N^2$ that has the same eigenvalues as $f \otimes \chi$ away from $N$.  We have
\[
\rho_{(f \otimes \chi)^{\prim}}|_{I_N} \sim \begin{pmatrix}
\eta\chi & 0\\
0 & \eta^{-1}\chi
\end{pmatrix}.
\]
The key fact is that $(f \otimes \chi)^{\prim}$ has level $N$ if and only if either $\eta\chi = 1$ or $\eta^{-1}\chi = 1$ \cite[\S 2.3]{LoefflerWeinstein}.  


\begin{proposition}\label{prop:pisurjective}
Every $f \in \mathcal{F}_{\Gamma_0(N^2)}$ is of the form $g \otimes \chi_g^{-1/2}$ for some $g \in \mathcal{F}_{\Gamma_1(N)}$.  If $f \in \mathcal{F}_{\Gamma_0(N)}$ then there is a unique such $g$; otherwise, there are exactly two such $g$'s.
\end{proposition}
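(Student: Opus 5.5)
We determine the fibres of the injection $\beta \colon \mathcal{F}_{\Gamma_1(N)} \to \mathcal{F}_{\Gamma_0(N^2)} \times \hat{\Delta}$ (with $\iota$ suppressed) after projecting to the first factor. Concretely, given $f \in \mathcal{F}_{\Gamma_0(N^2)}$, we count the $\chi \in \hat{\Delta}$ for which $f \otimes \chi = g_0$ for some $g \in \mathcal{F}_{\Gamma_1(N)}$. For such a $g$, the form $g_0 \otimes \chi_{g}^{-1/2}$ has trivial Nebentypus. Since $\beta\circ\alpha$ is the identity, the preimages of $f$ correspond exactly to those $\chi$ with $\alpha(f,\chi)=f\otimes\chi \in \iota(\mathcal{F}_{\Gamma_1(N)})$. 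So the task is to compute, for each $f$, the set $X_f=\{\chi\in\hat\Delta : f\otimes\chi \text{ comes from level } N\}$ and show it has size $1$ or $2$ as claimed. Here "comes from level $N$" means $f\otimes \chi = h\otimes 1_N$ for an eigenform $h$ of level $\Gamma_1(N)$, equivalently $(f\otimes\chi)^{\prim}$ has level dividing $N$. This uses \cref{prop:stabilizesoTN=0} and \cref{lem:reduced}, since $T_N=0$ pins down $f\otimes\chi$ among forms with given prime-to-$N$ eigenvalues.

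We split into three cases. \emph{Case 1: $f \in \mathcal{F}_{\Gamma_0(N)}$}, meaning $f=h_0$ with $h$ of level $\Gamma_0(N)$, either $E_{2,N}$ or a cusp form that is Steinberg at $N$. Twisting by $\chi\neq 1$ produces something ramified at $N$ on both inertia characters, namely $\chi\oplus\chi$ up to the unipotent part. Hence $f\otimes\chi$ is either a newform of level $N^2$ or, in the Eisenstein case, $E_{\chi,\chi}$, which has $c_1c_2=N^2$. In neither case does it come from level $N$, so $X_f=\{1\}$. \emph{Case 2: $f$ a cusp newform of level $N^2$}. By \cref{lem:EisnotSteinberg}, $f$ is ramified principal series with $\rho_f|_{I_N}\sim\eta\oplus\eta^{-1}$ and $\eta\neq 1$. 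By the key fact from \cite{LoefflerWeinstein} quoted above, $X_f=\{\eta,\eta^{-1}\}$, which has two elements because $\eta^2\ne 1$ ($p$ odd). \emph{Case 3: $f=E_{\chi_1,\chi_1^{-1}}$ with $\chi_1\ne1$}, using \cref{lem:conglevelN2} and \cref{rem:EisNeben}. Here $f\otimes\chi=E_{\chi_1\chi,\chi_1^{-1}\chi}$, and this comes from level $N$ exactly when one of $\chi_1\chi$, $\chi_1^{-1}\chi$ is trivial. In that case it equals $(E_{1,\chi'})_0$ or $(E_{\chi',1})_0$, as one checks on $q$-expansions away from $N$ together with $T_N=0$. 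This again gives exactly two values $\chi=\chi_1^{\mp1}$.

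We should also confirm that these cases exhaust $\mathcal{F}_{\Gamma_0(N^2)}$. This follows from the basis description in the proof of \cref{lem:reduced}: a level-$N$ old form with trivial Nebentypus has level $\Gamma_0(N)$. Since $\beta$ is injective, distinct $\chi\in X_f$ give distinct $g$, so the number of $g$ equals $|X_f|$. Surjectivity follows because $X_f$ is always nonempty.

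The main obstacle is Case 2, namely justifying the cited criterion precisely. We must ensure that $(f\otimes\chi)^{\prim}$ having level $N$ really yields an element $g\in\mathcal{F}_{\Gamma_1(N)}$ with $g_0=f\otimes\chi$ in the $\m$-localized space. To do this, we check that $f\otimes\chi$ has $T_N$-eigenvalue $0$: it is a twist by a character ramified at $N$, so its $N$-th coefficient vanishes. Together with \cref{prop:stabilizesoTN=0}(iii), this identifies $f\otimes\chi$ with $((f\otimes\chi)^{\prim})_0$. We must also check that the Eisenstein congruence is preserved under twisting.
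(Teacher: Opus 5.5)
Your proposal is correct and follows essentially the same route as the paper: you use the same three-way split into forms coming from $\Gamma_0(N)$, cuspidal newforms of level $N^2$ (handled via \cref{lem:EisnotSteinberg} and the Loeffler--Weinstein criterion $\chi \in \{\eta, \eta^{-1}\}$), and the Eisenstein series $E_{\chi_1,\chi_1^{-1}}$, which are checked by hand on $q$-expansions. The differences are cosmetic. You phrase the count as the fibre $X_f$ of $\beta$, and you rule out extra preimages of old forms by twisting $f$ forward (a Steinberg form twisted by a ramified $\chi$ has conductor $N^2$), whereas the paper twists a hypothetical ramified principal series $h$ back. You also make explicit that $\eta \neq \eta^{-1}$ because $p$ is odd, which the paper leaves implicit.
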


\begin{proof}
Let $f \in \mathcal{F}_{\Gamma_0(N^2)}$.  If $f$ is a cusp form that is new at level $N^2$ then \cref{lem:EisnotSteinberg} ensures that $f$ is ramified principal series at $N$.  Let $\eta \in \hat{\Delta}$ be a character such that $\tr \rho_f|_{I_N} = \eta + \eta^{-1}$; note that $\eta \neq 1$ since $\rho_f$ is ramified at $N$.  By the discussion preceding this proposition, we see that $(f \otimes \chi)^{\prim}$ has level $N$ if and only if $\chi \in \{\eta, \eta^{-1}\}$.  Thus $(f \otimes \eta)^{\prim}$ and $(f \otimes \eta^{-1})^{\prim}$ are the unique forms $g$ satisfying $g \otimes \chi_g^{-1/2} = f$.  

Suppose that $f$ is a cusp form that is not new at level $N^2$.  By \cref{prop:stabilizesoTN=0}, $f = g \otimes 1_N$ for some $g \in \mathcal{F}_{\Gamma_0(N)}$.  To see that $g$ is unique, note that $g$ is the unique element $h \in \mathcal{F}_{\Gamma_0(N)}$ such that $h_0 = f$.  Any $h \in \mathcal{F}_{\Gamma_1(N)} \setminus \mathcal{F}_{\Gamma_0(N)}$ is ramified principal series at $N$, hence so is any twist of $h$.  If there were $\chi \in \hat{\Delta}$ such that $h \otimes \chi = f = g_0 = g \otimes 1$, then $g$ would be ramified principal series.  But $g$ is Steinberg at $N$, a contradiction; so $g$ is unique.

If $f \in \mathcal{F}_{\Gamma_0(N^2)}$ is an Eisenstein series, then it is easy to check the claim by hand.  Namely $E_{1,1} = E_{2,N} \otimes 1_N$ and for $1 \neq \chi \in \hat{\Delta}$, the forms $E_{1,\chi}, E_{\chi^{-1},1} \in \mathcal{F}_{\Gamma_1(N)}$ are the two forms $g$ such that $g \otimes \chi_g^{-1/2} = E_{\chi^{-1/2}, \chi^{1/2}}$.
\end{proof}

\begin{corollary}\label{cor:countingranksequivalence}
We have $\rk_{\Z_p} \TT_{\Gamma_1(N)} = 2\rk_{\Z_p} \TT_{\Gamma_0(N^2)} - \rk_{\Z_p} \TT_{\Gamma_0(N)}$.  Thus the following are equivalent:
\begin{enumerate}[label=(\roman*),leftmargin=*]
\item $\rk_{\Z_p} \TT_{\Gamma_0(N^2)} = \bigl(\frac{|\Delta| + 1}{2}\bigr)\rk_{\Z_p} \TT_{\Gamma_0(N)}=\bigl(\frac{p^s + 1}{2}\bigr)r$;
\item $\rk_{\Z_p} \TT_{\Gamma_1(N)} = |\Delta|\rk_{\Z_p} \TT_{\Gamma_0(N)} = p^sr$.
\end{enumerate}
\end{corollary}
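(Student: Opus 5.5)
The rank formula will come from counting eigensystems. Each $\TT_\Gamma$ is $\Z_p$-free and reduced by \cref{lem:reduced}, so $\rk_{\Z_p}\TT_\Gamma = \#\mathcal{F}_\Gamma$. It therefore suffices to show
\[
\#\mathcal{F}_{\Gamma_1(N)} = 2\#\mathcal{F}_{\Gamma_0(N^2)} - \#\mathcal{F}_{\Gamma_0(N)}.
\]

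Consider the map $\pi\colon \mathcal{F}_{\Gamma_1(N)} \to \mathcal{F}_{\Gamma_0(N^2)}$ given by $g \mapsto g\otimes\chi_g^{-1/2}$, where we first pass to $g_0 = g\otimes 1_N$ via $\iota$. This map is the first coordinate of $\beta$.

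\begin{itemize}[leftmargin=*]
\item Every $f \in \mathcal{F}_{\Gamma_0(N^2)}$ lies in the image of $\pi$ by \cref{prop:pisurjective}.
\item The fibre of $\pi$ over $f$ has size $1$ if $f$ lies in the image of $\mathcal{F}_{\Gamma_0(N)}$ under $h\mapsto h_0$, and size $2$ otherwise.
\end{itemize}

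Summing fibre sizes gives $\#\mathcal{F}_{\Gamma_1(N)} = 2\#\mathcal{F}_{\Gamma_0(N^2)} - \#\mathcal{F}_{\Gamma_0(N)}$.

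We should check one bookkeeping point: the set of "$f\in\mathcal{F}_{\Gamma_0(N)}$" in \cref{prop:pisurjective} must be identified with a subset of $\mathcal{F}_{\Gamma_0(N^2)}$ of cardinality exactly $\#\mathcal{F}_{\Gamma_0(N)}$. The identification is via $h\mapsto h_0$. This map is injective because $h_0$ determines the prime-to-$N$ eigenvalues of $h$, and $h$ has $T_N$-eigenvalue $1$ in $M_2(\Gamma_0(N))_{\m}$ for the relevant $h$ (Eisenstein or $a_N=\pm1$; congruence to $E_{2,N}$ forces the same eigensystem). The image consists of the non-new forms together with $E_{1,1}$, which is exactly the "unique $g$" case in the proof of \cref{prop:pisurjective}.

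For the equivalence, set $r=\rk\TT_{\Gamma_0(N)}$ and substitute into the formula. If $\rk\TT_{\Gamma_0(N^2)} = \frac{p^s+1}{2}r$, then
\[
\rk\TT_{\Gamma_1(N)} = (p^s+1)r - r = p^s r.
\]
Conversely, if $\rk\TT_{\Gamma_1(N)}=p^sr$, solving gives $\rk\TT_{\Gamma_0(N^2)} = \frac{p^sr+r}{2}$. This is a linear relation, so both directions are immediate.

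The only step needing care is the fibre count and the identification of $\mathcal{F}_{\Gamma_0(N)}$ inside $\mathcal{F}_{\Gamma_0(N^2)}$. Everything else is arithmetic.
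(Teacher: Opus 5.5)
Your proposal is correct and is essentially the paper's proof: the paper counts the image of the injective map $\beta\colon \mathcal{F}_{\Gamma_1(N)} \to \mathcal{F}_{\Gamma_0(N^2)}\times\hat{\Delta}$ using \cref{prop:pisurjective}, and that count is the same as your sum of fibre sizes of its first coordinate $\pi$. Your extra check that $h\mapsto h_0$ is injective on $\mathcal{F}_{\Gamma_0(N)}$ is fine, though it needs no appeal to $T_N$-eigenvalues, since it already follows from $h_0$ and $h$ sharing all prime-to-$N$ eigenvalues; and, as in the paper, the equivalence of (i) and (ii) is then just substitution into the linear relation.
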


\begin{proof}
As in the proof of \cref{prop:countingranksN2}, we can compute ranks after tensoring to $\overline{\Q}_p$ and simply count Hecke eigensystems.  It suffices to show that the size of the image of the injective map $\beta \colon \mathcal{F}_{\Gamma_1(N)} \to \mathcal{F}_{\Gamma_0(N^2)} \times \hat{\Delta}$ has size $2\#\mathcal{F}_{\Gamma_0(N^2)} - \#\mathcal{F}_{\Gamma_0(N)}$, which follows from \cref{prop:pisurjective}.  The equivalence follows directly from the first equation.
\end{proof}

\subsection{The reduction step}\label{subsec:redstep}
In this section, let $M = N$ or $N^2$.  The $\Z_p$-algebra generated by the diamond operators of $p$-power order is isomorphic to $\Z_p[\Delta]$.  We wish to apply \cref{lem:NAKReduction} with $A = \Z_p[\Delta]$, which carries a natural augmentation $\Z_p[\Delta] \twoheadrightarrow \Z_p$ with kernel $\II$.  The following standard argument shows that $R_{\Gamma_1(M)}$ is a $\Z_p[\Delta]$-module.  

\begin{lemma}\label{lem:ZpDeltAlg}
The ring $R_{\Gamma_1(M)}$ is naturally a $\Z_p[\Delta]$-algebra, and $R_{\Gamma_1(M)}/\II R_{\Gamma_1(M)} \cong R_{\Gamma_0(M)}$.
\end{lemma}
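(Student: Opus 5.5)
The $\Z_p[\Delta]$-algebra structure will come from the determinant of the universal pseudodeformation, and the quotient statement from the fact that killing $\II$ forces the determinant to equal $\varepsilon$.

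\textbf{The algebra structure.} Write $D = D_{\Gamma_1(M)}$. The character $\det(D)\varepsilon^{-1} \colon G_{\Q,S} \to R_{\Gamma_1(M)}^\times$ reduces to $\omega\cdot\omega^{-1} = 1$ modulo the maximal ideal, since $\det\overline{D} = \omega$. Its values are therefore in $1 + \m_{R}$, which is a pro-$p$ group. Hence $\det(D)\varepsilon^{-1}$ factors through the maximal pro-$p$ abelian quotient of $G_{\Q,S}$.

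\textbf{Factoring through $\Delta$.} Next I would show that this character factors through $\Delta$, using the finite flat condition at $p$.
\begin{itemize}
\item The determinant of a finite flat pseudorepresentation is finite flat. In the framework of \cite{WWE2019}, the determinant of a Cayley--Hamilton module that is finite flat is the determinant of a finite flat group scheme, and a $1$-dimensional finite flat character deforming $\omega$ is $\varepsilon$ times an unramified character.
\item So $\det(D)\varepsilon^{-1}$ is unramified at $p$. It is also unramified outside $Np$, and it is pro-$p$.
\item By class field theory, the maximal pro-$p$ abelian extension of $\Q$ unramified outside $N$ is the $p$-part of $\Q(\zeta_{N^\infty})$. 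Tame ramification at $N$ cuts this down to $K$, since the $N$-part is wild and pro-$N$. Hence the character factors through $\Delta$.
\end{itemize}
This gives a homomorphism $\Delta \to R_{\Gamma_1(M)}^\times$, and hence, by continuity and completeness, a $\Z_p$-algebra map $\Z_p[\Delta] \to R_{\Gamma_1(M)}$. Concretely, $[\delta] \mapsto \det(D)(\sigma)\varepsilon(\sigma)^{-1}$ for any $\sigma$ lifting $\delta$.

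\textbf{Compatibility with the diamond operators.} Under $\Phi_{\Gamma_1(M)}$, we have $\det(D_{\TT})(\Frob_\ell) = \ell\langle\ell\rangle$. So $[\ell] \mapsto \langle \ell\rangle$ after projecting to $p$-power order diamond operators, as desired.

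\textbf{The quotient.} Modding out by $\II = \langle [\delta]-1\rangle$ imposes exactly $\det(D) = \varepsilon$ on all of $G_{\Q,S}$. For $M = N^2$, $R_{\Gamma_0(N^2)}$ is by definition the quotient of $R_{\Gamma_1(N^2)}$ by $\det(D^{\univ}) - \varepsilon$. So $R_{\Gamma_1(N^2)}/\II R_{\Gamma_1(N^2)} \cong R_{\Gamma_0(N^2)}$.

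For $M = N$, the quotient $R_{\Gamma_1(N)}/\II$ parametrizes pseudodeformations satisfying \ref{ff}, \ref{sqfree} and \ref{fixdet}. I must check that this agrees with $R_{\Gamma_0(N)}$, which imposes \ref{ff}, \ref{fixdet} and \ref{uoS}.
\begin{itemize}
\item Given \ref{sqfree}, we have $D|_{I_N} = \psi(\chi\oplus 1)$, so $\det D|_{I_N} = \chi$.
\item \ref{fixdet} then forces $\chi = \varepsilon|_{I_N} = 1$, which is \ref{uoS}.
\item Conversely, \ref{uoS} implies \ref{sqfree}.
\end{itemize}
Comparing the two functors on Artinian rings gives $R_{\Gamma_1(N)}/\II R_{\Gamma_1(N)} \cong R_{\Gamma_0(N)}$.

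\textbf{Main obstacle.} The delicate step is verifying that finite flatness of $D$ at $p$ in the sense of \cite{WWE2019} implies that $\det(D)\varepsilon^{-1}$ is unramified at $p$. If this is awkward to extract directly, I would argue instead via restriction to $I_p$: $D|_{G_{\Q_p}}$ is finite flat with residual pseudorepresentation $\psi(\omega\oplus 1)$. Its determinant is then the determinant of a rank-$2$ finite flat group scheme, namely $\varepsilon$ times an unramified character, by the Cartier duality / Fontaine–Laffaille argument used in \cite[\S 5]{WWE2019}.
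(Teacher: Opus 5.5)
Your proposal is correct and follows essentially the paper's argument: the paper packages your character $\det(D)\varepsilon^{-1}$ as the map out of the universal finite-flat deformation ring of $\omega$, which it identifies with $\Z_p[\Delta]$ via $G_{\Q,S}^{\ab,\mathrm{pro-}p} \cong (1+p\Z_p)\times\Delta$, and then notes that killing $\II$ forces $\det = \varepsilon$. Your explicit check for $M=N$ that \ref{sqfree} together with \ref{fixdet} forces \ref{uoS} (since $\varepsilon$ is unramified at $N$) fills in a step the paper leaves implicit.
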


\begin{proof}
We show that $\Z_p[\Delta]$ is the universal finite flat-at-$p$ deformation ring of $\omega = \det(\overline{D})$.  Since $\det(D_{\Gamma_1(M)})$ is a finite flat-at-$p$ deformation of $\omega$, by universality we obtain the desired ring homomorphism $\Z_p[\Delta] \to R_{\Gamma_1(M)}$.

The universal deformation ring of a character $\chi \colon G \to \F_p^\times$ is given by $\Z_p\lb G^{\ab,\mathrm{pro-}p} \rb$, and the universal deformation is the Teichmüller lift of $\chi$ times the tautological character.  In our case, by class field theory we compute that 
\[
G_{\Q, S}^{\ab, \mathrm{pro-}p} = \Gal{\Q(\mu_{p^\infty}, \mu_{N^\infty})}{\Q}^{\mathrm{pro-}p} \cong (\Z_p^\times \times \Z_N^\times)^{\mathrm{pro-}p} = (1 + p\Z_p) \times \Delta.
\]
Recall that the only finite flat deformations of $\omega$ are unramified characters times $\varepsilon$ so finite flat characters are forced to be the pro-$p$ part of $\varepsilon$ on $1+p\Z_p$.  Therefore the quotient of $\Z_p\lb (1+p\Z_p) \times \Delta \rb$ that parametrizes finite flat-at-$p$ deformations of $\omega$ is $\Z_p[\Delta]$, as claimed.

The last claim follows from the fact that the quotient by $\II$ exactly forces the universal pseudorepresentation to have determinant equal to $\varepsilon$, which is the defining condition of $R_{\Gamma_0(M)}$.
\end{proof}

\begin{remark}\label{rem:diamondop}
The image of $\Z_p[\Delta]$ under $\Phi_\Gamma$ is the $\Z_p$-subalgebra of $\TT_\Gamma$ generated by $\{\langle d \rangle \colon d \in \Delta\}$.
\end{remark}

We now show that $\Phi_{\Gamma_1(M)}$ modulo $\II$ is an isomorphism, which is needed to apply \cref{lem:NAKReduction}.  We do this assuming that $\Phi_{\Gamma_0(M)}$ is an isomorphism, which is known when $M = N$ \cite[Corollary 7.1.3]{WWE} and is \cref{thm:RisomT} for $M = N^2$.  

\begin{proposition}\label{prop:isomodI}
    Assume that $\Phi_{\Gamma_0(M)}$ is an isomorphism.  Then $\Phi_{\Gamma_1(M)}$ induces isomorphisms 
    \[
    R_{\Gamma_0(M)} \cong R_{\Gamma_1(M)}/\II R_{\Gamma_1(M)} \cong \TT_{\Gamma_1(M)}/\II \TT_{\Gamma_1(M)} \cong \TT_{\Gamma_0(M)}.
    \]
\end{proposition}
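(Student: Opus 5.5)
The plan is a commutative diagram argument. Consider the square
\[
\xymatrix{
R_{\Gamma_1(M)} \ar@{->>}[d] \ar[r]^{\Phi_{\Gamma_1(M)}} & \TT_{\Gamma_1(M)} \ar@{->>}[d] \\
R_{\Gamma_0(M)} \ar[r]_{\Phi_{\Gamma_0(M)}} & \TT_{\Gamma_0(M)}
}
\]
where the left vertical map is the quotient map and the right vertical map is the restriction map of \cref{rem:Heckealgobservations}. The square commutes because both composites send $\tr(D_{\Gamma_1(M)})(\Frob_\ell)$ to $T_\ell$ for $\ell \nmid Np$, and these elements topologically generate $R_{\Gamma_1(M)}$.

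The first isomorphism $R_{\Gamma_0(M)} \cong R_{\Gamma_1(M)}/\II R_{\Gamma_1(M)}$ is \cref{lem:ZpDeltAlg}. Next I would check that the right vertical map kills $\II \TT_{\Gamma_1(M)}$. By \cref{rem:diamondop}, $\Phi_{\Gamma_1(M)}$ is $\Z_p[\Delta]$-linear, with $[\delta]$ acting on $\TT_{\Gamma_1(M)}$ via diamond operators. On $\TT_{\Gamma_0(M)}$ all diamond operators act trivially, so $\langle d\rangle - 1 \mapsto 0$. We therefore get a commutative square of surjections
\[
R_{\Gamma_1(M)}/\II R_{\Gamma_1(M)} \twoheadrightarrow \TT_{\Gamma_1(M)}/\II \TT_{\Gamma_1(M)} \twoheadrightarrow \TT_{\Gamma_0(M)}.
\]
The top map is surjective by \cref{prop:RtoTsurj}, and the composite is identified with $\Phi_{\Gamma_0(M)}$ via \cref{lem:ZpDeltAlg}. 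Since $\Phi_{\Gamma_0(M)}$ is assumed to be an isomorphism, the first surjection is injective, hence an isomorphism. Then the second surjection is also an isomorphism, because the composite is bijective and the first map is bijective.

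The one point needing care is that $\TT_{\Gamma_1(M)} \to \TT_{\Gamma_0(M)}$ really is the restriction map compatible with the Eisenstein maximal ideals. For $M=N^2$ this uses $\m_{\Gamma,0}$ at both levels. For $M=N$ it uses the anemic algebras at both levels, so restriction to $M_2(\Gamma_0(M)) \subset M_2(\Gamma_1(M))$ is well defined after localization. This is already recorded in \cref{rem:Heckealgobservations}, so I expect no genuine obstacle; the argument is formal once commutativity is verified on generators.
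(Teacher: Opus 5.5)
Your proposal is correct and follows essentially the same route as the paper. Both arguments factor $\Phi_{\Gamma_0(M)}$ as the chain of surjections $R_{\Gamma_1(M)}/\II R_{\Gamma_1(M)} \twoheadrightarrow \TT_{\Gamma_1(M)}/\II\TT_{\Gamma_1(M)} \twoheadrightarrow \TT_{\Gamma_0(M)}$ and conclude that every map in it is an isomorphism. The only cosmetic difference is how you show that $\II$ dies in $\TT_{\Gamma_0(M)}$: you use triviality of the diamond operators at level $\Gamma_0(M)$, while the paper cites $\det(D_{\Gamma_0(M)}) = \varepsilon$, and these amount to the same fact.
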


\begin{proof}
    Note that $\II$ acts trivially on $\TT_{\Gamma_0(M)}$ since $\det(D_{\Gamma_0(M)}) = \varepsilon$, so there is a natural surjection from  $\TT_{\Gamma_1(M)}/\II \TT_{\Gamma_1(M)}$ to $\TT_{\Gamma_0(M)}$.  Combining this with \cref{lem:ZpDeltAlg} and the fact that $\Phi_{\Gamma_1(M)}$ is surjective gives a sequence of surjections
    \[
    R_{\Gamma_0(M)} \cong R_{\Gamma_1(M)}/\II R_{\Gamma_1(M)} \twoheadrightarrow \TT_{\Gamma_1(M)}/\II \TT_{\Gamma_1(M)} \twoheadrightarrow \TT_{\Gamma_0(M)}.
    \]
    This composition is $\Phi_{\Gamma_0(M)}$, which is an isomorphism by assumption.  Thus all of the morphisms are isomorphisms.
\end{proof}

\section{The \texorpdfstring{$\Z_p[\Delta]^+$}{}-module structure}\label{sec:modrepthy}

A key observation for our main results, established in \cref{subsec:ZpDeltaPlus}, is that $R_{\Gamma_0(N^2)}$ (and hence $\TT_{\Gamma_0(N^2)}$) is a $\Z_p[\Delta]^+$-algebra, where $+$ denotes the subring fixed under the involution that inverts group-like elements. In \cref{subsec:relationGamma0N2Gamma1N}, we  prove \cref{thmA:modularity} and \cref{prop:freenessequiv}, thereby relating freeness results and modularity theorems across levels $\Gamma_0(N^2)$ and $\Gamma_1(N)$. In \cref{subsec:R=Tfinalstep}, we prove \cref{thmA:R=TGamma0N2,thmA:Gamma1N2+rank} from the freeness result of \cite{LangPollackWake}, thus establishing modularity at levels $\Gamma_0(N^2)$ and $\Gamma_1(N^2)$.  Modularity for $\Gamma_1(N)$ follows from \cref{thmA:R=TGamma0N2} and the equivalence of \cref{thmA:modularity}.

The idea in  \cref{subsec:relationGamma0N2Gamma1N,subsec:R=Tfinalstep} is to apply the commutative algebra  \cref{lem:NAKReduction,lem:NAKpluscounting} to three situations:
\begin{itemize}
\item[(i)] the map $\varphi=\Phi_{\Gamma_0(N^2)}: R_{\Gamma_0(N^2)} \rightarrow \TT_{\Gamma_0(N^2)}$ with $A = \Z_p[\Delta]^+$, 
\item[(ii)] the map $\varphi=\Phi_{\Gamma_1(N)}: R_{\Gamma_1(N)} \rightarrow \TT_{\Gamma_1(N)}$ with $A = \Z_p[\Delta]$, and 
\item[(iii)] the map $\varphi=\Phi_{\Gamma_1(N^2)}: R_{\Gamma_1(N^2)} \rightarrow \TT_{\Gamma_1(N^2)}$ with $A = \Z_p[\Delta]$. 
\end{itemize}To verify the hypotheses of \cref{lem:NAKReduction,lem:NAKpluscounting} in each situation, one uses counting arguments for ranks of Hecke algebras from \cref{subsec:countingarg} and results  from \cref{subsec:redstep} to reduce to a situation where modularity is known. For levels $\Gamma_0(N^2)$ and $\Gamma_1(N)$, the rank counts are given by \cref{cor:countingranksequivalence}, while the reduction to the modularity theorem at level $\Gamma_0(N)$ \cite[Corollary 7.1.3]{WWE} is done using \cref{prop:isomodI,prop:modaugideal}, respectively. For level $\Gamma_1(N^2)$, \cref{prop:countingranksN2} gives the rank count, and the reduction to modularity theorem at level $\Gamma_0(N^2)$ (\cref{thmA:R=TGamma0N2}) is done with \cref{prop:isomodI}.


\subsection{The \texorpdfstring{$\Z_p[\Delta]^+$}{}-algebra structure on \texorpdfstring{$R_{\Gamma_0(N^2)}$}{}}\label{subsec:ZpDeltaPlus}
In this section we construct a natural $\Z_p$-algebra homomorphism $\Z_p[\Delta]^+ \to R_{\Gamma_0(N^2)}$.  The idea is to identify $\Z_p[\Delta]^+$ as an inertia-at-$N$ pseudodeformation ring, which then admits a map to $R_{\Gamma_0(N^2)}$ via restriction and universality.  This is nearly identical to \cite[\S 4.2]{LangWake25}, though there one assumes that $N \equiv -1 \bmod p$.

Let $\tilde{R}_N$ be the universal pseudodeformation ring of the trivial 2-dimensional pseudorepresentation $\psi(1 \oplus 1) \colon I_N \to \F_p$ parametrizing deformations with trivial determinant.  Any such pseudodeformation factors through the maximal pro-$p$ quotient of $I_N$, which we denote by $I_N^{(p)} \cong \Z_p$ \cite[Lemma 7.64]{Chenevier}.  Write $\tilde{D}_N \colon I_N^{(p)} \to \tilde{R}_N$ for the corresponding universal pseudorepresentation.  Fix a topological generator $\tau \in I_N^{(p)}$.  Recall that
\[
\Frob_N \tau \Frob_N^{-1} = \tau^N,
\]
and hence any pseudorepresentation of $I_N^{(p)}$ that extends to a pseudorepresentation on $G_{\Q_N}$ necessarily has the same trace on $\tau$ and $\tau^N$.  Define $R_N \coloneqq \tilde{R}_N/(\tr(\tilde{D}_N)(\tau) - \tr(\tilde{D}_N)(\tau^N))$.  Let $D_N \colon I_N^{(p)} \to R_N$ be the corresponding pseudorepresentation obtained by following $D_N$ with the natural quotient map $\tilde{R}_N \twoheadrightarrow R_N$.  Note that $\Delta$ is naturally a quotient of $I_N^{(p)}$ since it is the Galois group of the maximal $p$-power subextension of the totally ramified extension $\Q_N(\zeta_N)/\Q_N$.  Thus the choice of generator $\tau \in I_N^{(p)}$ determines a choice of generator for $\Delta$, which we denote by $\delta_{\tau}$.

\begin{proposition}\label{prop:ZpDeltaPlusStructure}
The $\Z_p$-algebra $R_N$ is isomorphic to $\Z_p[\Delta]^+$ via a map that identifies $\tr(D_N)(\tau) \in R_N$ with $[\delta_\tau] + [\delta_\tau^{-1}] \in \Z_p[\Delta]^+$.
\end{proposition}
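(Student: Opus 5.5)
First identify $\tilde{R}_N$ explicitly. Since $I_N^{(p)} \cong \Z_p$ is procyclic with generator $\tau$, a pseudodeformation $D$ of $\psi(1\oplus 1)$ with trivial determinant is determined by the single value $t = \tr(D)(\tau)$. Concretely, the pseudorepresentation axioms with $\det = 1$ force $\tr(D)(\tau^n)$ to satisfy the Chebyshev recursion $t_{n+1} = t t_n - t_{n-1}$, so $t_n$ is a universal polynomial in $t$. Continuity in the $p$-adic topology of $\tau^n$, together with $t \equiv 2 \bmod \m$, holds automatically for any choice of $t$ in the maximal ideal shifted by $2$. The plan is therefore to show that $\tilde{R}_N \cong \Z_p\lb x \rb$ with $x = \tr(\tilde{D}_N)(\tau) - 2$. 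Concretely, write $t = y + y^{-1}$ for the tautological character $y$ of $\Z_p\lb I_N^{(p)}\rb \cong \Z_p\lb T\rb$ (with $y = 1+T$). Then $\tilde{R}_N$ is the subring $\Z_p\lb T\rb^{+}$ fixed by $y \mapsto y^{-1}$, i.e. $\Z_p\lb y+y^{-1}-2\rb$, and the universal $\tilde D_N$ is $\psi(y \oplus y^{-1})$ restricted to invariants. This works because the trace of $\psi(y\oplus y^{-1})$ on $\tau^n$ is $y^n + y^{-n}$, which is symmetric, hence lies in $\Z_p\lb y+y^{-1}\rb$.

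Next, impose the relation $\tr(\tau) = \tr(\tau^N)$, i.e. $y + y^{-1} = y^N + y^{-N}$ in $\Z_p\lb T\rb^+$. Factor this as $(y^{N-1}-1)(y^{N+1}-1)y^{-N} = 0$, using the identity $y^N + y^{-N} - y - y^{-1} = y^{-N}(y^{N-1}-1)(y^{N+1}-1)$. Since $p \mid N-1$ and $p\nmid N+1$ (as $p$ is odd), $y^{N+1}-1$ has unit $\Z_p$-constant-part derivative factor. More precisely, $(y^{N+1}-1)/(y-1)$ is a unit in $\Z_p\lb T\rb$, as is $(y^{N-1}-1)/(y^{p^s}-1)$. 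So in $\Z_p\lb T\rb$ the relation generates the same ideal as $(y^{p^s}-1)(y-1)$. The key step is then to compute the invariant quotient. Take $\Z_p\lb T\rb^+/(\text{relation})$, and compare it with $(\Z_p\lb T\rb/(y^{p^s}-1))^+ = \Z_p[\Delta]^+$. The extra factor $(y-1)$ must be absorbed. Writing $u = y+y^{-1}-2 = (y-1)^2y^{-1}$, the symmetric element $F = y^{-N}(y^{N-1}-1)(y^{N+1}-1)$ equals $u \cdot v$, where $v = (y^{N-1}-1)(y^{N+1}-1)y^{1-N}/(y-1)^2$ is symmetric and equals a unit times $(y^{p^s}-1)/(y-1)$ times $(y^{p^s}-1)(y-1)y^{?}/(y-1)$… Rather than chase this, I would argue directly. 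Both $R_N = \Z_p\lb u\rb/(F)$ and $\Z_p[\Delta]^+$ are quotients of $\Z_p\lb u\rb$. By Weierstrass preparation, $F$ as a power series in $u$ has degree $\tfrac{1}{2}\deg_T$ of its distinguished part, which is $\tfrac{p^s+1}{2}$ (roots $y$ ranging over $p^s$-th roots of unity, paired up with $y=1$ counted once via $u$). This matches $\rk_{\Z_p}\Z_p[\Delta]^+ = \tfrac{p^s+1}{2}$.

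Finally, I would construct the map $\Z_p\lb u \rb \to \Z_p[\Delta]^+$ sending $u \mapsto [\delta_\tau]+[\delta_\tau^{-1}]-2$. It is surjective, since $\Z_p[\Delta]^+$ is spanned by the elements $[\delta_\tau^n]+[\delta_\tau^{-n}]$, which are Chebyshev polynomials in $[\delta_\tau]+[\delta_\tau^{-1}]$. It kills $F$, because $[\delta_\tau]^{N-1} = 1$. Hence it factors through $R_N$, and it is a surjection between free $\Z_p$-modules of the same rank $\tfrac{p^s+1}{2}$, so it is an isomorphism. The main obstacle is the rank computation of $\Z_p\lb u\rb/(F)$, namely showing that it is $\Z_p$-free of rank $(p^s+1)/2$. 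I would do this by checking that the roots of $F$ in $u$ are exactly the values $\zeta+\zeta^{-1}-2$ for $\zeta^{p^s}=1$, each occurring simply, so that $F$ is a unit times a distinguished polynomial of that degree. The simplicity at $\zeta=1$ is the delicate point: there the factor $(y-1)^2$ corresponds to a single $u$.
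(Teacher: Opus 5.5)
Your proposal is correct. It shares its first half with the paper's proof: the identification $\tilde R_N \cong \Z_p\lb u\rb$ with $u = y+y^{-1}-2$ (the paper's $x = (\lambda-1)^2/\lambda$), the factorization of the relation as $-y^{-N}(y^{N+1}-1)(y^{N-1}-1)$, and the unit observations coming from $p\nmid N+1$ and $p \nmid (N-1)/p^s$.

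Where you differ is in how you reach the isomorphism. The paper stays in the quadratic extension $\Z_p\lb \lambda-1\rb$ and computes the quotient there as $A=\Z_p\lb\lambda-1\rb/((\lambda-1)(\lambda^{p^s}-1))$. It then realizes $R_N$ as the subalgebra of $A$ generated by $x$, gets surjectivity onto $\Z_p[\Delta]^+$ by restricting $A\twoheadrightarrow\Z_p[\Delta]$, $\lambda\mapsto\delta_\tau$, and defers injectivity to \cite[Proposition 4.4]{LangWake25}.

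You instead map $\Z_p\lb u\rb\to\Z_p[\Delta]^+$ directly. The map is surjective by the Chebyshev identities and kills $F$ because $\delta_\tau^{N-1}=1$. You then prove injectivity by showing $\Z_p\lb u\rb/(F)$ is $\Z_p$-free of rank $(p^s+1)/2$, which makes the argument self-contained.

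That rank count is simpler than your root-by-root analysis suggests. Since $u=T^2/(1+T)$, the image in $\F_p\lb T\rb$ of any nonzero element of $\F_p\lb u\rb$ has $T$-adic order exactly twice its $u$-adic order. Modulo $p$, the $T$-adic order of $F$ is $1+p^s$:
\begin{itemize}
\item the factor $y^{N+1}-1$ contributes $1$, since $N+1\equiv 2 \bmod p$;
\item the factor $y^{N-1}-1\equiv(1+T^{p^s})^{(N-1)/p^s}-1$ contributes $p^s$.
\end{itemize}
This gives, in one stroke, both that $F\not\equiv 0 \bmod p$ in $\Z_p\lb u\rb$ and that its Weierstrass degree in $u$ is $(p^s+1)/2$. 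The first point matters: a count of roots in the open unit disc alone would not rule out a power of $p$ dividing $F$, so you need it to conclude that $F$ is a unit times a distinguished polynomial.

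Your abandoned attempt to write $F=u\cdot v$ is not needed and can be dropped.
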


\begin{proof}
The proof is nearly identical to \cite[Proposition 4.4]{LangWake25} with minor tweak since the congruence condition on $N$ is different.  We summarize the strategy and point out the change but leave the reader to consult \cite{LangWake25} for the details.

First one shows that $\tilde{R}_N \cong \Z_p\lb x \rb$ via an isomorphism that identifies $\tr(\tilde{D}_N)(\tau)$ and $2 + x$. To compute the quotient by the ideal generated by $\tr(\tilde{D}_N)(\tau) - \tr(\tilde{D}_N)(\tau^N)$, one adjoins the roots of the characteristic polynomial of $\tilde{D}_N(\tau)$ to $\Z_p\lb x \rb$.  That is, we embed $\Z_p\lb x \rb$ into $\frac{\Z_p\lb x \rb[\lambda]}{\lambda^2 - (2 + x)\lambda + 1} \cong \Z_p\lb \lambda - 1 \rb$.  Note that the element $x$ corresponds to $\frac{(\lambda - 1)^2}{\lambda}$ in $\Z_p\lb \lambda - 1 \rb$.  Let $A$ be the quotient of $\Z_p\lb \lambda - 1 \rb$ by the ideal generated by 
\[
\tr(\tilde{D}_N)(\tau) - \tr(\tilde{D}_N)(\tau^N) = \lambda + \lambda^{-1} - \lambda^N - \lambda^{-N} = -\lambda^{-N}(\lambda^{N+1} - 1)(\lambda^{N-1} - 1).
\]

Since $p \nmid N+1$, we find that 
\[
\frac{\lambda^{N+1} - 1}{\lambda - 1}, \frac{\lambda^{N-1} - 1}{\lambda^{p^s} - 1} \in \Z_p\lb \lambda - 1 \rb^\times.
\]
(The roles of $N+1$ and $N-1$ are reversed in \cite[Proposition 4.4]{LangWake25}.) Thus 
\[
A = \Z_p\lb \lambda - 1 \rb/(\lambda + \lambda^{-1} - \lambda^N - \lambda^{-N})  = \Z_p\lb \lambda - 1 \rb/(\lambda - 1)(\lambda^{p^s} - 1),
\]
and $R_N$ is isomorphic to the $\Z_p$-subalgebra of $A$ generated by the image of $x$, namely $\frac{(\lambda - 1)^2}{\lambda}$.  The surjection $A \twoheadrightarrow \Z_p[\Delta]$ given by sending $\lambda$ to $\delta_\tau$ sends $\frac{(\lambda - 1)^2}{\lambda}$ to $[\delta_\tau] + [\delta_\tau^{-1}] - 2$.  Thus $R_N$ projects to $\Z_p[\Delta]^+$ under $A \twoheadrightarrow \Z_p[\Delta]$ since $\Z_p[\Delta]^+$ is the $\Z_p$-subalgebra generated by $[\delta_\tau] + [\delta_\tau^{-1}] - 2$.  That this  is an isomorphism follows as in \cite[Proposition 4.4]{LangWake25}.

Finally, the identification of $\tr(\tilde{D}_N)(\tau)$ with $2 + x$ becomes an identification of $\tr(D_N)(\tau)$ with $2 + \frac{(\lambda - 1)^2}{\lambda} = \lambda + \lambda^{-1}$ in $R_N \subseteq A$.  The quotient of $A$ given by sending $\lambda$ to $\delta_\tau$ then identifies $\lambda + \lambda^{-1}$ with $[\delta_\tau] + [\delta_\tau^{-1}] \in \Z_p[\Delta]^+$, as desired.
\end{proof}

\begin{corollary}\label{cor:ZpDeltaPlusAlg}
The ring $R_{\Gamma_0(N^2)}$ is a $\Z_p[\Delta]^+$-algebra.
\end{corollary}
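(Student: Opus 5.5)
The plan is to produce the $\Z_p[\Delta]^+$-algebra structure by restricting the universal pseudodeformation to inertia at $N$ and invoking the universality of $R_N$, which \cref{prop:ZpDeltaPlusStructure} identifies with $\Z_p[\Delta]^+$.

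First, consider the restriction $D_{\Gamma_0(N^2)}|_{I_N} \colon I_N \to R_{\Gamma_0(N^2)}$. Its reduction modulo the maximal ideal is $\overline{D}|_{I_N} = \psi(\omega \oplus 1)|_{I_N} = \psi(1\oplus 1)$, because $\omega$ is unramified at $N$. Its determinant is $\varepsilon|_{I_N}$, which is trivial because $\varepsilon$ is unramified at $N \neq p$; this is exactly where the fixed-determinant condition \ref{fixdet} defining $R_{\Gamma_0(N^2)}$ is used. Hence $D_{\Gamma_0(N^2)}|_{I_N}$ is a pseudodeformation of $\psi(1\oplus 1)$ with trivial determinant.

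To apply universality we need a continuous pseudorepresentation, valued in a complete local noetherian ring, that deforms $\psi(1\oplus1)$. As noted in the text (via \cite[Lemma 7.64]{Chenevier}), such a pseudorepresentation factors through $I_N^{(p)}$. The universal property of $\tilde{R}_N$ then gives a local $\Z_p$-algebra map $\tilde{R}_N \to R_{\Gamma_0(N^2)}$ under which $\tilde{D}_N$ pushes forward to $D_{\Gamma_0(N^2)}|_{I_N}$. If one prefers to work only with Artinian coefficients, the same map is obtained by passing to the limit over the quotients $R_{\Gamma_0(N^2)}/\m^n$.

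Next, check that this map kills $\tr(\tilde{D}_N)(\tau) - \tr(\tilde{D}_N)(\tau^N)$. The image of this element is $\tr(D_{\Gamma_0(N^2)})(\tau) - \tr(D_{\Gamma_0(N^2)})(\tau^N)$. Using the relation $\Frob_N \tau \Frob_N^{-1} = \tau^N$ in $G_{\Q_N}$, viewed inside $G_{\Q,S}$ via the fixed embedding, this equals $\tr(D)(\tau) - \tr(D)(\Frob_N\tau\Frob_N^{-1})$, which vanishes because $\tr(D)$ is central and so invariant under conjugation. The one point to be careful about is that the tame relation holds in the maximal pro-$p$ quotient of inertia, $\Frob_N$ acting on $I_N^{(p)}$ through the cyclotomic character. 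Since $D|_{I_N}$ factors through $I_N^{(p)}$, this is enough.

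The map therefore factors through $R_N$. Composing with the isomorphism $\Z_p[\Delta]^+ \cong R_N$ of \cref{prop:ZpDeltaPlusStructure} gives the structure map $\Z_p[\Delta]^+ \to R_{\Gamma_0(N^2)}$, which sends $[\delta_\tau]+[\delta_\tau^{-1}]$ to $\tr(D_{\Gamma_0(N^2)})(\tau)$.
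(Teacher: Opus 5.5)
Your proposal is correct and follows the same route as the paper. You restrict $D_{\Gamma_0(N^2)}$ to $I_N$, use that its determinant $\varepsilon$ is trivial on $I_N$, get a map from the inertia pseudodeformation ring by universality, and use the extension to $G_{\Q_N}$ (via $\Frob_N\tau\Frob_N^{-1}=\tau^N$ and centrality of the trace) to see that this map factors through $R_N\cong\Z_p[\Delta]^+$. You have simply spelled out the trace-relation check that the paper leaves implicit in the phrase that the deformation on $I_N$ extends to $G_{\Q_N}$.
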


\begin{proof}
The universal deformation $D = D_{\Gamma_0(N^2)}$ restricts to a deformation of the trivial pseudorepresentation on $I_N$ with trivial determinant.  This deformation on $I_N$ extends to one on $G_{\Q_N}$ since $D$ is defined on $G_{\Q, S}$.  Thus $D|_{I_N}$ gives rise to a $\Z_p$-algebra homomorphism $R_N \to R_{\Gamma_0(N^2)}$ by universality, which gives the desired structure by \cref{prop:ZpDeltaPlusStructure}.
\end{proof}

Combining \cref{cor:ZpDeltaPlusAlg} with the map $R_{\Gamma_0(N^2)} \to \TT_{\Gamma_0(N^2)}$ makes $\TT_{\Gamma_0(N^2)}$ into a $\Z_p[\Delta]^+$-algebra.  Note that the trivial $\Z_p$-valued pseudorepresentation gives rise to an algebra homomorphism $\Z_p[\Delta]^+ = R_N \to \Z_p$ by universality, which is the usual augmentation map.  Let $\II^+$ be its kernel, which is the augmentation ideal of $\Z_p[\Delta]^+$.

\begin{proposition}\label{prop:modaugideal}
The natural map $\Phi_{\Gamma_0(N^2)} \colon R_{\Gamma_0(N^2)} \to \TT_{\Gamma_0(N^2)}$ induces isomorphisms
\[
R_{\Gamma_0(N)} \cong R_{\Gamma_0(N^2)}/\II^+ R_{\Gamma_0(N^2)} \cong \TT_{\Gamma_0(N^2)}/\II^+\TT_{\Gamma_0(N^2)} \cong \TT_{\Gamma_0(N)}.
\]
\end{proposition}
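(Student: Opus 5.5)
The plan mirrors \cref{prop:isomodI}: build a chain of surjections whose composite is the isomorphism $\Phi_{\Gamma_0(N)}$ of \cite[Corollary 7.1.3]{WWE}, which forces every map in the chain to be an isomorphism. The chain is
\[
R_{\Gamma_0(N^2)}/\II^+ R_{\Gamma_0(N^2)} \twoheadrightarrow \TT_{\Gamma_0(N^2)}/\II^+\TT_{\Gamma_0(N^2)} \twoheadrightarrow \TT_{\Gamma_0(N)},
\]
together with an identification $R_{\Gamma_0(N^2)}/\II^+ R_{\Gamma_0(N^2)} \cong R_{\Gamma_0(N)}$.

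\textbf{Step 1: identify the source.} Show $R_{\Gamma_0(N^2)}/\II^+ R_{\Gamma_0(N^2)} \cong R_{\Gamma_0(N)}$. By \cref{prop:ZpDeltaPlusStructure} and \cref{cor:ZpDeltaPlusAlg}, $\II^+$ is generated by $[\delta_\tau]+[\delta_\tau^{-1}]-2$. This element maps to $\tr(D_{\Gamma_0(N^2)})(\tau)-2$ for the chosen generator $\tau$ of $I_N^{(p)}$. So a map $R_{\Gamma_0(N^2)}\to A$ kills $\II^+$ exactly when the corresponding $D$ satisfies $\tr(D)(\tau)=2$.

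One must then check that this is equivalent to condition \ref{uoS}, i.e.\ $D|_{I_N}=\psi(1\oplus 1)$. The restriction $D|_{I_N}$ factors through $I_N^{(p)}$, has trivial determinant (the determinant is $\varepsilon$, which is unramified at $N$), and is therefore classified by the map $\tilde R_N\cong\Z_p\lb x\rb\to A$. That map is determined by the single value $x\mapsto \tr(D)(\tau)-2$. Hence $\tr(D)(\tau)=2$ if and only if $D|_{I_N}$ is the trivial pseudorepresentation, the one pulled back from $x\mapsto 0$. The remaining conditions, \ref{ff} and \ref{fixdet}, are common to both rings. Thus the quotient prorepresents exactly the functor of $R_{\Gamma_0(N)}$, which gives Step 1.

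\textbf{Step 2: the Hecke-side surjection.} The surjection $\TT_{\Gamma_0(N^2)}\twoheadrightarrow\TT_{\Gamma_0(N)}$ is \cref{prop:mapfromGamma0N2toGamma0N}, which sends $T_\ell\mapsto T_\ell$. Its compatibility with the pseudorepresentations follows because both rings are topologically generated by the $T_\ell$ with $\ell\nmid Np$ (see the remark after \cref{prop:RtoTsurj}). Therefore the composite $R_{\Gamma_0(N^2)}\to\TT_{\Gamma_0(N^2)}\to\TT_{\Gamma_0(N)}$ and the composite $R_{\Gamma_0(N^2)}\twoheadrightarrow R_{\Gamma_0(N)}\xrightarrow{\Phi_{\Gamma_0(N)}}\TT_{\Gamma_0(N)}$ agree on the generators $\tr(D)(\Frob_\ell)$, and so agree everywhere.

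It follows that $\II^+$ maps to zero in $\TT_{\Gamma_0(N)}$, because it already dies in $R_{\Gamma_0(N)}$ by Step 1. Hence the map factors through $\TT_{\Gamma_0(N^2)}/\II^+$. Surjectivity of $\Phi_{\Gamma_0(N^2)}$ (\cref{prop:RtoTsurj}) gives the first surjection in the chain.

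\textbf{Step 3: conclude.} The full composite is $\Phi_{\Gamma_0(N)}$, which is an isomorphism by \cite{WWE}, so each surjection in the chain is injective and hence an isomorphism.

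\textbf{Main obstacle.} The delicate point is Step 1, specifically the claim that a trivial-determinant pseudodeformation of $\psi(1\oplus1)$ on $I_N^{(p)}\cong\Z_p$ is determined by its trace at the single generator $\tau$. I would get this from $\tilde R_N\cong\Z_p\lb x\rb$ as in the proof of \cref{prop:ZpDeltaPlusStructure}. I would also double-check that condition \ref{uoS} imposes nothing beyond this (for instance on the tame part of $I_N$), using the fact that any residually trivial pseudodeformation factors through $I_N^{(p)}$.
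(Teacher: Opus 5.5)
Your proposal is correct and follows essentially the same route as the paper. You identify $R_{\Gamma_0(N^2)}/\II^+ R_{\Gamma_0(N^2)}$ with $R_{\Gamma_0(N)}$ through the inertia-at-$N$ condition, build the chain of surjections, and conclude because the composite is the Wake--Wang-Erickson isomorphism $\Phi_{\Gamma_0(N)}$. The only small difference is in Step 2: the paper checks directly that $D_f|_{I_N}$ is trivial for each $f \in \mathcal{F}_{\Gamma_0(N)}$ (cusp forms are Steinberg at $N$, and $E_{2,N}$ is unramified at $N$), whereas you derive the vanishing of $\II^+$ in $\TT_{\Gamma_0(N)}$ from the factorization of $\Phi_{\Gamma_0(N)}$ through $R_{\Gamma_0(N)}$ together with agreement of the two composites on the $\tr(D)(\Frob_\ell)$, which also justifies the paper's unproved assertion that the composite is $\Phi_{\Gamma_0(N)}$.
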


\begin{proof}
Let $\alpha \colon R_{\Gamma_0(N^2)} \to A$ be a $\Z_p$-algebra homomorphism with corresponding deformation $D_\alpha \colon G_{\Q,S} \to A$.  Note that $\alpha$ factors through $R_{\Gamma_0(N^2)}/\II^+ R_{\Gamma_0(N^2)}$ if and only if $D_\alpha|_{I_N}$ is the trivial pseudorepresentation.  Thus we see that $R_{\Gamma_0(N^2)}/\II^+ R_{\Gamma_0(N^2)}$ is the quotient of $R_{\Gamma_0(N^2)}$ parametrizing pseudodeformations that are pseudotrivial when restricted to $I_N$, which is also the definition of $R_{\Gamma_0(N)}$.  Thus $R_{\Gamma_0(N)} \cong R_{\Gamma_0(N^2)}/\II^+ R_{\Gamma_0(N^2)}$.

The map $\TT_{\Gamma_0(N^2)} \to \TT_{\Gamma_0(N)}$ given in \cref{prop:mapfromGamma0N2toGamma0N} factors through $\TT_{\Gamma_0(N^2)}/\II^+\TT_{\Gamma_0(N^2)}$.  Indeed, we can view
\[
\TT_{\Gamma_0(N)} \subseteq \prod_{f \in \mathcal{F}_{\Gamma_0(N)}} \overline{\Z}_p.
\]
Thus it suffices to show that $D_f|_{I_N}$ is trivial for any $f \in \mathcal{F}_{\Gamma_0(N)}$.  This is true for cusp forms since they are Steinberg at $N$.  For $f = E_{2,N}$ this is true since its associated pseudorepresentation is $\psi(\omega \oplus 1)$, which is unramified at $N$.

Thus $\Phi_{\Gamma_0(N^2)}$ induces a series of surjections $R_{\Gamma_0(N)} \cong R_{\Gamma_0(N^2)}/\II^+ R_{\Gamma_0(N^2)} \twoheadrightarrow \TT_{\Gamma_0(N^2)}/\II^+\TT_{\Gamma_0(N^2)} \twoheadrightarrow \TT_{\Gamma_0(N)}$.
This map is $\Phi_{\Gamma_0(N)}$, which is an isomorphism \cite[Corollary 7.1.3]{WWE}.  Thus all of the maps are isomorphisms, as desired.
\end{proof}


\subsection{Relating freeness and modularity across levels \texorpdfstring{$\Gamma_0(N^2)$}{} and \texorpdfstring{$\Gamma_1(N)$}{}}\label{subsec:relationGamma0N2Gamma1N}

\begin{lemma}\label{lem:Deltaplusfreefromrank}
 The  $\Z_p[\Delta]^+$-module $\TT_{\Gamma_0(N^2)}$ is free of rank $r$ if and only if $\rk_{\Z_p} \TT_{\Gamma_0(N^2)}= \frac{(p^s+1)}{2}r$.
\end{lemma}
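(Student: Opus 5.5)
We apply \cref{lem:NAKpluscounting} with $A = \Z_p[\Delta]^+$, its augmentation $A \to \Z_p$ with kernel $\II^+$, and $T = \TT_{\Gamma_0(N^2)}$, made into an $A$-algebra via \cref{cor:ZpDeltaPlusAlg} and $\Phi_{\Gamma_0(N^2)}$.

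First we check the hypotheses of \cref{lem:NAKpluscounting}.

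\begin{itemize}
\item $A$ is a finite free local $\Z_p$-algebra. It is $\Z_p$-free because it is a saturated $\Z_p$-submodule of $\Z_p[\Delta]$: it is the kernel of $1-\iota$, where $\iota$ is the involution inverting group-like elements. A basis is $[1]$ together with $[\delta]+[\delta^{-1}]$ for $\delta$ running over representatives of the nontrivial pairs $\{\delta,\delta^{-1}\}$. Since $p$ is odd, $\delta \neq \delta^{-1}$ for $\delta \neq 1$, so there are $\frac{p^s-1}{2}$ such pairs and $\rk_{\Z_p} A = \frac{p^s+1}{2}$. Locality holds because $A$ is a subring of the local ring $\Z_p[\Delta]$, which is integral over $A$. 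Alternatively, $A \cong R_N$ by \cref{prop:ZpDeltaPlusStructure}, and $R_N$ is a quotient of the local ring $\tilde R_N \cong \Z_p\lb x\rb$.
\item $T$ is a finite local $A$-algebra. It is local by \cref{defn:TGamma}, and $\Z_p$-finite, hence $A$-finite.
\item $T$ is $\Z_p$-free, since it is a finite $\Z_p$-algebra embedded in $\prod_f \overline{\Z}_p$ and so is $\Z_p$-torsion free.
\item $T/\II^+T$ is $\Z_p$-free. By \cref{prop:modaugideal}, $T/\II^+T \cong \TT_{\Gamma_0(N)}$, which is $\Z_p$-free of rank $r$ by the same torsion-freeness argument.
\end{itemize}

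\cref{lem:NAKpluscounting} then says that $T$ is $A$-free if and only if $\rk_{\Z_p} T = \frac{p^s+1}{2}\cdot r$. When $T$ is free, its $A$-rank is $\rk_{\Z_p} T/\rk_{\Z_p} A = r$. Conversely, freeness of rank $r$ immediately gives the rank formula. Together these yield the stated equivalence.

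We expect no real obstacle. The only points requiring care are the rank computation for $\Z_p[\Delta]^+$ and checking that the augmentation used in \cref{prop:modaugideal} is the one whose kernel is $\II^+$. The latter holds by the discussion following \cref{cor:ZpDeltaPlusAlg}. The proof of \cref{lem:NAKpluscounting} as written refers to $\Z_p[\Delta]$-modules, but the argument works for any $A$: Nakayama gives a surjection $A^r \to T$, and both sides are $\Z_p$-free of the same rank.
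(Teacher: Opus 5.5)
Your proposal is correct and follows the paper's proof. Like the paper, you apply \cref{lem:NAKpluscounting} with $A=\Z_p[\Delta]^+$, use \cref{prop:modaugideal} to identify $\TT_{\Gamma_0(N^2)}/\II^+\TT_{\Gamma_0(N^2)}$ with the $\Z_p$-free rank-$r$ ring $\TT_{\Gamma_0(N)}$, and use $\rk_{\Z_p}\Z_p[\Delta]^+=\frac{p^s+1}{2}$; your extra checks (locality and $\Z_p$-freeness of $\Z_p[\Delta]^+$, and that the lemma's argument works for any such $A$ despite its stray reference to $\Z_p[\Delta]$) are accurate and fill in details the paper leaves implicit.
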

\begin{proof}
The forward implication is straightforward, so we focus on proving the reverse implication. Suppose $\rk_{\Z_p} \TT_{\Gamma_0(N^2)}= \frac{(p^s+1)}{2}r$. Recall that $\II^+$ denotes the kernel of the natural augmentation map $\Z_p[\Delta]^+ \rightarrow \Z_p$. By \cref{prop:modaugideal}, we have $\TT_{\Gamma_0(N^2)}/\II^+ \TT_{\Gamma_0(N^2)} \cong \TT_{\Gamma_0(N)}$. Thus, $\rk_{\Z_p} \TT_{\Gamma_0(N^2)}/\II^+ \TT_{\Gamma_0(N^2)}=r$. Consequently, we obtain the following equality:
\begin{equation}\label{eq:Deltaplusfreerankcomp}
\underbrace{\rk_{\Z_p} \TT_{\Gamma_0(N^2)}}_{\frac{(p^s+1)}{2}r}= \underbrace{\left(\rk_{\Z_p} \Z_p[\Delta]^+\right)}_{\frac{p^s+1}{2}} \underbrace{\left(\rk_{\Z_p} \TT_{\Gamma_0(N^2)}/\II^+ \TT_{\Gamma_0(N^2)}\right)}_{r}.
\end{equation}
The lemma now follows from \eqref{eq:Deltaplusfreerankcomp} and \cref{lem:NAKpluscounting} since $\TT_{\Gamma_0(N^2)}$ and $\TT_{\Gamma_0(N)}$ are free $\Z_p$-modules.
\end{proof}

\begin{lemma} \label{lem:Deltafreefromrank}
The $\Z_p[\Delta]$-module $\TT_{\Gamma_1(N)}$ is free  of rank $r$ if and only if $\rk_{\Z_p} \TT_{\Gamma_1(N)}=p^sr$.
\end{lemma}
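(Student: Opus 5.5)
The plan is to copy the proof of \cref{lem:Deltaplusfreefromrank}, with $\Z_p[\Delta]$ in place of $\Z_p[\Delta]^+$, the augmentation ideal $\II$ in place of $\II^+$, and level $\Gamma_1(N)$ in place of $\Gamma_0(N^2)$.

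The forward implication is immediate. If $\TT_{\Gamma_1(N)}$ is free of rank $r$ over $\Z_p[\Delta]$, then its $\Z_p$-rank is $r \cdot \rk_{\Z_p}\Z_p[\Delta] = r|\Delta| = p^s r$.

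For the converse, assume $\rk_{\Z_p}\TT_{\Gamma_1(N)} = p^sr$. I would apply \cref{lem:NAKpluscounting} with $A = \Z_p[\Delta]$ and $T = \TT_{\Gamma_1(N)}$.
\begin{itemize}
\item The ring $A$ is finite free over $\Z_p$, and it is local because $\Delta$ is a $p$-group.
\item The $\Z_p[\Delta]$-algebra structure on $T$ comes from \cref{lem:ZpDeltAlg} via $\Phi_{\Gamma_1(N)}$; by \cref{rem:diamondop} it is the action of the diamond operators.
\item $T$ is a finite local $A$-algebra, and it is $\Z_p$-free since it is a completion of a Hecke algebra acting faithfully on a lattice.
\end{itemize}

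The remaining hypothesis is that $T/\II T$ is $\Z_p$-free, and I also need its rank. Here I would use \cref{prop:isomodI} with $M = N$. Its hypothesis, that $\Phi_{\Gamma_0(N)}$ is an isomorphism, is \cite[Corollary 7.1.3]{WWE}. It gives $\TT_{\Gamma_1(N)}/\II\TT_{\Gamma_1(N)} \cong \TT_{\Gamma_0(N)}$, which is $\Z_p$-free of rank $r$.

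The rank identity $\rk_{\Z_p} T = (\rk_{\Z_p} A)(\rk_{\Z_p} T/\II T)$ then reads $p^s r = p^s \cdot r$, and \cref{lem:NAKpluscounting} gives that $T$ is $A$-free. Its rank is $\rk_{\Z_p}T/\II T = r$, so $T \cong \Z_p[\Delta]^r$.

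I do not expect a real obstacle. The only point needing care is identifying the $\Z_p[\Delta]$-structure used in \cref{prop:isomodI} with the one to which \cref{lem:NAKpluscounting} is applied; both are induced by $\Phi_{\Gamma_1(N)}$ from \cref{lem:ZpDeltAlg}, so they agree.
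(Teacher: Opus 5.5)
Your proposal is correct and follows the paper's proof: the paper also uses \cref{prop:isomodI} with $M = N$ together with \cite[Corollary 7.1.3]{WWE} to get $\TT_{\Gamma_1(N)}/\II\TT_{\Gamma_1(N)} \cong \TT_{\Gamma_0(N)}$, and then feeds the rank identity $p^s r = p^s \cdot r$ into \cref{lem:NAKpluscounting}. Your extra checks — that $\Z_p[\Delta]$ is local, that $\TT_{\Gamma_1(N)}$ is $\Z_p$-free, and that the two $\Z_p[\Delta]$-structures agree — are sound and only make explicit what the paper leaves implicit.
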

\begin{proof}
The forward implication is straightforward, so we focus on proving the reverse implication. Suppose $\rk_{\Z_p} \TT_{\Gamma_1(N)}=p^sr$. Recall $\II$ denotes the kernel of the natural augmentation map $\Z_p[\Delta] \rightarrow \Z_p$. By combining \cref{prop:isomodI} with the modularity theorem of Wake--Wang-Erickson \cite[Corollary 7.1.3]{WWE}, we have $\TT_{\Gamma_1(N)}/\II \TT_{\Gamma_1(N)}  \cong \TT_{\Gamma_0(N)}$. Thus, $\rk_{\Z_p} \TT_{\Gamma_1(N)}/\II \TT_{\Gamma_1(N)} = \rk_{\Z_p} \TT_{\Gamma_0(N)}= r$. Consequently, we obtain the following equality:
\begin{equation}\label{eq:Deltafreerankcomp}
\underbrace{\rk_{\Z_p} \TT_{\Gamma_1(N)}}_{p^sr}= \underbrace{\left(\rk_{\Z_p} \Z_p[\Delta]\right)}_{p^s} \underbrace{\left(\rk_{\Z_p} \TT_{\Gamma_1(N)}/\II \TT_{\Gamma_1(N)}\right)}_{r}.
\end{equation}
The lemma now follows from \eqref{eq:Deltafreerankcomp} and \cref{lem:NAKpluscounting} since $\TT_{\Gamma_1(N)}$ and $\TT_{\Gamma_0(N)}$ are free $\Z_p$-modules.
\end{proof}

We restate \cref{prop:freenessequiv} from \cref{sec:introduction} and provide its proof. 
\begin{proposition}
    The following statements are equivalent:
    \begin{enumerate}[label=(\roman*),leftmargin=*]
    \item\label{item:Deltaplusfreeness} the $\Z_p[\Delta]^+$-module $\TT_{\Gamma_0(N^2)}$ is free of rank $r$;
    \item\label{item:Deltafreeness} the $\Z_p[\Delta]$-module $\TT_{\Gamma_1(N)}$ is free of rank $r$.
    \end{enumerate}
\end{proposition}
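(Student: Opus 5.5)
The plan is to reduce both freeness statements to rank statements and then invoke the rank equivalence already established. By \cref{lem:Deltaplusfreefromrank}, statement \ref{item:Deltaplusfreeness} is equivalent to
\[
\rk_{\Z_p} \TT_{\Gamma_0(N^2)} = \tfrac{p^s+1}{2}\, r .
\]
By \cref{lem:Deltafreefromrank}, statement \ref{item:Deltafreeness} is equivalent to
\[
\rk_{\Z_p} \TT_{\Gamma_1(N)} = p^s r .
\]
\cref{cor:countingranksequivalence} asserts that these two rank equalities are equivalent. Chaining the three equivalences proves the proposition.

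Before chaining, I will check that each lemma's hypotheses are unconditional. \cref{lem:Deltaplusfreefromrank} relies on \cref{prop:modaugideal}, which uses only the modularity theorem of \cite{WWE} at level $\Gamma_0(N)$, together with $\Z_p$-freeness of the Hecke algebras. \cref{lem:Deltafreefromrank} relies on \cref{prop:isomodI} with $M=N$, whose hypothesis that $\Phi_{\Gamma_0(N)}$ is an isomorphism is again \cite[Corollary 7.1.3]{WWE}. In particular, neither step assumes \cref{thmA:R=TGamma0N2}, so the argument is not circular.

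Two routine facts feed into this. First, $\rk_{\Z_p}\Z_p[\Delta]^+ = \frac{p^s+1}{2}$, because the inversion involution on the basis $\{[\delta]\}$ fixes only the identity (since $p$ is odd) and pairs the remaining $p^s-1$ elements. Second, $\Z_p[\Delta]^+$ and $\Z_p[\Delta]$ are finite free local $\Z_p$-algebras, since $\Delta$ is a $p$-group; this is what \cref{lem:NAKpluscounting} requires. Both facts are already used implicitly in the two lemmas.

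The main point of care is verifying that the lemmas apply with no extra assumptions; there is no genuine obstacle, since the real content (the twisting bijections behind \cref{cor:countingranksequivalence}) has already been done.
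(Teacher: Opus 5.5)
Your proposal is correct and follows the paper's proof exactly: it turns each freeness statement into a rank equality via \cref{lem:Deltaplusfreefromrank} and \cref{lem:Deltafreefromrank}, then connects the two rank equalities via \cref{cor:countingranksequivalence}. Your non-circularity check is also accurate, since both lemmas depend only on the level-$\Gamma_0(N)$ modularity theorem of \cite{WWE} and not on \cref{thmA:R=TGamma0N2}.
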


\begin{proof}
By \cref{lem:Deltaplusfreefromrank} statement \ref{item:Deltaplusfreeness} is equivalent to $\rk_{\Z_p}\TT_{\Gamma_0(N^2)}=\frac{p^s+1}{2}r$. By \cref{cor:countingranksequivalence} this is equivalent to $\rk_{\Z_p} \TT_{\Gamma_1(N)}=p^sr$, which is in turn equivalent to statement \ref{item:Deltafreeness} by \cref{lem:Deltafreefromrank}.
\end{proof}

We restate \cref{thmA:modularity} from \cref{sec:introduction} and provide its proof. 
\begin{theorem}
The following statements are equivalent:
\begin{enumerate}[label=(\roman*),leftmargin=*]
    \item\label{thmitem:Gamma0N2+rank} $R_{\Gamma_0(N^2)} \cong \TT_{\Gamma_0(N^2)}$ and $\rk_{\Z_p} \TT_{\Gamma_0(N^2)} = \bigl(\frac{p^s + 1}{2}\bigr)r$;
    \item\label{thmitem:Gamma1N+rank}   $R_{\Gamma_1(N)} \cong \TT_{\Gamma_1(N)}$  and $\rk_{\Z_p} \TT_{\Gamma_1(N)} = p^sr$.
    \end{enumerate}
\end{theorem}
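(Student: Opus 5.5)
The plan is to prove both implications by passing through the two freeness statements and applying \cref{lem:NAKReduction}; the rank halves of (i) and (ii) are already equivalent by \cref{cor:countingranksequivalence}. So in each direction we may assume both rank formulas hold. By \cref{lem:Deltaplusfreefromrank} and \cref{lem:Deltafreefromrank}, this means $\TT_{\Gamma_0(N^2)}$ is free over $\Z_p[\Delta]^+$ and $\TT_{\Gamma_1(N)}$ is free over $\Z_p[\Delta]$. It then remains to show that, given both rank formulas, the two isomorphisms $\Phi_{\Gamma_0(N^2)}$ and $\Phi_{\Gamma_1(N)}$ each hold.

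For (ii) $\Rightarrow$ (i), the rank formula in (ii) yields the rank formula in (i) by \cref{cor:countingranksequivalence}. Hence $\TT_{\Gamma_0(N^2)}$ is free over $A=\Z_p[\Delta]^+$ by \cref{lem:Deltaplusfreefromrank}. We apply \cref{lem:NAKReduction} to $\varphi=\Phi_{\Gamma_0(N^2)}$ with $A=\Z_p[\Delta]^+$ and $\II=\II^+$:
\begin{itemize}
\item $A$ is a finite local $\Z_p$-algebra, hence compact;
\item $R_{\Gamma_0(N^2)}$ is an $A$-algebra by \cref{cor:ZpDeltaPlusAlg}, and $\TT_{\Gamma_0(N^2)}$ carries the induced structure, so $\varphi$ is $A$-linear;
\item $\varphi$ is surjective by \cref{prop:RtoTsurj};
\item $\varphi$ induces an isomorphism modulo $\II^+$ by \cref{prop:modaugideal}, which uses only \cite[Corollary 7.1.3]{WWE}.
\end{itemize}
So $\Phi_{\Gamma_0(N^2)}$ is an isomorphism. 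The isomorphism $R_{\Gamma_1(N)}\cong\TT_{\Gamma_1(N)}$ assumed in (ii) is not actually needed here.

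For (i) $\Rightarrow$ (ii), we argue symmetrically. \cref{cor:countingranksequivalence} converts the rank formula of (i) into that of (ii), and then \cref{lem:Deltafreefromrank} gives that $\TT_{\Gamma_1(N)}$ is free over $\Z_p[\Delta]$. We apply \cref{lem:NAKReduction} to $\varphi=\Phi_{\Gamma_1(N)}$ with $A=\Z_p[\Delta]$ and $\II$ its augmentation ideal:
\begin{itemize}
\item the $A$-algebra structure is given by \cref{lem:ZpDeltAlg}, and it is compatible with $\Phi$ by \cref{rem:diamondop};
\item surjectivity is \cref{prop:RtoTsurj};
\item the isomorphism modulo $\II$ is \cref{prop:isomodI} with $M=N$, whose hypothesis is exactly \cite[Corollary 7.1.3]{WWE}.
\end{itemize}
Hence $\Phi_{\Gamma_1(N)}$ is an isomorphism.

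The main point needing care is checking the hypotheses of \cref{lem:NAKReduction}: $R$ and $T$ must be local compact $A$-algebras and $\varphi$ must be an $A$-algebra map. For $\TT$, the $A$-structure is defined through $\Phi$ itself, so $A$-linearity is automatic. Compactness holds because $R_\Gamma$ is a complete local noetherian ring and $\TT_\Gamma$ is finite over $\Z_p$. The only other subtlety is that \cref{prop:isomodI} and \cref{prop:modaugideal} must rely on level $\Gamma_0(N)$ modularity alone. This is the case, so neither direction is circular. In fact the argument shows that each isomorphism follows from the rank statement alone, which makes the equivalence clean.
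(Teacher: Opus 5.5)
Your proposal is correct and is essentially the paper's proof. In each direction you transfer the rank formula via \cref{cor:countingranksequivalence}, get freeness from \cref{lem:Deltaplusfreefromrank} or \cref{lem:Deltafreefromrank}, and conclude with \cref{lem:NAKReduction}, using \cref{prop:modaugideal} or \cref{prop:isomodI} together with \cite[Corollary 7.1.3]{WWE}. As you observe, the paper's argument likewise uses only the rank formula from the hypothesis, never the assumed isomorphism.
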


\begin{proof}
To prove that \ref{thmitem:Gamma0N2+rank} implies \ref{thmitem:Gamma1N+rank}, combine the equation $\rk_{\Z_p} \TT_{\Gamma_0(N^2)}= \frac{(p^s+1)}{2}r$ of \ref{thmitem:Gamma0N2+rank} with \cref{cor:countingranksequivalence}, to get $\rk_{\Z_p} \TT_{\Gamma_1(N)}=p^sr$. This allows us to apply \cref{lem:Deltafreefromrank} to deduce that $\TT_{\Gamma_1(N)}$ is a free $\Z_p[\Delta]$-module of rank $r$. 
Combining \cref{prop:isomodI} with the modularity theorem in level $\Gamma_0(N)$ \cite[Corollary 7.1.3]{WWE}, we have  $R_{\Gamma_1(N)}/\II R_{\Gamma_1(N)}  \cong \TT_{\Gamma_1(N)}/\II \TT_{\Gamma_1(N)}$. Now \ref{thmitem:Gamma1N+rank} follows by applying \cref{lem:NAKReduction} with $A = \Z_p[\Delta]$ and $\varphi = \Phi_{\Gamma_1(N)} \colon R_{\Gamma_1(N)} \to \TT_{\Gamma_1(N)}$.

To see that \ref{thmitem:Gamma1N+rank} implies \ref{thmitem:Gamma0N2+rank}, combine $\rk_{\Z_p} \TT_{\Gamma_1(N)} = p^sr$ with \cref{cor:countingranksequivalence} to get $\rk_{\Z_p} \TT_{\Gamma_0(N^2)}= \frac{(p^s+1)}{2}r$. Thus we may apply \cref{lem:Deltaplusfreefromrank} to see that $\TT_{\Gamma_0(N^2)}$ is a free $\Z_p[\Delta]^+$-module of rank $r$. 
\cref{prop:modaugideal} implies $R_{\Gamma_0(N^2)}/\II^+ R_{\Gamma_0(N^2)}  \cong \TT_{\Gamma_0(N^2)}/\II^+ \TT_{\Gamma_0(N^2)}$. Now \ref{thmitem:Gamma0N2+rank} follows by applying \cref{lem:NAKReduction} with $A = \Z_p[\Delta]^+$ and $\varphi = \Phi_{\Gamma_0(N^2)}$.
\end{proof}

\subsection{Proving the modularity theorems from the freeness result at level \texorpdfstring{$\Gamma_0(N^2)$}{}}\label{subsec:R=Tfinalstep}
We complete the proofs of the modularity theorems stated in \cref{sec:introduction}. The crucial fact we need is the following freeness result of the first author with Pollack and Wake. 

\begin{theorem}\cite{LangPollackWake}\label{thm:TisFree}
The $\Z_p[\Delta]^+$-module $\TT_{\Gamma_0(N^2)}$ is free of rank $r$.
\end{theorem}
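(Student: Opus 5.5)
By \cref{prop:freenessequiv}, it is enough to prove either freeness statement. By \cref{lem:Deltaplusfreefromrank} and \cref{lem:Deltafreefromrank} combined with \cref{cor:countingranksequivalence}, both are equivalent to the single rank identity $\rk_{\Z_p}\TT_{\Gamma_1(N)} = p^s r$. I would therefore argue at level $\Gamma_1(N)$, where a genuine group action is available: the diamond operators give $\Z_p[\Delta]$ acting geometrically. This avoids working directly with the subtler $\Z_p[\Delta]^+$-structure on $\TT_{\Gamma_0(N^2)}$, whose automorphic incarnation via projective $\Z_p[\PGL_2(\F_N)]$-modules is what \cite{LangPollackWake} uses.

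\textbf{Step 1 (freeness of cohomology).} Let $X_\Delta(N)$ be the intermediate curve between $X_1(N)$ and $X_0(N)$ with $X_1(N)\to X_\Delta(N)$ Galois of group $\Delta$. Since $|\Delta|$ is a power of $p\geq 5$, $\Delta$ acts freely on the open curve $Y_1(N)$. Stabilizers at elliptic points have order $2$ or $3$, and $\Delta$ moves the relevant cusps freely. Hence the singular chain complex of $Y_1(N)$, or of a suitable Borel--Serre compactification, is a bounded complex of finite free $\Z_p[\Delta]$-modules. Localizing at $\m_{\Gamma_1(N)}$ and using that the Eisenstein-localized cohomology is concentrated in degree one, I would deduce that $H^1(Y_1(N);\Z_p)_{\m}$, or the appropriate mixed compactly supported version that sees the Eisenstein series, is a finite free $\Z_p[\Delta]$-module. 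The point is that $\Z_p[\Delta]$ is local, so a perfect complex concentrated in one degree has a free cohomology module there.

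\textbf{Step 2 (from cohomology to the Hecke algebra).} Reduce modulo $\II$. Since $\Delta$ acts freely, $H^1(Y_1(N))_\m/\II$ should identify with the corresponding Eisenstein-localized cohomology of $Y_\Delta(N)$, which is the same as that of $Y_0(N)$ at this localization, up to the usual cuspidal corrections. By Mazur and \cite{WWE}, that module is free of rank $2$ over $\TT_{\Gamma_0(N)}$ in the appropriate mixed variant, or has a rank-one ``$+$'' piece. I would then use a Gorenstein/multiplicity-one input for $\TT_{\Gamma_0(N)}$ to show that a suitable $\pm$-eigenpiece $H^\pm$ is free of rank one over $\TT_{\Gamma_1(N)}$. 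Nakayama over the local ring $\TT_{\Gamma_1(N)}$ would give this, using $\TT_{\Gamma_1(N)}/\II \cong \TT_{\Gamma_0(N)}$ from \cref{prop:isomodI}. It follows that $\TT_{\Gamma_1(N)} \cong H^\pm$ is $\Z_p[\Delta]$-free, of rank $\rk_{\Z_p}\TT_{\Gamma_0(N)} = r$ after reducing mod $\II$. \Cref{prop:freenessequiv} then transports this to the $\Z_p[\Delta]^+$-freeness of $\TT_{\Gamma_0(N^2)}$ of rank $r$.

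\textbf{Main obstacle.} I expect Step 2 to be the main difficulty. Freeness of the cohomology over $\Z_p[\Delta]$ is essentially topological, but transferring it to $\TT_{\Gamma_1(N)}$ requires a multiplicity-one statement at the Eisenstein maximal ideal, namely that $H^\pm_\m$ is cyclic over $\TT_{\Gamma_1(N)}$. I would first reduce this mod $\II$ to the known Gorenstein property of Mazur's $\TT_{\Gamma_0(N)}$. The delicate part is matching the boundary (cusp) contributions correctly on both sides, so that the reduction mod $\II$ really lands on the level-$\Gamma_0(N)$ module for which multiplicity one is known.
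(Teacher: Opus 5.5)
Your reduction to the single rank identity $\rk_{\Z_p}\TT_{\Gamma_1(N)}=p^sr$ is correct; it is exactly the route sketched in \cref{rem:LWWE}. Step~1, however, fails as stated. Eisenstein localization does not concentrate the cohomology of $Y_1(N)$ in degree one. The module $H^0(Y_1(N);\Z_p)=\Z_p$ carries $T_\ell\mapsto \ell+1$ and trivial diamond operators, which is precisely the eigensystem defining $\m_{\Gamma_1(N)}$, so it survives localization. The same holds for $H^2(X_1(N);\Z_p)$ and $H^2_c(Y_1(N);\Z_p)$ in the compact and compactly supported variants.

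Consequently $H^1(Y_1(N);\Z_p)_\m$ is not $\Z_p[\Delta]$-free. If the localized complex is perfect over $\Z_p[\Delta]$, as you propose, its Euler characteristic is divisible by $p^s$, which forces $\rk_{\Z_p}H^1(Y_1(N);\Z_p)_\m\equiv 1\pmod{p^s}$.

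The unspecified ``mixed'' variant does not rescue Step~2 either. Let $C_0,C_\infty$ be the cusps over $0$ and $\infty$. The natural variant concentrated in degree one is $H^1(X_1(N)\smallsetminus C_0,C_\infty;\Z_p)$, and it is indeed $\Z_p[\Delta]$-free. But its trivial-character boundary contributions cancel against $H^0$ and $H^2$, so the eigensystem of $E_{2,N}$ does not occur in it. Hence $\TT_{\Gamma_1(N)}$ does not act faithfully on its sign-parts, and they cannot be free of rank one over $\TT_{\Gamma_1(N)}$.

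The missing idea is to split off a complex-conjugation eigenspace first. Complex conjugation $c$ commutes with the diamond operators and fixes every cusp of $X_1(N)$. It acts trivially on $H^0$ and by $-1$ on $H^1$ of the boundary circles, hence on the Eisenstein classes. Since $p$ is odd, take a finite $\Delta\times\langle c\rangle$-equivariant cell structure on the Borel--Serre compactification. The minus part of its cochain complex is a bounded complex of finite free $\Z_p[\Delta]$-modules whose only cohomology is $H^-:=H^1(Y_1(N);\Z_p)^-$, with no localization needed.

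Being a $\Z_p$-lattice of finite projective dimension over the local ring $\Z_p[\Delta]$, $H^-$ is free by Auslander--Buchsbaum. Concentration in one degree alone would not suffice without torsion-freeness. Its Hecke direct summand $H^-_\m$ is then free as well. Moreover $H^-_\m/\II H^-_\m\cong H^1(Y_0(N);\Z_p)^-_\m$, by passing through $Y_1(N)/\Delta$, whose map to $Y_0(N)$ has degree prime to $p$.

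Over $\overline{\Q}_p$ these minus parts realize each element of $\mathcal{F}_{\Gamma_1(N)}$, respectively $\mathcal{F}_{\Gamma_0(N)}$, exactly once. Freeness therefore gives $\#\mathcal{F}_{\Gamma_1(N)}=p^s\,\#\mathcal{F}_{\Gamma_0(N)}$, that is, $\rk_{\Z_p}\TT_{\Gamma_1(N)}=p^sr$. Then \cref{lem:Deltafreefromrank,cor:countingranksequivalence,lem:Deltaplusfreefromrank} give \cref{thm:TisFree}.

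In particular, the integral multiplicity-one statement you flag as the main obstacle is unnecessary, because \cref{lem:NAKpluscounting} already converts the rank count into freeness. Your Nakayama argument does go through for $H^-_\m$ given the rank-one statement at level $\Gamma_0(N)$, but it only adds $H^-_\m\cong\TT_{\Gamma_1(N)}$.

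Thus repaired, your argument is a cohomological form of the Lecouturier--Wake--Wang-Erickson count of \cref{rem:LWWE}. It works directly at level $N$, with no auxiliary $6$, since the $p$-group $\Delta$ already acts freely on $Y_1(N)$. The paper instead imports \cref{thm:TisFree} from \cite{LangPollackWake}, where it is proved with projective $\Z_p[\PGL_2(\F_N)]$-modules and local Langlands in families, techniques that extend beyond $\GL_2$ over $\Q$.
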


We now restate \cref{thmA:R=TGamma0N2} from \cref{sec:introduction} and provide its proof. 
\begin{theorem}\label{thm:RisomT}
We have  $R_{\Gamma_0(N^2)} \cong \TT_{\Gamma_0(N^2)}$  and  $\rk_{\Z_p} \TT_{\Gamma_0(N^2)} = \bigl(\frac{p^s + 1}{2}\bigr)r$.
\end{theorem}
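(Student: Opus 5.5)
The plan is to combine the freeness result \cref{thm:TisFree} with the two abstract lemmas of \cref{sec:commalglemmas}, applied to $A = \Z_p[\Delta]^+$ and the map $\varphi = \Phi_{\Gamma_0(N^2)} \colon R_{\Gamma_0(N^2)} \to \TT_{\Gamma_0(N^2)}$. Via \cref{cor:ZpDeltaPlusAlg}, both the source and the target of $\varphi$ are $\Z_p[\Delta]^+$-algebras, and $\varphi$ is $\Z_p[\Delta]^+$-linear. This linearity holds essentially by definition, since the $\Z_p[\Delta]^+$-structure on $\TT_{\Gamma_0(N^2)}$ is defined by pushing forward the one on $R_{\Gamma_0(N^2)}$ along $\Phi_{\Gamma_0(N^2)}$.

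First I check the hypotheses of \cref{lem:NAKReduction}.
\begin{itemize}
\item The ring $A = \Z_p[\Delta]^+$ is a finite free $\Z_p$-algebra. It is local, because $\Delta$ is a $p$-group and so $\Z_p[\Delta]$ is local, and $\Z_p[\Delta]^+$ is a subring over which $\Z_p[\Delta]$ is integral. Consequently $A$ is compact in its maximal-ideal-adic topology.
\item The augmentation $A \to \Z_p$ is the one coming from the trivial pseudorepresentation, and its kernel is $\II^+$.
\item The map $\varphi$ is surjective by \cref{prop:RtoTsurj}.
\item $R_{\Gamma_0(N^2)}$ and $\TT_{\Gamma_0(N^2)}$ are complete local noetherian rings, hence compact. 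In addition, $\TT_{\Gamma_0(N^2)}$ is finite over $\Z_p$.
\item $\TT_{\Gamma_0(N^2)}$ is free over $A$ by \cref{thm:TisFree}.
\item $\varphi$ induces an isomorphism $R_{\Gamma_0(N^2)}/\II^+ R_{\Gamma_0(N^2)} \cong \TT_{\Gamma_0(N^2)}/\II^+\TT_{\Gamma_0(N^2)}$ by \cref{prop:modaugideal}. That proposition in turn rests on the level-$\Gamma_0(N)$ modularity theorem of \cite[Corollary 7.1.3]{WWE}.
\end{itemize}
With these in place, \cref{lem:NAKReduction} gives $R_{\Gamma_0(N^2)} \cong \TT_{\Gamma_0(N^2)}$.

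For the rank statement, I apply the forward (easy) direction of \cref{lem:Deltaplusfreefromrank}. A free $\Z_p[\Delta]^+$-module of rank $r$ has $\Z_p$-rank $r \cdot \rk_{\Z_p}\Z_p[\Delta]^+$. The ring $\Z_p[\Delta]^+$ has $\Z_p$-basis $1$ together with $[\delta]+[\delta^{-1}]$ for $\delta$ ranging over representatives of the pairs $\{\delta,\delta^{-1}\}$ with $\delta \neq 1$. Since $|\Delta| = p^s$ is odd, there are $(p^s-1)/2$ such pairs, so $\rk_{\Z_p}\Z_p[\Delta]^+ = (p^s+1)/2$. Hence $\rk_{\Z_p}\TT_{\Gamma_0(N^2)} = \bigl(\frac{p^s+1}{2}\bigr)r$.

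The only point requiring care is confirming that the $\Z_p[\Delta]^+$-structure on $\TT_{\Gamma_0(N^2)}$ for which \cref{thm:TisFree} asserts freeness is the Galois action used here. In \cite{LangPollackWake} freeness is proved for the automorphic action, and the compatibility of the automorphic and Galois actions is also established there, as noted in the introduction. I would cite that compatibility explicitly. All remaining steps are formal consequences of results already proved in the paper.
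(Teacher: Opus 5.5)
Your proposal is correct and is the paper's proof: the rank formula comes directly from the freeness in \cref{thm:TisFree}, and $R_{\Gamma_0(N^2)} \cong \TT_{\Gamma_0(N^2)}$ comes from \cref{lem:NAKReduction} with $A = \Z_p[\Delta]^+$, using \cref{thm:TisFree} for freeness and \cref{prop:modaugideal} for the isomorphism modulo $\II^+$. Your additional checks, namely locality and compactness of $\Z_p[\Delta]^+$ and the compatibility of the Galois and automorphic actions from \cite{LangPollackWake}, go beyond what the paper writes but are consistent with it.
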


\begin{proof}
The  assertion $\rk_{\Z_p} \TT_{\Gamma_0(N^2)}= (\frac{p^s + 1}{2})r$ follows from \cref{thm:TisFree}. To prove that $R_{\Gamma_0(N^2)} \cong \TT_{\Gamma_0(N^2)}$, apply \cref{lem:NAKReduction} with $A = \Z_p[\Delta]^+$ and $\varphi = \Phi_{\Gamma_0(N^2)} \colon R_{\Gamma_0(N^2)} \to \TT_{\Gamma_0(N^2)}$.  \cref{thm:TisFree} shows that $\TT$ is a free $\Z_p[\Delta]^+$-module of rank $r$, and \cref{prop:modaugideal} shows that $\Phi_{\Gamma_0(N^2)}$ induces an isomorphism modulo the augmentation ideal $\II^+$.
\end{proof}

We now restate \cref{thmA:Gamma1N2+rank} from \cref{sec:introduction} and provide its proof.

\begin{theorem} 
 We have $R_{\Gamma_1(N^2)} \cong \TT_{\Gamma_1(N^2)}$  and $\rk_{\Z_p} \TT_{\Gamma_1(N^2)} = \bigl(\frac{p^s(p^s + 1)}{2}\bigr)r$.
\end{theorem}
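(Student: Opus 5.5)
The plan is to apply \cref{lem:NAKReduction} and \cref{lem:NAKpluscounting} with $A = \Z_p[\Delta]$, $\II$ its augmentation ideal, and $\varphi = \Phi_{\Gamma_1(N^2)} \colon R_{\Gamma_1(N^2)} \to \TT_{\Gamma_1(N^2)}$. By \cref{lem:ZpDeltAlg} with $M = N^2$, $R_{\Gamma_1(N^2)}$ is a $\Z_p[\Delta]$-algebra, and $\TT_{\Gamma_1(N^2)}$ becomes one through $\Phi_{\Gamma_1(N^2)}$ (by \cref{rem:diamondop} this is the diamond-operator action). So $\Phi_{\Gamma_1(N^2)}$ is a surjective morphism of local compact $\Z_p[\Delta]$-algebras, using \cref{prop:RtoTsurj} for surjectivity. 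Here $\Z_p[\Delta]$ is a finite free local $\Z_p$-algebra of rank $p^s$: it is local because $\Delta$ is a $p$-group.

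First, I would get the rank. By \cref{thm:RisomT}, $\rk_{\Z_p}\TT_{\Gamma_0(N^2)} = \frac{p^s+1}{2}r$. Then \cref{prop:countingranksN2} gives
\[
\rk_{\Z_p}\TT_{\Gamma_1(N^2)} = p^s\cdot \tfrac{p^s+1}{2}\, r,
\]
which is the rank assertion of the statement.

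Next, I would reduce modulo the augmentation ideal. Since \cref{thm:RisomT} says $\Phi_{\Gamma_0(N^2)}$ is an isomorphism, \cref{prop:isomodI} with $M = N^2$ gives
\[
R_{\Gamma_1(N^2)}/\II R_{\Gamma_1(N^2)} \cong \TT_{\Gamma_1(N^2)}/\II\TT_{\Gamma_1(N^2)} \cong \TT_{\Gamma_0(N^2)},
\]
and these isomorphisms are induced by $\Phi_{\Gamma_1(N^2)}$. In particular $\TT_{\Gamma_1(N^2)}/\II\TT_{\Gamma_1(N^2)}$ is $\Z_p$-free of rank $\frac{p^s+1}{2}r$. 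The Hecke algebra $\TT_{\Gamma_1(N^2)}$ is itself $\Z_p$-free, being a $p$-torsion-free finite $\Z_p$-algebra.

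Then I would establish freeness. The ranks from the two previous steps satisfy $\rk_{\Z_p}\TT_{\Gamma_1(N^2)} = (\rk_{\Z_p}\Z_p[\Delta])\cdot\rk_{\Z_p}(\TT_{\Gamma_1(N^2)}/\II\TT_{\Gamma_1(N^2)})$. Hence \cref{lem:NAKpluscounting} shows that $\TT_{\Gamma_1(N^2)}$ is $\Z_p[\Delta]$-free of rank $\frac{p^s+1}{2}r$.

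Finally, \cref{lem:NAKReduction} applies: $\TT_{\Gamma_1(N^2)}$ is $A$-free and $\Phi_{\Gamma_1(N^2)}$ is an isomorphism modulo $\II$. It therefore yields $R_{\Gamma_1(N^2)} \cong \TT_{\Gamma_1(N^2)}$.

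I expect no serious obstacle. The one point needing care is checking that the $\Z_p[\Delta]$-module structure on $\TT_{\Gamma_1(N^2)}$ used in the counting lemma is the one pushed forward from $R_{\Gamma_1(N^2)}$. This holds because the structure is defined via $\Phi_{\Gamma_1(N^2)}$, and it is precisely what makes the mod-$\II$ comparison in \cref{prop:isomodI} compatible with \cref{lem:NAKReduction}.
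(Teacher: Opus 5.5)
Your proposal is correct and follows the paper's proof essentially step for step. You get the rank from \cref{thm:RisomT} and \cref{prop:countingranksN2}, the isomorphism modulo $\II$ from \cref{prop:isomodI} with $M=N^2$, freeness from \cref{lem:NAKpluscounting}, and then conclude with \cref{lem:NAKReduction} for $A=\Z_p[\Delta]$.
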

\begin{proof}
The conclusion  $R_{\Gamma_0(N^2)} \cong \TT_{\Gamma_0(N^2)}$ of \cref{thm:RisomT} allows us to deduce that $\Phi_{\Gamma_1(N^2)}$ induces an isomorphism modulo the augmentation ideal $\II$ of $\Z_p[\Delta]$ from \cref{prop:isomodI}. Thus we have $\rk_{\Z_p}\left(\TT_{\Gamma_1(N^2)}/\II\TT_{\Gamma_1(N^2)}\right) = \rk_{\Z_p}\TT_{\Gamma_0(N^2)}=\left(\frac{p^s+1}{2}\right)r$. By \cref{prop:countingranksN2}, we have $\rk_{\Z_p} \TT_{\Gamma_1(N^2)}=p^s\left(\frac{p^s+1}{2}\right)r$. Consequently, we obtain the following equality:
\begin{equation}
\underbrace{\rk_{\Z_p} \TT_{\Gamma_1(N^2)}}_{p^s\left(\frac{p^s+1}{2}\right)r}= \underbrace{\left(\rk_{\Z_p} \Z_p[\Delta]\right)}_{p^s} \underbrace{\left(\rk_{\Z_p} \TT_{\Gamma_1(N^2)}/\II \TT_{\Gamma_1(N^2)}\right)}_{\frac{(p^s+1)}{2}r}.
\end{equation}
Since $\TT_{\Gamma_1(N^2)}$ and $\TT_{\Gamma_0(N^2)}$ are free $\Z_p$-modules, \cref{lem:NAKpluscounting} now lets us conclude that $\TT_{\Gamma_1(N^2)}$  is a free $\Z_p[\Delta]$-module. The theorem now follows by applying \cref{lem:NAKReduction} with $A = \Z_p[\Delta]$ with $\varphi = \Phi_{\Gamma_1(N^2)} \colon R_{\Gamma_1(N^2)} \to \TT_{\Gamma_1(N^2)}$. 
\end{proof}


\begin{remark}\label{rem:LWWE}

There is another approach, which we learned from unpublished work of Lecouturier--Wake--Wang-Erickson, that one could take to proving the freeness results of \cref{prop:freenessequiv} and the modularity theorems $R_\Gamma \cong \TT_\Gamma$ for all four levels $\Gamma$. It involves showing directly that $\rk_{\Z_p} \TT_{\Gamma_1(N)} = |\Delta|\rk_{\Z_p} \TT_{\Gamma_0(N)}$.  This can be done before localizing at the Eisenstein maximal ideal in the sense that one can show that $M_2(\Gamma_1(6N); \overline{\Q}_p)$ has dimension equal to the product of $|(\Z/6N\Z)^\times/\{\pm 1\}|$ with the dimension of $M_2(\Gamma_0(6N); \overline{\Q}_p)$.  The idea of Lecouturier--Wake--Wang-Erickson is that for $\Gamma = \Gamma_0(6N)$ or $\Gamma_1(6N)$, the dimension of $M_2(\Gamma; \overline{\Q}_p)$ is equal to $g(\Gamma) - 1 + e_\infty(\Gamma)$, where $g(\Gamma)$ is the genus of the modular curve $X(\Gamma)$ and $e_\infty(\Gamma)$ is the number of cusps on $X(\Gamma)$.  One checks that both $g - 1$ and $e_\infty$ are multiplicative in the degree, and $X_1(6N)$ is a $(\Z/6N\Z)^\times/\{\pm 1\}$-cover of $X_0(6N)$.  The result follows from this, and localizing at an appropriate Eisenstein maximal ideal can be used to get rid of the auxiliary 6 and conclude that $\rk_{\Z_p} \TT_{\Gamma_1(N)} = |\Delta|\rk_{\Z_p} \TT_{\Gamma_0(N)}$; this is the latter assertion of \cref{thmA:modularity}\ref{thmpart:Gamma1N+rank}. This counting result immediately implies that the $\Z_p[\Delta]$-module $\TT_{\Gamma_1(N)}$ is free of rank $r$ by \cref{lem:Deltafreefromrank}.  \cref{thmA:modularity}\ref{thmpart:Gamma1N+rank} follows  by applying \cref{lem:NAKReduction} with $A = \Z_p[\Delta]$ with $\varphi = \Phi_{\Gamma_1(N)} \colon R_{\Gamma_1(N)} \to \TT_{\Gamma_1(N)}$.  Using the equivalence given in \cref{thmA:modularity} leads us to \cref{thmA:R=TGamma0N2} proving $R_{\Gamma_0(N^2)} \cong \TT_{\Gamma_0(N^2)}$, from which one then obtains $R_{\Gamma_1(N^2)} \cong \TT_{\Gamma_1(N^2)}$ in \cref{thmA:Gamma1N2+rank}. It is easy to see that this argument no longer works when $p = 2, 3$.

\end{remark}

\section*{Acknowledgements}
This project started at the ``Pair of Automorphic Workshops" at the University of Oregon in 2022, supported by the National Science Foundation grant DMS-1751281 and the National Security Agency MSP conference grant H98230-21-1-0029.  The authors thank Ellen Eischen and the other organizers for facilitating the workshop.  They also thank Romyar Sharifi, who contributed to the early stages of this project through the aforementioned workshop.  The authors thank the following people for helpful conversations: Shaunak Deo, David Helm, Pedro Lemos, Robert Pollack, Preston Wake, and Carl Wang-Erickson.  Their thoughts have improved this paper. J.L.~acknowledges support from the National Science Foundation through the grant DMS-2301738 and from the Simons Foundation through the award MP-TSM-00002260. B.P.'s research is partially supported by the Infosys Young Investigator Award, from the Infosys Foundation Bangalore, the SERB-MATRICS grant MTR/2022/000244, and DST FIST program 2021 [TPN - 700661].

\bibliography{modularity}

\providecommand{\bysame}{\leavevmode\hbox to3em{\hrulefill}\thinspace}
\providecommand{\MR}{\relax\ifhmode\unskip\space\fi MR }
\providecommand{\MRhref}[2]{%
  \href{http://www.ams.org/mathscinet-getitem?mr=#1}{#2}
}
\providecommand{\href}[2]{#2}
\begin{thebibliography}{WWE20}

\bibitem[Car89]{Carayol89}
Henri Carayol, \emph{Sur les repr\'esentations galoisiennes modulo {$l$}
  attach\'ees aux formes modulaires}, Duke Math. J. \textbf{59} (1989), no.~3,
  785--801. \MR{1046750}

\bibitem[CE05]{CE2005}
Frank Calegari and Matthew Emerton, \emph{On the ramification of {H}ecke
  algebras at {E}isenstein primes}, Invent. Math. \textbf{160} (2005), no.~1,
  97--144. \MR{2129709}

\bibitem[Che14]{Chenevier}
Ga\"etan Chenevier, \emph{The {$p$}-adic analytic space of pseudocharacters of
  a profinite group and pseudorepresentations over arbitrary rings},
  Automorphic forms and {G}alois representations. {V}ol. 1, London Math. Soc.
  Lecture Note Ser., vol. 414, Cambridge Univ. Press, Cambridge, 2014,
  pp.~221--285. \MR{3444227}

\bibitem[DS05]{DiamondShurman}
Fred Diamond and Jerry Shurman, \emph{A first course in modular forms},
  Graduate Texts in Mathematics, vol. 228, Springer-Verlag, New York, 2005.
  \MR{2112196}

\bibitem[Lec21]{Lecouturier}
Emmanuel Lecouturier, \emph{Higher {E}isenstein elements, higher {E}ichler
  formulas and rank of {H}ecke algebras}, Invent. Math. \textbf{223} (2021),
  no.~2, 485--595. \MR{4209860}

\bibitem[LMP26]{LMP}
Jaclyn Lang, Katharina Müller, and Bharathwaj Palvannan, \emph{A new
  prespective on the rank of mazur's eisenstein ideal},
  \url{arxiv.org/pdf/2605.04195}, 2026, preprint.

\bibitem[LPW26]{LangPollackWake}
Jaclyn Lang, Robert Pollack, and Preston Wake, \emph{Counting level-raising
  congruences using modular representation theory}, 2026.

\bibitem[LW12]{LoefflerWeinstein}
David Loeffler and Jared Weinstein, \emph{On the computation of local
  components of a newform}, Math. Comp. \textbf{81} (2012), no.~278,
  1179--1200. \MR{2869056}

\bibitem[LW25]{LangWake25}
Jaclyn Lang and Preston Wake, \emph{The {E}isenstein ideal at prime-square
  level has constant rank}, Proc. Natl. Acad. Sci. USA \textbf{122} (2025),
  no.~28, Paper No. e2500729122.

\bibitem[Maz77]{Mazur}
B.~Mazur, \emph{Modular curves and the {E}isenstein ideal}, Inst. Hautes
  \'Etudes Sci. Publ. Math. (1977), no.~47, 33--186, With an appendix by Mazur
  and M. Rapoport. \MR{488287}

\bibitem[Mer96]{Merel}
Lo\"ic Merel, \emph{L'accouplement de {W}eil entre le sous-groupe de {S}himura
  et le sous-groupe cuspidal de {$J_0(p)$}}, J. Reine Angew. Math. \textbf{477}
  (1996), 71--115. \MR{1405312}

\bibitem[Miy06]{Miyakebook}
Toshitsune Miyake, \emph{Modular forms}, english ed., Springer Monographs in
  Mathematics, Springer-Verlag, Berlin, 2006, Translated from the 1976 Japanese
  original by Yoshitaka Maeda. \MR{2194815}

\bibitem[MW86]{MazurWiles86}
B.~Mazur and A.~Wiles, \emph{On {$p$}-adic analytic families of {G}alois
  representations}, Compositio Math. \textbf{59} (1986), no.~2, 231--264.
  \MR{860140}

\bibitem[NSW08]{Neukirch}
J\"{u}rgen Neukirch, Alexander Schmidt, and Kay Wingberg, \emph{Cohomology of
  number fields}, second ed., Grundlehren der Mathematischen Wissenschaften,
  vol. 323, Springer-Verlag, Berlin, 2008.

\bibitem[Shi71]{Shimura}
Goro Shimura, \emph{Introduction to the arithmetic theory of automorphic
  functions}, Kan\^o{} Memorial Lectures, vol. No. 1, Iwanami Shoten
  Publishers, Tokyo; Princeton University Press, Princeton, NJ, 1971,
  Publications of the Mathematical Society of Japan, No. 11. \MR{314766}

\bibitem[SW99]{SkinnerWiles99}
C.~M. Skinner and A.~J. Wiles, \emph{Residually reducible representations and
  modular forms}, Inst. Hautes \'Etudes Sci. Publ. Math. (1999), no.~89,
  5--126. \MR{1793414}

\bibitem[WWE19]{WWE2019}
Preston Wake and Carl Wang-Erickson, \emph{Deformation conditions for
  pseudorepresentations}, Forum Math. Sigma \textbf{7} (2019), Paper No. e20,
  44. \MR{3987305}

\bibitem[WWE20]{WWE}
\bysame, \emph{The rank of {M}azur's {E}isenstein ideal}, Duke Math. J.
  \textbf{169} (2020), no.~1, 31--115. \MR{4047548}

\end{thebibliography}
\bibliographystyle{amsalpha}

\end{document}